\definecolor{Red}{rgb}{1,0,0}
\newtheorem{theorem}{Theorem}[section]
\newtheorem{lemma}[theorem]{Lemma}
\newtheorem{corollary}[theorem]{Corollary}
\theoremstyle{definition}
\theoremstyle{remark}
\newtheorem{remark}[theorem]{Remark}
\makeatletter\@addtoreset{equation}{section}\makeatother
 \newfont{\bfit}{cmbxti10 scaled 1200}
\renewcommand{\d}{{\rm d}}
\newcommand{\cX}{{\mathcal X}}
\newcommand{\cY}{{\mathcal Y}}
 \newcommand{\e}{{\rm e} }
 \newcommand{\eps}{\varepsilon}
 \newcommand{\supp}{{\rm supp}}
 \newcommand{\R}{\mathbb{R}}
 \newcommand{\N}{\mathbb{N}}
 \newcommand{\Z}{\mathbb{Z}}
 \newcommand{\E}{\mathbb{E}}
 \renewcommand{\P}{\mathbb{P}}
 \def\1{{\mathchoice {1\mskip-4mu\mathrm l} 
{1\mskip-4mu\mathrm l}
{1\mskip-4.5mu\mathrm l} {1\mskip-5mu\mathrm l}}}
 \newcommand{\Mcal}{{\mathcal M}}
\newcommand{\heap}[2]{\genfrac{}{}{0pt}{}{#1}{#2}}
\newcommand{\ssup}[1] {{\scriptscriptstyle{({#1}})}}
\renewcommand{\subsection}{\secdef \subsct\sbsect}
\newcommand{\subsct}[2][default]{\refstepcounter{subsection}
\vspace{0.15cm}
{\flushleft\bf \arabic{section}.\arabic{subsection}~\bf #1  }
\nopagebreak\nopagebreak}
\newcommand{\sbsect}[1]{\vspace{0.1cm}\noindent
{\bf #1}\vspace{0.1cm}}
\begin{document}                        


\title{{{Identification of the Polaron measure I: Fixed coupling regime and the central limit theorem for large times}} }

\author{Chiranjib Mukherjee}{University of M\"unster}
\author{S.R.S.Varadhan}{Courant Institute of Mathematical Sciences}





\begin{abstract}
We consider the Fr\"ohlich model of the {\it{Polaron}} whose path integral formulation leads to the transformed path measure
  $$
  \widehat{\mathbb P}_{\alpha,T}(\d\omega)= Z_{\alpha,T}^{-1}\,\, \exp\bigg\{\frac{\alpha}{2}\int_{-T}^T\int_{-T}^T\frac{e^{-|t-s|}}{|\omega(t)-\omega(s)|} \, d s \, d t\bigg\}\,\mathbb P(\d\omega)
  $$
  with respect to $\mathbb P$ which governs the law of the increments of the three dimensional Brownian motion on a finite interval $[-T,T]$, and $ Z_{\alpha,T}$ is the partition function or the normalizing constant and $\alpha>0$ is a constant. The Polaron measure reflects a self attractive interaction. According to a conjecture of Pekar that was proved in \cite{DV83}
  $$
  g_0=\lim_{\alpha \to\infty}\frac{1}{\alpha^2}\bigg[\lim_{T\to\infty}\frac{\log Z_{\alpha,T}}{2T}\bigg]
  $$
  exists and has a variational formula. In this article we show that for any $\alpha>0$, the infinite-volume limit $\widehat{\mathbb P}_{\alpha}=\lim_{T\to\infty}\widehat{\mathbb P}_{\alpha,T}$ exists which is also identified explicitly. As a corollary, we deduce the central limit theorem (for any $\alpha>0$ and as $T\to\infty$) for the distribution of $\frac{\omega(T)-\omega(-T)}{\sqrt{2T}}$ both under the finite-volume Polaron measure $\widehat{\mathbb P}_{\alpha,T}$ and its infinite-volume counterpart $\widehat{\mathbb P}_\alpha$, and obtain an expression for the limiting variance.

\end{abstract}

\maketitle   






{{\section{Motivation and physical background of the Polaron.}}}

The  {\it{Polaron problem}}  in quantum mechanics is inspired by studying the slow movement 
of a charged particle, e.g. an electron, in a crystal whose lattice sites are polarized by this slow motion. The electron then drags around it a cloud of polarized lattice 
points which influences and determines the {\it{effective behavior}} of the electron. In particular, the electron behaves like one with a different mass.
For the physical background on this model, we refer to
the lectures by Feynman \cite{F72}.  {{Indeed, via his famous {\it path integral approach}, Feynman reduced the problem to studying the behavior of 
a three dimensional Brownian motion carrying a self-attractive interaction, which defines, in the usual Gibbs formulation, a transformed path measure weighted w.r.t. the law of Brownian paths. 
Since the nature of the self-interaction is translation-invariant in both time and space variables (see below),
no relevant information is lost by defining the same transformation  weighted w.r.t. the law of {\it Brownian increments}. This transformed path measure, or the
 {\it Polaron measure}, is the central object of interest in the present work, and our goal is to provide its {\it explicit description} in the infinite volume (i.e., large times) limit 
 and to analyze the behavior of the increments of the paths under this transformation. 
 From a physical point of view, according to Spohn (\cite{Sp87}), the long-time behavior of this path measure turns out to be crucial for a rigorous understanding of Polaron theory.
In order to put our present work into context, it therefore behooves us to allude to the quantum mechanical background of the Fr\"olich Polaron and briefly comment on 
its connections to the probabilistic questions we would like to address.}}

{{In the conventional set up, the Hamiltonian of the Fr\"ohlich Polaron is defined as the operator 
$$
H= \frac 12 p^2+ \int_{\R^3} \d k \,  a^\star(k) a(k) + \sqrt \alpha \int_{\R^3} \d k \frac{1}{|k|} \, \big[\e^{i k.x} a(k)+ \e^{-i k.x} a^\star(k)\big]
$$
which acts on a suitable Hilbert space $L^2(\R^3)\otimes \mathcal F$ with $\mathcal F$ being the Fock-space of the underlying {\it bosonic-field} interacting with the electron whose position and momentum are denoted by $x, p\in \R^3$, respectively. The bosonic field also carries the creation and annihilation operators $a^\star(k)$ and $a(k)$  which satisfy the commutation relation $[a(k),a^\star(k^\prime)]= \delta(k-k^\prime)$, while $\alpha>0$  stands for a dimensionless {\it coupling constant} which captures the strength of the interaction. Since the coupling between the electron and the bosonic-field  is translation-invariant, the total momentum $P=p+ P_f$ is conserved where $P_f=\int_{\R^3} \d k \, k \, a(k)a^\star(k)$ and 
a key object of interest is the so-called {\it energy momentum relation}, given by the bottom of the spectrum 
$$
E_\alpha(P)= \inf \,\mathrm{spec}(H_P)
$$
of the ``fiber Hamiltonian" $H_P=\frac 12 (P-P_f)^2+ \int_{\R^3} \d k \,  a^\star(k) a(k) + \sqrt \alpha \int_{\R^3} \d k \frac{1}{|k|} \, [\e^{i k.x} a(k)+ \e^{-i k.x} a^\star(k)]$.
It is known that $E(\cdot)$ is rotationally symmetric and is analytic when $P\approx 0$. Then the central objects of interest  are the {\it ground state energy}  
$$
g(\alpha)=- \min_P E_\alpha(P)
$$
 as well as the {\it effective mass} $m_{\mathrm{eff}}(\alpha)$ of the Polaron. The later quantity is defined as the inverse of the curvature:
 $$
 m_{\mathrm{eff}}(\alpha)= \bigg[\frac{\partial^2}{\partial P^2}E_\alpha(P)\big|_{P=0}\bigg]^{-1}.
 $$
 Physically relevant questions concern the {\it strong-coupling behavior} of these two objects. Indeed, the ground state energy in this regime was studied by Pekar (\cite{P49}) who also conjectured that 
that the limit $\lim_{\alpha\to\infty}\frac{g(\alpha)}{\alpha^2}=g_0>0$ exists (see below) and this was rigorously proved in \cite{DV83}. Indeed, Feynman's path-integral formulation leads to $g(\alpha)= \lim_{T\to\infty} \frac 1 T \log\langle \Psi | \e^{-TH}|\Psi\rangle$ with $\Psi$ being chosen such that its spectral resolution contains the ground state energy or low energy spectrum of $H$, but is otherwise arbitrary. Then the Feynman-Kac formula for the semigroup $\e^{-TH}$ implies that the last expression can be rewritten further as 
 \begin{equation}\label{g}
 g(\alpha)=\lim_{T\to\infty} \frac 1 T\log \E_0\bigg[\exp\bigg\{\alpha \int_0^T\int_0^T \d s \d t\,\, \frac{\e^{-{|t-s|}}}{|\omega(t)-\omega(s)|}\bigg\}\bigg]
 \end{equation}
 with $\E_0$ denoting expectation w.r.t. the law of a three-dimensional Brownian path starting at $0$. Starting with this expression, the authors in \cite{DV83-4} developed ``level-3" large deviation theory and  
 proved Pekar's conjecture in \cite{DV83} (see also Lieb and Thomas \cite{LT97} for quantitative bounds using functional analytic methods). However, the questions pertaining to the effective mass $m_{\mathrm{eff}}(\alpha)$ turned out to be much more difficult. Indeed it was Spohn (\cite{Sp87}) who, again using the path integral formulation, linked  the effective mass 
to the actual ``path behavior of the Polaron measure" -- a quantity much more subtle than its total mass. Note that in the usual Gibbs formulation, the exponential weight on the r.h.s. in \eqref{g} defines 
 a tilted measure on the path space of the Brownian motion, while its expectation provides its total mass. In 1987  Spohn (\cite{Sp87}) 
 conjectured that for any fixed coupling $\alpha>0$ and as $T\to\infty$,
 the distribution of the diffusively rescaled Brownian path under this Gibbs measure must be asymptotically Normal with zero mean and variance $\sigma^2(\alpha)>0$. Conditional on the validity of this conjecture, Spohn (\cite{Sp87}) then provided the relation
 $$
 m_{\mathrm{eff}}(\alpha)^{-1} =\sigma^2(\alpha).
 $$
 In this context, the goal of the present article is to prove Spohn's conjecture on the diffusive behavior of the Polaron measure. Actually, we will prove a stronger result
 that provides (in the limit $T\to\infty$) an explicit description of the Polaron measure itself as a mixture of Gaussian measures for any $\alpha>0$. As a corollary of this  result,  the aforementioned central limit theorem will also drop out, providing an explicit formula for the variance
 $\sigma^2(\alpha)$  or that of the effective mass $m_{\mathrm{eff}}(\alpha)=1/\sigma^{2}(\alpha)$. We may add that, using our central limit theorem, this relation $m_{\mathrm{eff}}(\alpha)=1/\sigma^{2}(\alpha)$ for the Fr\"ohlich polaron has been verified recently also in \cite[Theorem 3.1]{DS19}. We now turn to the mathematical layout of the Polaron measure and statements of our main results announced above. }}

{{\section{The Polaron measure and its large time asymptotic behavior.}}}

{{\subsection{The Polaron measure.} In the present context,  for any $\alpha>0$ and finite $T>0$ we define the Polaron measure $\widehat\P_{\alpha,T}$ by its density with respect to the  distribution of increments of Brownian motion,}} i.e., 
\begin{equation}\label{eq-Polaron-0}
\widehat\P_{\alpha,T}(\d \omega)= \frac 1 {Z_{\alpha,T}} \mathcal H_{\alpha,T}(\omega) \,\, \P(\d \omega),
\end{equation}
with
\begin{equation}\label{weight}
\mathcal H_{\alpha,T}(\omega)=\exp\bigg\{\frac{\alpha}2 \int_{-T}^T\int_{-T}^T  \frac{\e^{-|t-s|}}{|\omega(t)- \omega(s)|}\d t  \d s\bigg\}
\end{equation}
being the exponential weight.\footnote{The Polaron is sometimes written also in terms of a Kac interaction, where the weight in the exponential is given by
$\int_{-T}^T\int_{-T}^T  \frac{\eps\e^{-\eps|t-s|}}{|\omega(t)- \omega(s)|}\d t  \d s$. If we require $\eps=\alpha^{-2}$, 
this formulation turns out to be useful when studying the strong coupling limit of the Polaron, see Section \ref{sec-6}.}
Here $\P$  is Wiener Measure or three-dimensional Brownian motion, but it is defined only on the $\sigma$-field generated by the increments  $\omega(t)-\omega(s)$, $-\infty<s<t<\infty$. It can be restricted to any finite interval, in particular to $[-T,T]$, and restrictions to disjoint intervals being mutually independent. 
Also, $\alpha>0$ is the {\it{coupling parameter}} and $Z_{\alpha,T}=\E^{\P}[\mathcal H_{\alpha,T}]$ is the normalization constant or the {\it{partition function}}, which is  finite
for any $\alpha>0$ and $T>0$. 
{{As remarked earlier, the strong coupling behavior (i.e., $\alpha\to\infty$ after $T\to\infty$) of the logarithmic growth rate of  $Z_{\alpha,T}$ has been analyzed and Pekar's conjecture (\cite{P49}) was verified by Donsker and Varadhan (\cite{DV83}, see also \cite{LT97})) resulting in the following formula for the ground state limiting free energy: }}
\begin{equation}\label{eq-DV-0}
\begin{aligned}
\lim_{T\to\infty}\,\frac {1}{2 T}\log Z_{\alpha,T}= g(\alpha)= \sup_{\mathbb Q}\bigg[ \E^{\mathbb Q}\bigg\{\alpha\int_0^\infty \frac{\e^{-r}\,\d r}{|\omega(r)-\omega(0)|}\bigg\}- H(\mathbb Q)\bigg]\\
\end{aligned}
\end{equation}
and 
\begin{equation}\label{eq-DV}
\begin{aligned}
\lim_{\alpha\to\infty}\frac{g(\alpha)}{\alpha^2}=g_0=\sup_{\heap{\psi\in H^1(\R^3)}{\|\psi\|_2=1}} \,\bigg[\int\int_{\R^3\times \R^3} \frac{\psi^2(x)\,\psi^2(y)\,\d x \,\d y}{|x-y|} - \frac 12 \|\nabla\psi\|_2^2 \bigg].
\end{aligned}
\end{equation}
In \eqref{eq-DV-0}, the supremum is taken over all stationary processes $\mathbb Q$  taking values in $\R^3$, while $H(\mathbb Q)$ denotes the specific                                                                                                                                                                                                                                                                                                                                         entropy of the  process $\mathbb Q$ with respect to $\P$, while in the variational formula \eqref{eq-DV}, $H^1(\R^3)$ denotes the usual Sobolev space of square integrable functions with square integrable derivatives.
It is known (\cite{L76}) that the supremum appearing in  \eqref{eq-DV} is attained  at a function that is unique   modulo spatial translations. In other words, if $\mathfrak m$ denotes the  
set of maximizing densities, then
 \begin{equation}\label{eq-unique}
\mathfrak m=\{\psi_0^2 \star \delta_x\colon x\in \R^3\}
\end{equation}
for some $\psi_0\in H^1(\R^3)$ with $\|\psi_0\|_2=1$.

{{\subsection{The large-$T$ limit of the Polaron measure.}}} The limiting behavior of the actual path measures $\widehat\P_{\alpha,T}$ as $T\to\infty$ however has not been rigorously investigated.  We remark that, the interaction appearing in the expression for $\widehat\P_{\alpha,T}$ is {\it{self-attractve}}: 
The new measure favors paths that clump together on short time scales, i.e., the influential paths $\omega$ tend to make 
the distance $|\omega(t)-\omega(s)|$ smaller.  However, for any fixed coupling parameter $\alpha>0$, due to the presence of the 
damping factor $\e^{-|t-s|}$, one expects the interaction to stay localized as $T\to\infty$. Therefore, the following questions regarding the asymptotic behavior of the Polaron measure  arise  naturally and were 
posed in (\cite{Sp87}, Appendix 6): 
\begin{itemize}
\item Does the infinite volume Gibbs measure $\lim_{T\to\infty} \widehat\P_{\alpha,T}= {\widehat{\mathbb P}}_\alpha$ exist? Can we describe it explicitly? 
\item How mixing is $\widehat\P_\alpha$?
\item {Can we characterize the distribution 
\begin{equation}\label{eq-nu-hat}
\widehat{\nu}_{\alpha,T}=\widehat\P_{\alpha,T}\,\, \psi_T^{-1},\qquad\psi_T=\frac{1}{\sqrt{2T}}(\omega(T)-\omega(-T))
\end{equation}
of the rescaled increments $\psi_T$ under $\widehat\P_{\alpha,T}$ ?}
\item Does ${\widehat\nu}_{\alpha,T}$ converge, as $T\to\infty$, to a three dimensional centered Gaussian law $N(\mathbf 0, \sigma^2(\alpha)I)$ with variance $\sigma^2(\alpha) $? 
\item Is there an expression for  the variance $\sigma^2(\alpha)$? 
\end{itemize}
It is the goal of the present article to answer the above  questions. 

\bigskip
We  first show that, for any coupling parameter $\alpha>0$, the Polaron measure ${\widehat{\P}}_{\alpha,T}$ is a mixture of  Gaussian measures and can  therefore  be considered as a Gaussian process with a random covariance. The mixing measure ${\widehat \Theta}_{\alpha,T}$ depends on $T$ and $\alpha$, and  is explicit enough so that  we  can study its behavior as $T\to\infty$ for fixed $\alpha$. 
We show that for any $\alpha>0$, the mixing measure 
${\widehat \Theta}_{\alpha,T}$ has a limit  $\widehat{\Theta}_\alpha$  which can be described explicitly, and the limit possesses a regeneration property. This provides a useful and explicit description of the limiting  Polaron measure ${{ \widehat\P}}_\alpha$. The renewal  structure also implies mixing properties for ${{ \widehat\P}}_\alpha$.  Now, the rescaled distribution ${\widehat\nu}_{\alpha,T}$ defined in \eqref{eq-nu-hat}, which is also a mixture of spherically symmetric Gaussians,  is a Normal  distribution  with  covariance  $Z I$, where $Z \in [0,1]$ is random, and its distribution depends on $\alpha>0$ and $T>0$.  It turns out that as $T\to\infty$,  by the ergodic theorem implied by the renewal structure, one can show that  the distribution of $Z$  under ${\widehat\nu}_{\alpha,T}$ concentrates at  an explicit  constant $\sigma^2(\alpha)$ establishing the central limit  theorem for $\frac{1}{\sqrt 2T}(\omega(T)-\omega(-T))$. 

{{\subsection{Existing methods for analyzing one-dimensional Gibbs systems.}\label{sec-compare}
 Let us now underline the crucial difficulties one faces while analyzing 
the Polaron measure using existing methods. Here we have a one-dimensional system with the function $\mathcal H$ defined as a double integral carrying the interaction 
$c(t) V(x)$ which has long range time dependence $c(t)=\e^{-|t|}$
 with an additional singularity of $V(x)=\frac{1}{|x|}$ coming from the Coulomb force.
In general, when the coupling parameter is sufficiently small and the interaction potential is smooth and bounded, 
Gibbs states corresponding to one  dimensional systems are handled by proving uniqueness
of infinite volume Gibbs measures via the well-known Dobrushin method (\cite{D68},\cite{D70}). Then exploiting the mixing properties of the limiting Gibbs measure one proves the desired central limit theorem 
(see \cite{G06} which uses this method when the interaction $W$ is sufficiently smooth and bounded and when $\alpha>0$ is small enough).
However, the method (\cite{D68},\cite{D70}) relies strongly on such requirements, in particular it fails for  interactions that are unbounded and carry singularities like $V(x)=\frac 1 {|x|}$. We also refer to another result 
of interest (\cite{BS05}) for a model coming from an Ornstein-Uhlenbeck type interaction where the proof relies upon ``linearizing"
the interaction and invoking the techniques from \cite{KV86}. This linearization technique however depends crucially on the particular  type of interaction and excludes the singular Coulomb potential.

{{Alternatively, when the time correlation function $c(t-s)$ decays slowly and $V$ is bounded or when $c(\cdot)$ has compact support and $V(x)=1/|x|$, 
one can invoke a ``Markovianization technique" which was used in \cite{M17} in a different context, see Remark \ref{remark-Nelson}. Indeed, first assume that $c(\cdot)$ has compact support so that we can split the time interval $[-T,T]$ into $O(T)$ many subintervals $I_j$ of
constant length and in the double integral in $\mathcal H_{\alpha,T}$ only interactions between ``neighboring intervals" $I_j$ and $I_{j+1}$ survive, while 
the diagonal interactions (i.e., interactions coming from the same interval $I_j$) are absorbed in the product measure $\P$  corresponding to Brownian increments on disjoint intervals. 
Then we are led to the study of a ``tilted" Markov chain on the space of increments, and it turns out that, even if the underlying interaction potential $V(x)=1/|x|$ in $\R^3$ is chosen to be singular, 
the transformed Markov chain satisfies spectral gap estimates, which then lead to fast convergence of the transformed Markov chain to equilibrium resulting in the central limit theorem for any $\alpha>0$, see \cite{M17} for details. However, when the time correlation function $c(t-s)$ decays slowly, or already when it does not have compact support (i.e., interactions like $c(t-s)=\e^{-|t-s|}$), this technique works only for interactions $V$ that are bounded. A modification of the argument requires splitting the interval $[-T,T]$ into subintervals of length $L=L(T)$ with $(T/L)^2 c(L)\to 0$ as $T\to\infty$,
while the requisite spectral theoretic estimates for the tilted Markov chain now need to hold {\it{uniformly in $T$}} which works only if $V$ is bounded and fails for the singular case $V(x)=1/|x|$. }}}}

Therefore we are led to a new approach that explicitly describes the limiting Polaron measure and in the process, also proves the central limit theorem with an explicit formula for the variance.
We will now turn to a brief description of this approach. 

{{\subsection{An outline  of the present proof.}\label{sec-outline}}}
The first crucial step of our analysis is a representation of the Polaron measure $\widehat\P_{\alpha,T}$ for any $\alpha>0$ and $T>0$, 
as a mixture of Gaussian measures. 
Note that the Coulomb potential can be written as 
$$
 \frac1{|x|}=c_0\int_0^\infty \e^{-\frac 12u^2|x|^2} \, \d u
$$
where $c_0=\sqrt{\frac{2}{\pi}}$. Then with ${\widehat\P}_{\alpha,T}=\frac 1 {Z_{\alpha,T}} \mathcal H_{\alpha,T}(\omega)\d\P$ as in \eqref{eq-Polaron-0}, we can expand the exponential weight $\mathcal H_{\alpha,T}(\omega)$ into a power series and invoke the above representation of the Coulomb potential to get 
\begin{equation}\label{eq-Polaron-01}
\begin{aligned}
\mathcal H_{\alpha,T}&=\sum_{n=0}^\infty \frac{\alpha^n}{n!} \bigg[\int\int_{-T\leq s \leq t\leq T} \frac{\e^{-|t-s|}\,\d t \, \d s}{|\omega(t)-\omega(s)|}\bigg]^n \\
&= \sum_{n=0}^\infty \frac {1}{n!} \prod_{i=1}^n \bigg[\bigg(\int\int_{-T\leq s_i \leq t_i\leq T} \big(\alpha\,\e^{-(t_i-s_i)}\,\d s_i\, \d t_i\big)\bigg)\,\,  \bigg( c_0 \int_0^\infty \, \d u_i \e^{-\frac 12 u_i^2 |\omega(t_i)-\omega(s_i)|^2}\bigg)\bigg].
\end{aligned}
\end{equation}
Note that, when properly normalized, $\mathcal H_{\alpha,T}$ is a mixture of (negative) exponentials of positive definite quadratic forms. 
Also,  in the second display in \eqref{eq-Polaron-01}, we 
have a Poisson point process taking values on the space of finite intervals $[s,t]$ of $[-T,T]$  with intensity measure $\gamma_\alpha(\d s\,\d t)=\alpha \e^{-(t-s)}\d s\d t$ on $-T\le s<t\le T$. Then it turns out that, for any $\alpha>0$ and $T>0$, 
we have a representation 
\begin{equation}\label{eq-Polaron-03}
\widehat\P_{\alpha,T}(\cdot)=\int_{\widehat\cY} \mathbf P_{\hat\xi,\hat u} (\cdot) \,{\widehat  \Theta}_{\alpha,T}(\d\hat\xi\,\d \hat u).
\end{equation}
of the Polaron measure as a superposition of Gaussian measures $\mathbf P_{\hat\xi,\hat u}$ indexed by $(\hat\xi,\hat u)\in \widehat\cY$ with $\widehat{\mathcal Y}$ being the space
of all collections of (possibly overlapping) intervals $\widehat\xi=\{[s_1,t_1],\dots,[s_n,t_n]\}_{n\geq 0}$ and strings $\widehat u\in (0,\infty)^n$, while
the ``mixing measure" ${\widehat \Theta}_{\alpha,T}$ is a suitably defined probability measure on the space $\widehat\cY$.
The details of this Gaussian representation can be found in Theorem \ref{thm1} in Section \ref{sec-2}. As an immediate corollary, we obtain 
that for any fixed $\alpha>0$ and $T>0$, the variance of any linear functional on the space of increments with respect to $\widehat\P_{\alpha,T}$ is dominated by 
the variance of the same with respect to the restriction $\P_T$ of  $\P$ to $[-T,T]$, see Corollary \ref{cor1}.

Then the limiting behavior $\lim_{T\to\infty} \widehat\P_{\alpha,T}$ of the Polaron (and hence, the central limit theorem for the rescaled increment process) 
follows once we prove a law of large numbers for the mixing measure ${\widehat\Theta}_{\alpha,T}$. This measure is defined as a tilted probability measure w.r.t. the law of the aforementioned Poisson process
with intensity $\gamma_{\alpha,T}$. Note that, the union of any collection of intervals $\{[s_i,t_i]\}$, which is a typical 
realization of this Poisson process,
need not be connected. In fact, the union is a  disjoint union of connected intervals, with gaps in between, starting and ending with gaps $[-T,\min\{s_i\}]$ and $[\max\{t_i\},T]$.
It is useful to interpret this Poisson process as a birth-death process along with some extra information (with ``birth of a particle at time $s$ and  the 
same particle dying at time $t$")  that links each birth with the corresponding death.
The birth rate is $b_{\alpha,T}(s)=\alpha(1-\e^{-(T-s)})$ and the death rate is $d_{\alpha,T}(s)=[1-\e^{-(T-s)}]^{-1}$ 
which are computed from the intensity measure $\gamma_{\alpha,T}$. 
As $T\to\infty$, the birth and death rates converge to constant birth rate $\alpha>0$ and death rate $1$, and 
we imagine the infinite time interval $(-\infty, \infty)$ to be split into an alternating sequence 
of ``gaps" and ``clusters" of overlapping intervals. The gaps are called {\it{dormant periods}} (when no individual is alive and the population size is zero) and will be denoted by $\xi^\prime$, while 
each {\it{cluster}} or an {\it{active period}} is a collection $\xi=\{[s_i,t_i]\}_{i=1}^{n(\xi)}$ of overlapping intervals with the union $\mathcal J(\xi)=\cup_{i=1}^{n(\xi)} [s_i,t_i]$ being a connected interval without any gap. Note that, inception times of both dormant and active periods possess the {\it{regeneration property}}, i.e., all prior information is lost and there is a fresh start. Also, on any dormant period $\xi^\prime$, the aforementioned Gaussian measure $\mathbf P_{\xi^\prime,u}\equiv \P$ corresponds only to the law of Brownian increments, 
and independence of increments on disjoint intervals (i.e., alternating sequence of dormant and active periods) leads to a
``product structure"  for  the mixing measure ${\widehat\Theta}_{\alpha,T}$. Indeed, if $\Pi_\alpha$ denotes the law of the above birth death process in a single active period 
with constant birth rate $\alpha>0$ and death rate $1$,
then a crucial result which is proved in Theorem \ref{thm2.6},  shows that for any $\alpha>0$, and with $\lambda(\alpha)=g(\alpha)-\alpha$ (and with $g(\alpha)$ defined in \eqref{eq-DV-0}) 
{{\begin{equation}\label{q}
\begin{aligned}
&q(\alpha):= \E^{\Pi_{\alpha}\otimes\mu_\alpha} \bigg[\exp\{-\lambda(\alpha)[|\mathcal J(\xi)+|\xi^\prime|]\}\mathbf F(\xi)\bigg]=1,\\
& L(\alpha):= \E^{\Pi_{\alpha}\otimes\mu_\alpha} \bigg[ \big(|\mathcal J(\xi)+|\xi^\prime|\big)\exp\{-\lambda(\alpha) [|\mathcal J(\xi)+|\xi^\prime|]\}\mathbf F(\xi)\big]<\infty.
\end{aligned}
\end{equation}}}
where $\mu_\alpha$ is exponential distribution of parameter $\alpha$ and 
$$
\begin{aligned}
&\mathbf F(\xi)=\bigg(\sqrt{\frac 2\pi}\bigg)^{n(\xi)} \int_{(0,\infty)^{n(\xi)}}\Phi(\xi,\bar u)\,\,\d \bar u \end{aligned}
$$
and $\Phi(\xi,\bar u)= \E^\P[\exp\{-\frac 12 \sum_{i=1}^{n(\xi)} u_i^2 |\omega(t_i)-\omega(s_i)|^2\}]$ is the normalizing constant for the Gaussian measure $\mathbf P_{\xi,\bar u}$ in one active period $(\xi,\bar u)$.

The underlying renewal structure of the active and dormant periods imply that the mixing measure $\widehat\Theta_{\alpha,T}$ of the Polaron 
$\widehat\P_{\alpha,T}$ converges as $T\to\infty$ to the 
stationary version $\widehat{\mathbb Q}_\alpha$ on $\R$ obtained by alternating the limiting mixing measure on each active period $\xi$ defined as
$$
\begin{aligned}
&\widehat{\Pi}_\alpha(\d\xi\,\d \bar u)= \bigg(\frac {\alpha}{\lambda+\alpha}\bigg)\bigg[  \e^{-\lambda |{\mathcal J}(\xi)|}\,\,\bigg(\frac 2\pi\bigg)^{\frac {n(\xi)} 2}\,\big[\Phi(\xi, \bar u)\,\, \d \bar u\big]\bigg] \,\,\Pi_\alpha(\d\xi), \quad\mbox{where }\\
&\lambda=\lambda(\alpha)=g(\alpha)-\alpha.
\end{aligned}
$$ 
and  as the tilted exponential distribution 
$$
\widehat\mu_\alpha(\d\xi^\prime)=\bigg(\frac{\alpha+\lambda}\alpha\bigg)\,\, \e^{-\lambda|\xi^\prime|} \,\,\mu_\alpha(\d\xi^\prime) = \frac{g(\alpha)}{\alpha} \e^{-\lambda|\xi^\prime|} \,\,\mu_\alpha(\d\xi^\prime)$$
on each dormant period $\xi^\prime$ with expected waiting time  $(\lambda+\alpha)^{-1}$. 

Thus, given the Gaussian representation \eqref{eq-Polaron-03}, 
 the Polaron measure $\widehat\P_{\alpha,T}$ then converges  as $T\to\infty$, in total variation on finite intervals in $(-\infty,\infty)$,  to
$$
\widehat\P_\alpha(\cdot)=\int \mathbf P_{\hat\xi, \hat u}(\cdot)\, \,\widehat {\mathbb Q}_{\alpha}(\d\hat\xi\,\d\hat u),$$
where on the right hand side, $\mathbf P_{\hat\xi, \hat u}$ is the product of the Gaussian measures $\mathbf P_{\xi, \bar u}$ on the active intervals and law $\P$ of Brownian increments on dormant intervals, and 
the integral above is taken over the space of all active intervals (with $\bar u=(u_i)_{i=1}^{n(\xi)}$ and $u_i$'s being attached to each birth with the corresponding death) as well as dormant intervals. The central limit theorem
$$
\begin{aligned}
\lim_{T\to\infty} \widehat\P_{\alpha,T}\bigg[\frac {\omega(T)-\omega(-T)}{\sqrt{2T}} \in \cdot\bigg]&= \lim_{T\to\infty} \widehat\P_{\alpha}\bigg[\frac {\omega(T)-\omega(-T)}{\sqrt{2T}} \in \cdot\bigg] \\
&= \mathbf N (0, \sigma^2(\alpha) \mathbf I), \qquad\mbox{with  }\sigma^2(\alpha)\in (0,1),
\end{aligned}$$
for the rescaled increment process $(2T)^{-1 /2}\, [\omega(T)-\omega(-T)]$ under the finite-volume limit $\widehat\P_{\alpha,T}$ as well as that under its infinite-volume counterpart
$\widehat\P_\alpha$ as $T\to\infty$ with the same variance $\sigma^2(\alpha)$ also follow
 readily. It turns out that the variance in each dormant period $\xi^\prime$ is just the expected length $(\alpha+\lambda)^{-1}$ of the empty period, 
 and the resulting central limit 
 covariance matrix is $\sigma^2(\alpha) I$, where for any unit vector $v\in\R^3$ and any active period $\xi=[0,\sigma^\star]$, 
 $$
 \begin{aligned}
  \sigma^2(\alpha)&= \lim_{T\to\infty} \frac 1 {2T}\E^{\widehat{\P}_{\alpha,T}}\bigg[\big\langle v, \omega(T)-\omega(-T)\big\rangle^2\bigg]\\
&=   \frac{(\alpha+\lambda)^{-1}+ \E^{\widehat{\Pi}_\alpha}\big[\E^{\mathbf P_{\xi,\bar u}}[\langle v,\omega(\sigma^\star)-\omega(0)\rangle^2]\big]}{(\alpha+\lambda)^{-1}+  \E^{\widehat{\Pi}_\alpha}[\sigma^\star]}\\
&=  \frac{g(\alpha)^{-1}+ \E^{\widehat{\Pi}_\alpha}\big[\E^{\mathbf P_{\xi,\bar u}}[\langle v,\omega(\sigma^\star)-\omega(0)\rangle^2]\big]}{g(\alpha)^{-1}+  \E^{\widehat{\Pi}_\alpha}[\sigma^\star]}\in {{(0,1)}}.
 \end{aligned}
 $$
 The proofs of the limiting assertions $\lim_{T\to\infty} \widehat\P_{\alpha,T}$ and the central limit theorem are carried out in Section \ref{sec-4}. 
 
 {{We end this discussion with some relevant remarks.}}

{{\begin{remark}[Lower bound on the effective mass]\label{remark-mass}
 As remarked earlier, in \cite{Sp87} assuming that a CLT for the Polaron measure holds, the relation $m_{\mathrm{eff}}(\alpha)^{-1}=\sigma^2(\alpha)$ between the effective mass and the CLT variance $\sigma^2(\alpha)$ was provided (this relation has been recently 
 rigorously shown also in \cite[Theorem 3.1]{DS19} using the CLT proved in the current article for the Fr\"ohlich polaron). The attractive nature of the interaction in the Polaron measure is reflected in our estimate $\sigma^2(\alpha)\in (0,1)$ implying the strict bound $m_{\mathrm{eff}}(\alpha)\in (1,\infty)$ and underlining the increment of the mass of electron coupled with the bosonic field.
\end{remark}


\begin{remark}[The Polaron measure in strong coupling $\alpha\to\infty$ and the Pekar process]\label{remark-sec6}
In Section \ref{sec-6} we conclude with a discussion on the {\it strong coupling limit} of the limiting Polaron measure $\widehat{\mathbb P}_\alpha$ as $\alpha\to\infty$ and its connection with the increments of a stationary stochastic process, or the increments of the so-called {\it Pekar process}, which is determined uniquely by {\it any} solution $\psi$ of the Pekar variational formula $g_0$ defined in \eqref{eq-DV}. The detailed proofs can be found in our recent work \cite{MV18}.
 \end{remark}

\begin{remark}[Related models in quantum mechanics]\label{remark-Nelson}
The Fr\"ohlich Polaron considered in the present paper belongs to a large class of quantum mechanical models which capture the case of an electron interacting with a scalar bosonic field studied by Nelson (\cite{N64}) in the context of energy renormalization. To complete the picture we briefly comment on the state of the art of the available rigorous results pertaining to these models.  Mathematically, the scalar bosonic-field translates to an infinite-dimensional Ornstein-Uhlenbeck (OU) process $\{\varphi(x,t)\}_{x\in \R^d, t>0}$ with covariance structure 
\begin{equation}\label{eq0:Nelson}
\begin{aligned}
\int \varphi(x,t)\, \varphi(y,s) \, \mathbf P^{\mathrm{OU}}(\d\varphi)&=  \int_{\R^3} \d k\,\, |\widehat\rho(k)|^2\, \frac 1 {2\omega(k)} \, \e^{-\omega(k) |t-s|} \,\, \e^{\mathrm i k\cdot (x-y)} \\
&=: \mathcal W(t-s, x-y).
\end{aligned}
\end{equation}
Here $\widehat\rho$ denotes the Fourier transform of the mass distribution of the quantum particle, while $\omega$ stands for the Phonon dispersion relation. \footnote{In quantum mechanics it is customary to denote this dispersion relation by $\omega$. This is not to be confused with the sample path of the Brownian motion and to keep notation disjoint, the latter object 
is denoted by $x(\cdot)$ in Remark \ref{remark-Nelson}.}
 Now with a Hamiltonian $-\frac 12 \Delta+ \alpha \sqrt 2 \varphi(x,t)$, the Feynman-Kac formula 
leads to the path measure  
\begin{equation}\label{eq1:Nelson}
\frac 1 {Z_T} \, \exp\bigg\{- \alpha\sqrt 2\int_0^T \varphi\big(x(t),t)\big) \d t\bigg\} \mathbf P^{\mathrm{OU}}(\d\varphi)\otimes P(\d x) 
\end{equation}
where $P_0$ denotes the law of a Brownian path $x(\cdot)$. The exponent above is linear in $\varphi$ and integration w.r.t. the Gaussian measure $\mathbf P^{\mathrm{OU}}$, together with \eqref{eq1:Nelson} now leads to the Gibbs measure with an exponential weight $\exp\{\alpha\int_0^T\int_0^T \mathcal W(t-s,x(t)-x(s)) \d s \d t\}$ on the Wiener space. The case of the Fr\"ohlich Polaron corresponds to the case
$\omega\equiv 1$ and $\widehat\rho(k)=|k|^{-1}$ in \eqref{eq0:Nelson} and thus $\mathcal W(t,x)=\frac{\e^{-|t|}}{|x|}$. Another case of physical prominence is that of {\it massless Bosons} which requires the choice $\omega(k)=|k|$ and a radially symmetric $\widehat\rho$  with a fast decay at infinity with $\widehat\rho(0)=0$. This choice in \eqref{eq0:Nelson} leads to the interaction potential $\mathcal W(t,x)= \int_0^\infty \d r \, \widehat\rho(r) \, \e^{-r |t|} \frac{\sin(r|x|)}{|x|}$. Like the case of the Fr\"ohlich Polaron, a satisfactory analysis of Gibbs measures corresponding to such interactions also 
do not succumb to the aforementioned Dobrushin method. Developing 
the Markovian approach discussed in Section \ref{sec-compare}, it was shown in \cite{M17} that a CLT for the increment process holds for any coupling parameter $\alpha$ for 
long-range in time and bounded in space interactions satisfying $\sup_x |\mathcal W(t,x)| \leq \frac C{1+t^\gamma}$ for $\gamma>2$ (such interactions come naturally from the above assumptions from $\widehat\rho$, a special case of interest is $\mathcal W(t,x)=1/(1+|x|^2+t^\gamma)$ for $\gamma>2$) or for the short-range but singular interaction of the form $\mathcal W(t,x)=c(t)V(x)$ where $c$ has compact support and $V(x)=\delta_0(x)$ in $d=1$ or $V(x)=1/|x|^{p}$ for $p\in (0,\frac 2{d-2})$ in $d\geq 3$. However, the latter method does not seem to cover interactions which are long-range in time {\it and} unbounded in space like $\mathcal W(t,x)=\e^{-|t|}/{|x|}$ corresponding to the Fr\"ohlich Polaron analyzed in the present article.
\end{remark}
}}

\begin{remark}
Extending the method developed currently, functional CLT for a class of translation-invariant interactions of the form $\mathcal W(t-s,x(t)-x(s))$ as in Remark \ref{remark-Nelson} have been obtained in \cite{BP21}. The result on the CLT there requires (apart from the assumptions needed for existence of the infinite volume limit)
an additional hypothesis about quasi-concavity of $x\mapsto \mathcal W(\cdot,x)$ to apply Gaussian correlation inequalities.\footnote{These assumptions cover Fr\"ohlich polaron (unlike \cite{M17}), but are more restrictive than \cite[Assumption A]{M17} which does not need any concavity in the spatial component $x\mapsto \mathcal W(\cdot,x)$.} While the method there follows our current approach, the argument there (unlike our proof of Theorem \ref{thm2.6} in Section \ref{sec-thm2.6})
relies additionally on using known results from quantum mechanics for the Fr\"ohlich Polaron (e.g. existence of a ground state at zero total momentum, spectral gap etc.). 
 \end{remark}

\noindent{{\it Organization of the rest of the article:}}We now briefly comment on the organization of the rest of the article. Section \ref{sec-2} is devoted to the representation of the Polaron measure as a superposition of Gaussian measures w.r.t. a mixing measure,
while Section \ref{sec-3} is devoted to the estimates with respect to the mixing measure. The identification of the limiting Polaron measure as well as the  
central limit theorem for the increment process are carried out in Section \ref{sec-4}. In Section \ref{sec-6} we conclude with a brief discussion on the strong coupling limit regime of the Polaron measure $\widehat\P_\alpha$. In an Appendix (Section \ref{sec-appendix}) we have collected some estimates w.r.t. birth and death processes which are used only in the proof of the exponential mixing property of $\widehat\P_\alpha=\lim_{T\to\infty} \widehat\P_{\alpha,T}$ in Theorem \ref{thm4.5} (but neither in the proof of Theorem \ref{thm4} nor in that of Theorem \ref{thm5}).

\section{Polaron as a superposition of Gaussian measures}\label{sec-2}

We will denote by $\Omega  = C\big((-\infty,\infty);\R^3)$ the space of continuous functions $\omega$ taking values in $\R^3$. We will work with the probability space
$(\Omega,\mathcal F,\P)$, where $\mathcal F$ is the $\sigma$-algebra generated by the increments $\{\omega(t)-\omega(s)\}$, while $\P$ is the Gaussian measure 
governing the law of three dimensional Brownian increments over intervals in $(-\infty,\infty)$. 

For convenience, we will introduce the following notation which we will use in this section and the rest of the article. We will denote by $\cX_n$ the space of collections $\widehat\xi=\{[s_1,t_1],\dots,[s_n,t_n]\}$ of $n$ (possibly overlapping) intervals. We will write 
\begin{equation}\label{eq-X}
\widehat{\mathcal X}=\bigcup_{n=0}^\infty \mathcal X_n \quad\mbox{and}\,\,\, \widehat\cY=\bigcup_{n=0}^\infty \bigg(\mathcal X_n\otimes (0,\infty)^n\bigg).
\end{equation}
Typical elements of the space $\widehat\cX$ and $\widehat\cY$ will be denoted by $\widehat\xi\in \widehat\cX$ and $(\widehat\xi,\widehat u)\in \widehat{\mathcal Y}$, respectively.

\subsection{Quadratic forms on dual spaces and Gaussian measures.}
We will consider other centered Gaussian processes which are defined on the same $\sigma$-field $\mathcal F$ generated by increments, and 
these processes will be labeled  through  their quadratic forms defined as follows. Let $\Mcal_0$ be the space of compactly supported signed measures $\mu$ on the real line $\R$ 
 with total mass $\mu({\mathbb R})=0$. Then, $\mathcal Q(\mu)=\E^{\P}\big[(\int_\R \omega(s)\mu(\d s))^2\big]$ will define the quadratic form on $\Mcal_0$ 
 for one dimensional Brownian increments $\P$. i.e., with $F(s)=\mu((-\infty, s])$,
\begin{equation}\label{eq-Q-mu}
\mathcal Q(\mu)=\int_{-\infty}^\infty |F(s)|^2 \d s=\sup_{\omega(\cdot) }\bigg[ 2\int_\R \omega(s)\mu(\d s)-\int_\R [\omega^\prime(s)]^2\,\,\d s\bigg],
\end{equation}
with the supremum above being taken over absolutely continuous functions $\omega$ with square integrable derivatives.

Let $\widehat\xi=\{[s_1, t_1], [s_2,t_2],\dots,[s_n,t_n]\}\in \widehat\cX$ be a collection of $n$ possibly overlapping  intervals in $\R$. For any such $\widehat\xi$ and vector $\widehat u=(u_1,u_2,\ldots, u_n)\in (0,\infty)^n$, we can again define a quadratic form
\begin{equation}\label{eq-Q-xi-u}
\mathcal Q_{\hat\xi,\hat u} (\mu)=\sup_{\omega(\cdot) }\bigg[ 2\int \omega(s)\mu(\d s)-\int [\omega^\prime(s)]^2\,\,\d s-\sum_{i=1}^n u_i^2 [\omega(t_i)-\omega(s_i)]^2\bigg],
\end{equation}
 
Then the  corresponding Gaussian measure  will be denoted by $P_{\hat \xi,\hat u}$, i.e.,  
\begin{equation}\label{eq-P-xi-u}
\E^{P_{\hat \xi,\hat u}}\bigg[\bigg(\int_\R \omega(s)\mu(\d s)\bigg)^2\bigg]= \mathcal Q_{\hat \xi, \hat u} (\mu),
\end{equation}
and we can take three independent copies of $P_{\hat\xi,\hat u}$ to get a three dimensional version and we will  denote it $\mathbf P_{\hat\xi,\hat u}$. We then have a collection $\{\mathbf P_{\hat\xi,\hat u}\}_{(\hat\xi,\hat u)\in\widehat{\mathcal Y}}$ of Gaussian processes indexed by $(\widehat\xi,\widehat u)\in \widehat{\mathcal Y}$.
Throughout the rest of the article,
we will also denote by 
\begin{equation}\label{eq-F-xi-u}
\Phi(\hat\xi,\hat u)=\E^\P\bigg[\exp\bigg\{-\frac 12\sum_{i=1}^n u_i^2 |\omega(t_i)-\omega(s_i)|^2\bigg\}\bigg].
\end{equation}
the normalizing constant for the Gaussian measure $\mathbf P_{\hat\xi, \hat u}$.

We now take note of the following fact. Suppose we have collections $\{\xi_r\}_r$ with each
$$
\xi_r=\bigg\{ [s_i,t_i] \bigg\}_{i=1}^{n(r)}
$$
being a collection of $n(r)$ overlapping sub-intervals $[s_i,t_i]$, such that their {{unions}}
 $$
\mathcal J_r=\mathcal J(\xi_r)= \bigcup_{i=1}^{n(r)} [s_i,t_i]
 $$
{ which are again intervals, and are mutually disjoint} (i.e., $\mathcal J_r\cap \mathcal J_{r^\prime}=\emptyset$ if $r\neq r^\prime$). 
Then if  $\mu_r \in \Mcal_0$ with $\supp(\mu_r)\subset \mathcal J_r$, then for any $\bar u_r:=(u_1,\dots, u_{n(r)})\in (0,\infty)^{n(r)}$
\begin{equation}\label{eq-P-xi-u-independence}
\mathcal Q_{\hat\xi, \hat u}\bigg(\sum_r \mu_r\bigg)=\sum_r \mathcal Q_{\xi_r, \bar u_r}(\mu_r),
\end{equation}
where $\hat\xi=\cup_r \xi_r$ and $\hat u=\{\bar u_r\}_r$, with each quadratic form $\mathcal Q_{\xi_r, \bar u_r}(\mu_r)$ being defined as in \eqref{eq-Q-xi-u}, i.e., 
\begin{equation}\label{eq-Q-xi-u-2}
\mathcal Q_{\xi_r, \bar u_r}(\mu_r)= \sup_{\omega(\cdot) }\bigg[ 2\int \omega(s)\mu_r(\d s)-\int [\omega^\prime(s)]^2\,\,\d s-\sum_{i=1}^{n(r)} u_i^2 [\omega(t_i)-\omega(s_i)]^2\bigg].
\end{equation}
The corresponding Gaussian measure will be denoted by $\mathbf P_{\xi_r, \bar u_r}$. This proves the mutual independence of the restrictions of the previously defined Gaussian measure $\mathbf P_{\hat\xi,\hat u}$ to the disjoint collections $\xi_r$.

\subsection{The Poisson point process and the mixing measure.}

Let us fix a finite $T>0$ and $\alpha>0$. Let $\Gamma_{\alpha,T}$ denote the law of the Poisson point process with intensity measure 
$$
\gamma_{\alpha,T}=\alpha \e^{-(t-s)} \,\ 1_{-T\leq s<t \leq T} \,\, \d s \, \d t.
$$
Then $\Gamma_{\alpha,T}$ is a probability measure on the space $\widehat\cX$ and each realization of the point process is given by a random number $n$ of possibly overlapping 
intervals $\{[s_i,t_i]\}_{i=1}^n$. As remarked earlier, the union of these intervals need not be connected, 
and will be a union of disjoint intervals, with gaps in-between and each interval being a union of overlapping sub-intervals $\{[s_i,t_i]\}_{i=1}^{n(r)}$, with $n=\sum_r n(r)$. 
We will call each $\xi_r=\{[s_i,t_i]\}_{i=1}^{n(r)}$ an {\it active period}, or a {\it{cluster}}, and these clusters will be separated by gaps that we will call {\it dormant} periods and denote them by $\xi^\prime_r$.
Dormant and active periods alternate, beginning and ending with dormant periods $\xi^\prime_1=[-T,\min_i s_i]$ and $\xi^\prime_k=[\max_i t_i,T]$, splitting the interval $[-T,T]$  into a collection 
$$
\widehat\xi=\bigg\{\xi^\prime_1,\xi_1,\dots,\xi^\prime_{k-1},\xi_{k}, \xi_{k+1}^\prime\bigg\}
$$
of $k+1$ dormant intervals $\{\xi^\prime_{r}\}$ and $k$ active intervals
$\{\xi_{r}\}$. Then, with $u_r=(u_1,\dots, u_{n(r)})\in (0,\infty)^{n(r)}$,  the quadratic form $\mathcal Q_{\xi_r,u_r}$ defined in \eqref{eq-Q-xi-u} also provides
a Gaussian measure $\mathbf P_{\xi_{r},u_r}$ on each active period $\xi_{r}$, while on any of the dormant interval $\xi^\prime_r$, this Gaussian measure coincides with the laws of Brownian increments $\P$,
which is of course given by the quadratic form $\mathcal Q$ (recall \eqref{eq-Q-mu}). Thanks to independent increments on disjoint periods, the normalization constant defined in \eqref{eq-F-xi-u} also
splits as the product
$$
\Phi(\hat\xi,\hat u)=\prod_{r=1}^k \Phi(\xi_{r},\bar u_{r})= \prod_{r=1}^k \E^\P\bigg[\exp\bigg\{-\frac 12\sum_{i=1}^{n(r)} u_i^2 |\omega(t_i)-\omega(s_i)|^2\bigg\}\bigg],
$$
which combined with the earlier remark (recall \eqref{eq-P-xi-u-independence}, leads to the factorization 
\begin{equation}\label{eq-P-xi-u-2}
\begin{aligned}
\mathbf P_{\hat \xi, \hat u}=\prod_{r=1}^{k} \mathbf P_{\xi_r, \bar u_r} \qquad &(\widehat\xi,\widehat u)\in \widehat{\mathcal Y}\,\,\,\mbox{with}\,\,\,\widehat\xi=\big\{\xi^\prime_1,\xi_1,\dots,\xi^\prime_{k-1},\xi_{k}, \xi_{k+1}^\prime\big\} \quad\mbox{and} \\
&\qquad \widehat u=(\bar u_{r})_{r=1}^k, \,\,\bar u_{r}\in (0,\infty)^{n(r)},
\end{aligned}
\end{equation}
of the Gaussian measure on increments that is independent over different $\xi_r$. 

Then with $\Gamma_{\alpha, T}$ being the law of the point process with intensity $\gamma_{\alpha,T}$ (i.e., $\Gamma_{\alpha,T}$ is a probability measure on the space $\widehat\cX$,  recall \eqref{eq-X}), 
for any $\lambda$, since $\sum_{r=1}^k |{\mathcal J }(\xi_r)|+\sum_{r=1}^{k+1}|\xi_r'|=2T$, we can write our mixing measure 
${\widehat\Theta}_{\alpha,T}$ on the space $\widehat{\mathcal Y}$ as 
\begin{equation}\label{eq-Theta-hat-T}
\begin{aligned}
\widehat\Theta_{\alpha,T}\big(\d \hat\xi\,\d \hat u\big)&= \frac {\e^{2\lambda T}} {Z_{\alpha,T}} \,\, \bigg[\bigg(\sqrt{\frac 2\pi}\bigg)^{n} \,\,\Phi(\hat\xi,\hat u) \,\,\d\hat u\bigg]\,\e^{-\lambda(\sum_{r=1}^k |{\mathcal J}(\xi_r)|+\sum_{r=1}^{k+1}|\xi^\prime_r|)}    \,\Gamma_{\alpha,T}(\d\hat\xi)
\\
&=\frac {\e^{2\lambda T}} {Z_{\alpha,T}} \,\, \prod_{r=1}^k\bigg[\bigg(\sqrt{\frac 2\pi}\bigg)^{n(r)} \,  \e^{-\lambda |{\mathcal J}(\xi_r)|}\, \Phi(\xi_{r},\bar u_{r}) \,\,\d \bar u_r\bigg] \,  e^{-\lambda\sum_{r=1}^{k+1} |\xi^\prime_r|} \,\Gamma_{\alpha, T}(\d\hat\xi)
\end{aligned}
\end{equation}
where $Z_{\alpha,T}$ is the normalizing constant of the Polaron measure that also makes 
$\widehat\Theta_{\alpha,T}$ a probability measure on  $\widehat{\mathcal Y}$.

We are now ready to state the main result of this section. Recall that if $\P_T$ denotes the restriction of $\P$ to the finite time interval $[-T,T]$, then 
$$
\widehat\P_{\alpha,T}(\d\omega)= \frac 1 {Z_{\alpha,T}}\,\,\mathcal H_{\alpha,T}(\omega) \,\,\P_T(\d\omega)
$$
defines the finite volume Polaron measure with exponential weight
$$
\mathcal H_{\alpha,T}(\omega)= \exp\bigg[\frac\alpha2\int_{-T}^T\int_{-T}^T \,\,\d t\d s\,\, \frac{\e^{-|s-t|}} {|\omega(t)-\omega(s)|}\bigg], 
$$
and normalizing constant $Z_{\alpha,T}$.

Here is the statement of our first main result. 
\begin{theorem}\label{thm1}
Fix any  $\alpha>0$ and  $T>0$.  Then there exists a probability measure ${\widehat\Theta}_{\alpha,T}$ on the space $\widehat{\mathcal Y}$ defined in \eqref{eq-Theta-hat-T}, such that  \begin{equation}\label{eq-Gaussian-mixture}
\widehat\P_{\alpha,T}(\cdot)=\int_{\widehat{\mathcal Y}} \, \mathbf P_{\hat\xi,\hat u}(\cdot)\,\,\widehat\Theta_{\alpha,T}(\d\hat\xi\, \d \hat u),
\end{equation}
where, for any $(\hat\xi,\hat u)\in \widehat{\mathcal Y}$, $\mathbf P_{\hat \xi,\hat u}$ is the centered Gaussian measure on increments defined in \eqref{eq-P-xi-u-2}.
\end{theorem}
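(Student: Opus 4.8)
The plan is to expand the exponential weight $\mathcal H_{\alpha,T}$ into a power series, represent the Coulomb kernel by a Gaussian integral, and then recognize the outcome as an average, over a Poisson field of intervals carrying marks, of exponentially tilted copies of Wiener measure — each of which is a centered Gaussian on increments. First I would symmetrize: since $(t,s)\mapsto \e^{-|t-s|}|\omega(t)-\omega(s)|^{-1}$ is symmetric, the exponent of $\mathcal H_{\alpha,T}$ equals $\alpha\iint_{-T\le s\le t\le T}\e^{-(t-s)}|\omega(t)-\omega(s)|^{-1}\,\d s\,\d t$; expanding the exponential and inserting $|x|^{-1}=c_0\int_0^\infty \e^{-\frac12 u^2|x|^2}\,\d u$ (with $c_0=\sqrt{2/\pi}$) in each of the $n$ factors produces exactly the representation \eqref{eq-Polaron-01}. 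Every integrand here is nonnegative, so Tonelli's theorem will license all interchanges of the sum, the $(s_i,t_i,u_i)$-integrals, and (below) the $\omega$-integral against $\P_T$; finiteness of $Z_{\alpha,T}=\E^\P[\mathcal H_{\alpha,T}]$ is taken as known.

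Next I would read off the probabilistic structure. The measure $\alpha\e^{-(t-s)}\1_{\{-T\le s<t\le T\}}\,\d s\,\d t$ is precisely the intensity $\gamma_{\alpha,T}$, so the series $\sum_n\frac1{n!}\int\cdots\int\prod_i\gamma_{\alpha,T}(\d s_i\,\d t_i)(\cdots)$ is, up to an overall constant $\e^{\|\gamma_{\alpha,T}\|}$ that will be swallowed by the normalization, the expectation under the Poisson law $\Gamma_{\alpha,T}$, while the $u_i\in(0,\infty)$ are independent marks attached to the intervals, so $(\widehat\xi,\widehat u)$ ranges over $\widehat{\mathcal Y}$. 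This gives
\[
\mathcal H_{\alpha,T}(\omega)\,\P_T(\d\omega)\ \propto\ \int_{\widehat{\mathcal Y}}\Big[\e^{-\frac12\sum_{i}u_i^2|\omega(t_i)-\omega(s_i)|^2}\,\P_T(\d\omega)\Big]\,c_0^{\,n}\,\d\widehat u\ \Gamma_{\alpha,T}(\d\widehat\xi),
\]
and for each fixed $(\widehat\xi,\widehat u)$ the bracket is a finite measure of total mass $\Phi(\widehat\xi,\widehat u)$ by \eqref{eq-F-xi-u}. The heart of the argument is to verify that its normalization is the Gaussian measure $\mathbf P_{\widehat\xi,\widehat u}$ of \eqref{eq-P-xi-u-2}.

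Establishing this is the step I expect to be the main obstacle, and it is where the infinite-dimensional bookkeeping must be done with care. The computation of $\E^{\mathbf P_{\widehat\xi,\widehat u}}\big[(\int_\R\omega(s)\,\mu(\d s))^2\big]$ for a test measure $\mu\in\Mcal_0$ compactly supported in $[-T,T]$ involves only the finitely many increments $\{\omega(t_i)-\omega(s_i)\}_i$ together with $\int\omega\,\d\mu$, so I would pass to the jointly Gaussian vector they span: there $\P_T$ has density proportional to $\exp(-\frac12\langle x,A_0x\rangle)$, with the form $\langle x,A_0x\rangle$ being the finite-dimensional version of the Dirichlet energy $\int[\omega'(s)]^2\,\d s$ whose Legendre dual is the right side of \eqref{eq-Q-mu}; multiplying by $\exp(-\frac12\langle x,Bx\rangle)$, where $B\ge 0$ corresponds to $\sum_i u_i^2|\cdot|^2$, yields the centered Gaussian with inverse covariance $A_0+B$, and by the min--max duality for positive quadratic forms its covariance form at $\mu$ is exactly $\sup_\omega\big[2\int\omega\,\d\mu-\int[\omega']^2\,\d s-\sum_i u_i^2[\omega(t_i)-\omega(s_i)]^2\big]=\mathcal Q_{\widehat\xi,\widehat u}(\mu)$, i.e.\ \eqref{eq-Q-xi-u} (the $\frac12$ in the tilt matching the absent $\frac12$ in front of the penalty). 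One then checks consistency over all $\mu$, notes $0\le\mathcal Q_{\widehat\xi,\widehat u}(\mu)\le\mathcal Q(\mu)$ so that $\mathcal Q_{\widehat\xi,\widehat u}$ is a finite positive-semidefinite form defining a centered Gaussian on $\mathcal F$, and takes three independent copies to obtain $\mathbf P_{\widehat\xi,\widehat u}$. Everything else is routine.

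Finally I would assemble the mixture. Combining the above gives $\widehat\P_{\alpha,T}(\d\omega)=\int_{\widehat{\mathcal Y}}\mathbf P_{\widehat\xi,\widehat u}(\d\omega)\,\frac{1}{Z_{\alpha,T}}\,c_0^{\,n}\,\Phi(\widehat\xi,\widehat u)\,\d\widehat u\ \Gamma_{\alpha,T}(\d\widehat\xi)$, and since each $\widehat\xi$ partitions $[-T,T]$ into alternating dormant and active periods with $\sum_r|\mathcal J(\xi_r)|+\sum_r|\xi_r'|=2T$, I may insert the identity $1=\e^{2\lambda T}\e^{-\lambda(\sum_r|\mathcal J(\xi_r)|+\sum_r|\xi_r'|)}$ for any $\lambda$; feeding in the factorizations of $\Phi$ and of $\mathbf P_{\widehat\xi,\widehat u}$ over active periods from \eqref{eq-P-xi-u-independence}--\eqref{eq-P-xi-u-2} turns this into precisely the measure $\widehat\Theta_{\alpha,T}$ of \eqref{eq-Theta-hat-T}, yielding \eqref{eq-Gaussian-mixture}. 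That $\widehat\Theta_{\alpha,T}$ is a probability measure then follows for free: integrating \eqref{eq-Gaussian-mixture} over $\omega$ and using $\mathbf P_{\widehat\xi,\widehat u}(\Omega)=1$ shows its total mass is $1$ exactly because $Z_{\alpha,T}$ normalizes $\widehat\P_{\alpha,T}$.
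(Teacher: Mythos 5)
Your proposal is correct and follows essentially the same route as the paper: expand $\mathcal H_{\alpha,T}$ via the power series and the Gaussian representation of the Coulomb kernel as in \eqref{eq-Polaron-01}, read off the Poisson structure with intensity $\gamma_{\alpha,T}$ and the marks $\hat u$, identify the tilted Wiener measure with the Gaussian measure $\mathbf P_{\hat\xi,\hat u}$ defined through the quadratic form \eqref{eq-Q-xi-u}, and absorb the normalizations into $\widehat\Theta_{\alpha,T}$ as in \eqref{eq-Theta-hat-T}. The only difference is that you spell out the quadratic-form duality and the bookkeeping (Tonelli, the constant $\e^{\|\gamma_{\alpha,T}\|}$, the inserted factor $\e^{2\lambda T}\e^{-2\lambda T}$) that the paper's two-line proof leaves implicit.
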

\begin{proof}
Let us recall \eqref{eq-Polaron-01} from Section \ref{sec-outline}. Then we have
$$
\mathcal H_{\alpha,T}(\omega)= \sum_{n=0}^\infty \frac {1}{n!} \prod_{i=1}^n \bigg[\int\gamma_{\alpha,T}(\d s_i\,\d t_i)\int_0^\infty\bigg(\sqrt{\frac 2\pi}\,\d u_i\bigg)\bigg(  \e^{-\frac 12 u_i^2 |\omega(t_i)-\omega(s_i)|^2}\bigg)\bigg].
$$
Then the { definitions of the Gaussian measure $\mathbf P_{\hat\xi,\hat u}$ and that of the mixing measure $\widehat\Theta_{\alpha,T}(\d\hat\xi,\d\hat u)$ complete} the proof of Theorem \ref{thm1}
\end{proof}
The following corollary asserts that the variance under $\P_T$  dominates the variance under Polaron $\widehat\P_{\alpha,T}$ for any fixed $\alpha>0$ and $T>0$.
\begin{corollary}\label{cor1}
 
For any $\alpha>0$ and $T>0$, and for any unit vector $v\in\R^3$, 
$$
\E^{\widehat\P_{\alpha,T}}\big[\big\langle v, \omega(T)-\omega(-T)\big\rangle^2\big] \leq \E^{\mathbb P_T}\big[\big\langle v, \omega(T)-\omega(-T)\big\rangle^2\big]
$$
\end{corollary}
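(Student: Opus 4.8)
\textbf{Proof proposal for Corollary \ref{cor1}.}
The plan is to combine the Gaussian mixture representation of Theorem \ref{thm1} with an elementary monotonicity of the quadratic forms $\mathcal Q_{\hat\xi,\hat u}$ relative to $\mathcal Q$. Since $\langle v,\omega(T)-\omega(-T)\rangle^2\ge 0$, integrating this functional against both sides of the mixture identity \eqref{eq-Gaussian-mixture} gives, in $[0,\infty]$,
\begin{equation*}
\E^{\widehat\P_{\alpha,T}}\big[\big\langle v,\omega(T)-\omega(-T)\big\rangle^2\big]=\int_{\widehat{\mathcal Y}}\E^{\mathbf P_{\hat\xi,\hat u}}\big[\big\langle v,\omega(T)-\omega(-T)\big\rangle^2\big]\,\widehat\Theta_{\alpha,T}(\d\hat\xi\,\d\hat u).
\end{equation*}
Since $\widehat\Theta_{\alpha,T}$ is a probability measure, it then suffices to establish, for each fixed $(\hat\xi,\hat u)\in\widehat{\mathcal Y}$, the pointwise bound $\E^{\mathbf P_{\hat\xi,\hat u}}[\langle v,\omega(T)-\omega(-T)\rangle^2]\le 2T=\E^{\P_T}[\langle v,\omega(T)-\omega(-T)\rangle^2]$ and then integrate; the value $2T$ on the right is just the variance of a one-dimensional Brownian increment, using $|v|=1$.

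To prove the pointwise bound I would first reduce it to a one-dimensional statement. As $\mathbf P_{\hat\xi,\hat u}$ is the product of three independent copies of the one-dimensional Gaussian measure $P_{\hat\xi,\hat u}$ and $|v|=1$, independence of the coordinates gives
\begin{equation*}
\E^{\mathbf P_{\hat\xi,\hat u}}\big[\big\langle v,\omega(T)-\omega(-T)\big\rangle^2\big]=\sum_{j=1}^3 v_j^2\,\E^{P_{\hat\xi,\hat u}}\big[(\omega(T)-\omega(-T))^2\big]=\E^{P_{\hat\xi,\hat u}}\big[(\omega(T)-\omega(-T))^2\big],
\end{equation*}
where on the right $\omega$ denotes one-dimensional increments. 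Next I would take $\mu=\delta_T-\delta_{-T}$: this measure is compactly supported with $\mu(\R)=0$, hence $\mu\in\Mcal_0$, and $\int_\R\omega(s)\,\mu(\d s)=\omega(T)-\omega(-T)$. The defining identity \eqref{eq-P-xi-u} and the definition of $\mathcal Q$ in \eqref{eq-Q-mu} then express the two one-dimensional second moments as $\mathcal Q_{\hat\xi,\hat u}(\mu)$ and $\mathcal Q(\mu)$ respectively, with $\mathcal Q(\mu)=2T$.

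The only substantive point is the inequality $\mathcal Q_{\hat\xi,\hat u}(\mu)\le\mathcal Q(\mu)$, valid for every $\mu\in\Mcal_0$, and it is immediate from comparing the variational formulas: for any admissible path $\omega$ in \eqref{eq-Q-xi-u}, the bracketed functional there equals the one in \eqref{eq-Q-mu} minus the nonnegative quantity $\sum_{i=1}^n u_i^2[\omega(t_i)-\omega(s_i)]^2$, so the supremum over $\omega$ defining $\mathcal Q_{\hat\xi,\hat u}(\mu)$ is a supremum, over the same class of paths, of a pointwise-smaller function, hence no larger than $\mathcal Q(\mu)$. Putting the three displays together yields the pointwise bound, and integrating against $\widehat\Theta_{\alpha,T}$ completes the argument. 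I do not anticipate a genuine obstacle here --- the content is simply that inserting the penalty terms $u_i^2[\omega(t_i)-\omega(s_i)]^2$ into the variational problem can only decrease the variance --- the one place deserving a word of care being that \eqref{eq-Q-mu} is invoked for the atomic measure $\mu=\delta_T-\delta_{-T}$ rather than an absolutely continuous one; but $\int\omega\,\d\mu=\omega(T)-\omega(-T)$ remains a bona fide continuous linear functional of the path, so this is harmless, and $\mathcal Q(\mu)=2T$ correctly recovers the Brownian variance.
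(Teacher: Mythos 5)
Your proposal is correct and follows exactly the paper's argument: the mixture representation of Theorem \ref{thm1} reduces the claim to the pointwise comparison $\mathcal Q_{\hat\xi,\hat u}(\mu)\le\mathcal Q(\mu)$ for $\mu=\delta_T-\delta_{-T}$, which holds because the variational formula \eqref{eq-Q-xi-u} differs from \eqref{eq-Q-mu} only by subtracting a nonnegative penalty. The paper's proof is just a terser version of the same reasoning.
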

\begin{proof}
Since for any $\mu\in \Mcal_0$ and $(\hat\xi,\hat u)\in \widehat{\mathcal Y}$, comparing \eqref{eq-Q-mu} and \eqref{eq-Q-xi-u}, we have
$$
\mathcal Q_{\hat\xi,\hat u} (\mu)\leq \mathcal Q(\mu),
$$
the proof of the claimed monotonicity is obvious. 
\end{proof}

\section{Some estimates with respect to birth and death processes.}\label{sec-3}

\subsection{A birth-death process $\Pi_\alpha$ on a single active period.}\label{sec-Q-alpha}
Let $\Pi_\alpha$ denotes the law of a birth-death process starting with population size $1$ at time $0$, and birth rate $\alpha>0$ and death rate $1$. It 
 is described by a continuous time Markov chain $(N_t)_{t\geq 0}$ taking values in $\Z_+=\{0,1,2,\dots\}$ with jump rates 
 \begin{equation}\label{eq-jump-rate}
 a_{n,n+1}= \alpha\qquad a_{n,n-1}=n
 \end{equation}
 if $n\geq 1$ denotes the curent population size. Then the waiting time at state $n$ until the next event of birth or death is exponentially distributed with parameter $n+\alpha$, and the probabilities 
 of jumping to $n+1$ and $n-1$ are respectively $\alpha/(n+\alpha)$ and $n/(n+\alpha)$. We will also denote the successive jump times of this continuous time Markov chain as $\{\sigma_j\}$.
 Note that the evolution of this birth-death process then describes an active period, which starts at the birth of an individual and lasts until the last death, i.e., at time 
 \begin{equation}\label{eq-extinction}
 \sigma^\ast=\inf \{t>0\colon N(t)=0\}.
 \end{equation}
 Note that we also have an embedded discrete time Markov chain 
$$
 X_j=N(\sigma_j+0) \qquad X_0=1.
$$
with transition probabilities 
 \begin{equation}\label{eq-Markov-chain}
 \mathrm{Prob}\big\{X_{j+1}=n+1| X_j=n\big\}=\frac{\alpha}{n+\alpha}\qquad  \mathrm{Prob}\big\{X_{j+1}=n-1| X_j=n\big\}=\frac{n}{n+\alpha}.
 \end{equation}
 Note that this population size Markov chain will hit $0$ after $\ell$ steps where $\ell=\inf \{j: X_j=0\}$ and  $\sigma_\ell=\sigma^\ast$ is the extinction time, and $\ell=2n-1$ if $n-1$  is the number of new births. 
 
 Furthermore, we have the lifetimes of the individuals $\{[s_i,t_i]\}$, 
 and we will write
 $$
\mathcal J(\xi)=\big[\min_i s_i,\,\,\max_i t_i\big] =[0,\sigma^\star]
 $$
for the time-span of the active period $\xi=\{[s_i,t_i]\}_{i=1}^{n(\xi)}$ with $n(\xi)\geq 1$ individuals. We assume without loss of generality that
$t_1<t_2\cdots<t_{n}=\sigma^\ast$. Each $t_i=\sigma_r$ for some $r=r_i$. We denote by 
\begin{equation}\label{eq-delta-def}
\delta_i=\sigma_{r_i}-\sigma_{r_i-1},
\end{equation}
and note that given $X_{r_i-1}$, the distribution of $\delta_i$ is exponential with rate $\alpha+X_{r_i-1}$.  The life times 
\begin{equation}\label{eq-tau-def}
\tau_i=t_i-s_i
\end{equation}
are all exponentials with rate $1$.

For any single active period $\xi$, throughout the rest of the article, we will write 
\begin{equation}\label{eq-Phi-1}
\mathbf F(\xi)= \bigg(\sqrt{\frac 2\pi}\bigg)^{n(\xi)} \int_{(0,\infty)^{n(\xi)}}\,\d \bar u\,\, \Phi(\xi,\bar u)
\end{equation}
with $n(\xi)\in \N$ being the number of  individuals that constitute $\xi$, and as usual, 
$$
\Phi(\xi,\bar u)=\E^\P[\exp\{-\frac 12\sum_{i=1}^{n(\xi)} u_i^2 |\omega(t_i)-\omega(s_i)|^2\}].
$$ 
Also $\mu_\alpha$ will denote the exponential distribution with parameter $\alpha$, and $\Pi_\alpha$ is the law of a single active period $\xi$, i.e., the law of the birth-death process with birth rate $\alpha>0$ and death rate $1$ starting with one individual at time $0$, along with  the information that matches the birth and death of each individual.

\begin{remark}
For our purposes it is convenient to use some further notation. Recall that  in an active period $\xi=\{[s_i,t_i]\}_{i=1}^{n(\xi)}$ we take the starting time as $0$ with population size as $1$ and then we can have  certain number $n(\xi)-1$
additional births before the population becomes extinct and we have the lifetimes $\{[s_i,t_i]\}$ of these individuals 
$t_i>s_i$ for $i=1,2,\cdots,n(\xi)$. In addition, the union $\mathcal J(\xi)=\cup_{i=1}^{n(\xi)} [s_i,t_i]$ is again an interval without gaps, denoting the time span of the active period $\xi$. 
We can also think of this time span as the history or the excursion of a single active period $\xi$. For notational convenience, we will write
\begin{equation}\label{eq-Y}
\mathcal X=\{\xi\} \qquad \mathcal Y=\{(\xi,\bar u)\}
\end{equation}
such that $\xi=\{[s_i,t_i]\}_i$ is an active period (i.e., $\xi$ is a collection of finitely many overlapping intervals whose union $\mathcal J(\xi)=\cup_{i=1}^{n(\xi)}[s_i,t_i]$ is again an interval),  and $\bar u=(u_i)_{i=1}^{n(\xi)}$ is
a positive vector with each $u_i$ attached to the information linking birth at $s_i$ and death at $t_i$. 
\end{remark}

\subsection{A birth-death process $\Pi_{\alpha,T}$ depending on terminal time $T$ on a single active period}\label{sec-Q-alpha-T}

We recall that for each fixed $\alpha>0$ and $T>0$, we have the law $\Gamma_{\alpha,T}$ of the Poisson point process with intensity measure $\gamma_{\alpha,T}(\d s \d t)= \alpha \e^{-(t-s)} \, \1_{-T\leq s < t\leq T}\, \d s \, \d t$. 
We can also have a birth-death process whose distribution is obtained from restricting this Poisson process to {\it{the excursion of the first active period}} $(\xi,u)\in \cY$ with $\mathcal J(\xi)\subset [-T,T]$.
We will  denote by $\Pi_{\alpha,T}$ the probability distribution of this birth and death process  
on the first excursion $(\xi, \bar u)\in \mathcal Y$ (starting from a population of size $1$), and both birth and death rates of this process will depend on the ``remaining time": The birth rate corresponding to this process $\Pi_{\alpha,T}$ is given by the marginal 
\begin{equation}\label{eq-birth-rate-T}
b_{\alpha,T}(s)=\int_s^T\alpha \e^{-(t-s)} \d t=\alpha(1-\e^{-(T-s)}),
\end{equation}
while the death rate is computed as
\begin{equation}\label{eq-death-rate-T}
d_{\alpha,T}(t)=-\frac{\d}{\d t}\bigg[\log\bigg(\frac{\int_t^T \e^{-(s-a)}\d s}{\int_a^T \e^{-(s-a)} \d s}\bigg)\bigg]=\frac{1}{1-\e^{-(T-t)}}.
\end{equation}

\begin{remark}\label{rmk-bd-1}
Note that  the Poisson point process whose realizations are intervals $[s,t]\subset [-T,T]$ can be recovered from the Poisson process with realizations $[s,t]\subset (-\infty,\infty)$ 
by simply deleting intervals that are not contained in $[-T,T]$. Likewise, the birth-death process $\Pi_{\alpha,T}$ can be obtained from the law of the birth death process $\Pi_\alpha$ defined in Section \ref{sec-Q-alpha} by trimming down records of individuals
whose lifespan exceeds the terminal time $T$.
\end{remark}
\begin{remark}\label{rmk-bd-2}
It follows from \eqref{eq-birth-rate-T} and \eqref{eq-death-rate-T} that both birth and death rates $b_{\alpha,T}(\d s)$ and $d_T(\d s)$ at a given time $s$ depend on the 
``leftover time" $(T-s)$. In particular, the birth-rate $b_{\alpha,T}(s)$ drops to $0$, while the death rate $d_T(s)$ shoots up like $\frac 1{T-s}$ as the terminal time $T$ is approached.
\end{remark}
\begin{remark}\label{rmk-bd-3}
Note that the birth rate $b_{\alpha,T}$ and death rate $d_{\alpha,T}$ of $\Pi_{\alpha,T}$ converges to the birth rate $\alpha$ and death rate $1$ of $\Pi_\alpha$
as long as the remaining time is large enough. Moreover for finite $T$, the birth rate is smaller and the death rate is higher, 
and as $T\to\infty$,  $\Pi_{\alpha,T}$ converges to to $\Pi_\alpha$ as $T\to\infty$ in the total variation distance in the space of probability measures on $\cX$. 
\end{remark}

\subsection{An estimate on a single renewal period}\label{sec-thm2.6}

\begin{theorem}\label{thm2.6}
Fix any $\alpha>0$ and let $\lambda(\alpha)=g(\alpha)-\alpha$ with $g(\alpha)=\lim_{T\to\infty}\frac 1 {2T}\log Z_{\alpha,T}$ as in \eqref{eq-DV-0}. Then with $\Pi_\alpha$ being the law of the birth-death process on any active period $\xi$ with birth rate $\alpha$ and death rate $1$ (starting with population size $1$ at time zero), and $\mu_\alpha$ being the exponential distribution with parameter $\alpha$ on a dormant period $\xi^\prime$, 
we have 
\begin{align}
&q(\alpha):=\E^{\mu_\alpha\otimes\Pi_\alpha}\big[\e^{-\lambda(\alpha)(|\xi^\prime |+|\mathcal J(\xi)|)}{\mathbf F}(\xi)\big]=1,\label{eq1}
\\
&L(\alpha):=\E^{\mu_\alpha\otimes\Pi_\alpha}\big[\e^{-\lambda(\alpha)(|\xi'|+|\mathcal J(\xi)|)}(|\xi'|+|\mathcal J(\xi)|){\mathbf F}(\xi)]<\infty.\label{eq2}
\end{align}
\end{theorem} 
\noindent{\bf Notation: The birth-death process $\Gamma_{\alpha,T}$ on the entire period $[-T,T]$:} Before presenting the proof of Theorem \ref{thm2.6}, let us first we recollect some notation from Section \ref{sec-2}, where we introduced the Poisson point process (PPP) $\Gamma_{\alpha,T}$ with intensity measure $\gamma_{\alpha,T}(\d s \d t):=\alpha \e^{-(t-s)} \,\ 1_{-T\leq s<t \leq T} \,\, \d s \, \d t$. 
The above PPP also can be recast as a birth-death process on the entire time span $[-T,T]$ whose law is also denoted by $\Gamma_{\alpha,T}$,
and any realization of which starts with zero population size $n(-T)=0$ at time $-T$, with the population conditioned to die out at the terminal time $T$ (i.e., conditioned on event $\{n(T)=0\}$). Under $\Gamma_{\alpha,T}$, the birth rate at time $s$ is $b_{\alpha,T}(s)=\alpha(1-e^{T-s})$, while its death rate at time $t$ is $d_{\alpha,T}(t)=(1-e^{-(T-t)})^{-1}$. Note that at any given time $s$, both birth and death rates depend {\it only} on  $T-s$,  which is the {\it duration of time left}. In other words, 
the law $\Pi_{\alpha,T}$ defined in Section \ref{sec-Q-alpha-T} is the restriction of $\Gamma_{\alpha,T}$ on the first excursion of a single active period.

Also, with $\gamma_{\alpha,T}(\d s \d t)=\alpha \e^{-(t-s)} \,\ 1_{-T\leq s<t \leq T} \,\, \d s \, \d t$  we have 
 \begin{equation}\label{cdef2}
 \begin{aligned}
\alpha c(T)&= \int\int \gamma_{\alpha,T}(\d s \d t)= \alpha \int\int_{-T\leq s < t \leq T} \e^{-(t-s)} \d s \d t\\
&=\alpha\int_{-T}^T  (1-\e^{-(T-s)}) \d s=2\alpha T+o(T),
\end{aligned} 
\end{equation}
as $T\to\infty$. 
Then \eqref{cdef2}, together with the rewrite of the Polaron measure $\widehat\P_{\alpha,T}$ in the proof of Theorem \ref{thm1}
imply 
\begin{equation}\label{1}
\begin{aligned}
&\E^{\Gamma_{\alpha,T}}\Big[\frac{\e^{\alpha c(T)}}{Z_{\alpha,T}} \prod_{j=1}^{k^\star(T)} \mathbf F(\xi_j)\Big]=1, \quad\mbox{so that    }
\E^{\Gamma_{\alpha,T}}\big[\e^{-(g(\alpha)-\alpha)(2T)}\prod_{j=1}^{k^\star(T)} \mathbf F(\xi_j)\big] =\e^{o(T)}\end{aligned}
\end{equation}
as $T\to\infty$. In the second expression above, we used \eqref{cdef2} and the asymptotic behavior $g(\alpha)= \lim_{T\to\infty}\frac 1 {2T}\log Z_{\alpha,T}$ of the partition function $Z_{\alpha,T}$; recall \eqref{eq-DV-0}. We also remark that under $\Gamma_{\alpha, T}$ although the individual terms in the expectation in \eqref{1} are dependent, this dependence disappears as $T\to \infty$.
\begin{proof}[{\bf Proof of Theorem \ref{thm2.6}}]
\noindent{\bf Step 1:} First we prove that $q(\alpha) \leq 1$ in \eqref{eq1}. It will be convenient to slightly change the notation: we write $\Gamma_{\alpha, T}$ for the law of the birth and death process with the same birth rate $b_{\alpha,T}$ and death rate $d_{\alpha,T}$ defined before and with the same conditioning of dying out $\{n(T)=0\}$ at time $T$
but conditioned to start with zero population $n(0)=0$ at time zero (instead of conditioning on $\{n(-T)=0\}$), so that it has ``remaining time" $T$ (since both birth and death rates only depend on the remaining time, this change of notation is irrelevant). Let us define $\theta_1=\inf\{t: n(t)=1\}$ and $\sigma_1=\inf\{t: t> \tau \  {\rm and} \  n(t)=0\}$. We can recursively define $\theta_j,\sigma_j$ for $j=2,\dots, k^\star(T)$, and since we will have to end in a dormant period by time $T$, eventually $\theta_{k^\star(T)+1}$ will not exist (we will then say that $\theta_{k^\star(T)+1}=\infty$). Then writing $2T=\sum_{j=1}^{k^\star(T)+1}|\xi_{j}^\prime|+\sum_{j=1}^{k^\star(T)}|\mathcal J(\xi_j)|$ in \eqref{1}, we have
$$
\E^{\Gamma_{\alpha, T}}\bigg[\prod_{j=1}^{k^\star(T)}  \big [{\bf F}(\xi_j) \e^{-(\sigma_{j}-\sigma_{j-1})(g(\alpha)-\alpha)}\big] \e^{-(T-\sigma_{k^\star(T)})(g(\alpha)-\alpha)}\bigg]=\e^{o(T)}
$$
For any $A>0$ let us denote by ${\bf F}_A(\xi)=\min\{A, {\bf F}(\xi)\}$ the truncation of $\mathbf F$ at level $A$. 
Let $\delta>0$ be a (small) constant which
will be chosen later. Note that our renewal period consists of one dormant period, followed immediately by an active period. 
If  a renewal block starts at time $\theta_j<(1-\delta)T$, $\mathbf F$ (corresponding to the active period belonging to that renewal period) 
it will be truncated (i.e. will be replaced by $\mathbf F_A$), otherwise not. Let $\xi_{j^\ast(T)}$
 be the first  active period starting at a time (strictly) after $(1-\delta)T$. Then the above display implies 
\begin{equation}\label{2}
\begin{aligned}
&\E^{\Gamma_{\alpha, T}}\bigg[\prod_{j=1}^{j^\ast(T)-1}  \big [{\bf F}_ A(\xi_j)\e^{-(\sigma_{j}-\sigma_{j-1})(g(\alpha)-\alpha)}\big]\\
&\qquad\quad\times\prod_{j=j^\ast(T)}^{k^\star(T)}  \big[{\bf F}(\xi_j) \e^{-(\sigma_{j}-\sigma_{j-1})(g(\alpha)-\alpha)}\big] \, \e^{-(T-\sigma_{k^\star(T)})(g(\alpha)-\alpha)}\bigg]\le \e^{o(T)}. 
\end{aligned}
\end{equation}
We would like to provide a suitable lower bound for the expectation of the second product $\prod_{j=j^\star(T)}^{k^\star(T)}$ above. Note that we always have 
$-[\sum_{j=j^\star(T)}^{k^\star(T)} (\sigma_j-\sigma_{j-1}) + (T-\sigma_{k^\star(T)})][g(\alpha)-\alpha] \ge - C\delta T$ for some $C=C(\alpha)\in (0,\infty)$.  Now, by 
a similar lower bound as in the proof of Lemma \ref{lemma-sqrt2}\footnote{In the proof of Lemma \ref{lemma-sqrt2} (see below) we use that for any symmetric positive definite matrix $M=(m_{ij})$, $\mathrm{det}(M)\leq \prod_i m_{ii}$. In our case, $m_{ii}=u_i^2 |J_i| = u_i^2 \tau_i= u_i^2(t_i-s_i)$, and we obtain a lower bound 
$\Phi(\xi_j,\bar u) \geq \prod_{i=1}^{n(\xi_j)} (1+ u_i^2 \tau_i)^{-3/2}$. Using that $\int_0^\infty \frac{\d u}{(1+ u^2 \tau)^{-3/2}} =\frac 1 {\sqrt\tau}$ and that under $\Pi_\alpha$, the lifespan $\tau$ is exponentially distributed with parameter $1$, we obtain $\E^{\Pi_\alpha}[\frac 1{\sqrt\tau}]=\sqrt\pi$. In the present context, we can use the same argument together with the following observation to get a similar lower bound: Note that $\Pi_{\alpha,T}[\tau >t]= \e^{-\int_0^t d_{\alpha,T}(s) \d s}$, and 
 $d_{\alpha,T}(s)=\frac 1 {1- \e^{-(T-s)}}>1$. Now if $f(t)=\Pi_\alpha[\tau>t]$ and $g(t)= \Pi_{\alpha,T}[\tau>t]$, and since $d_{\alpha,T}(\cdot)>1$, we have 
$g(\cdot) \leq f(\cdot)$ and thus $-\int_0^\infty \frac 1{\sqrt\tau} \d g(\tau) \geq - \int_0^\infty \frac 1{\sqrt\tau} \d f(\tau)$.}
and by successive conditioning and the renewal property, we can obtain a suitable lower bound for the product $\prod_{j=j^\star(T)}^{k^\star(T)} \mathbf F(\xi_j)$. 
Combining these two lower bounds, we obtain from \eqref{2} that 
\begin{equation}\label{3}
\E^{\Gamma_{\alpha, T}}\Big[\prod_{j=1}^{j^\ast(T)-1}  \big [{\bf F}_ A(\xi_j)\e^{-(\sigma_{j}-\sigma_{j-1})(g(\alpha)-\alpha)}\big]\Big]\le \e^{C\delta T}. 
\end{equation}
Assume that in contrary to \eqref{eq1},  
$q(\alpha)=\E^{\Pi_\alpha\otimes\mu_\alpha}\big[\e^{-\lambda(\alpha)(|\xi^\prime |+|\mathcal J(\xi)|)}{\mathbf F}(\xi)\big]>1$. 
Recall that on any single cluster $\xi$, $\Pi_{\alpha,T}$ converges to $\Pi_\alpha$ as $T\to\infty$ in the total variation distance (see Remark \ref{rmk-bd-3}). Thus, for $T_0$ and $A$ sufficiently large, there is $q^\prime>1$ such that 
\begin{equation}\label{4}
\E^{\Gamma_{\alpha,T_0}}\big[\e^{-\lambda(\alpha)(|\xi^\prime |+|\mathcal J(\xi)|)}{\mathbf F_A}(\xi)\big]\geq q^\prime>1. 
\end{equation}
By successive conditioning and the renewal property, since $\delta T\to \infty$ for any fixed $\delta>0$, we obtain from \eqref{4} and \eqref{3} that 
$\E^{\Gamma_{\alpha,T}}[(q^\prime)^{j^\star(T)-1}] \leq \e^{C\delta T}$. But this leads to a contradiction because by the renewal theorem, there exists a constant $c_0\in (0,\infty)$ such that with high $\Gamma_{\alpha,T}$-probability as $T\to\infty$, we have $(j^\star(T) -1) \ge c_0 (1-\delta) T$. Given any $q^\prime$ we can choose $\delta>0$ sufficiently small to contradict \eqref{4}. Hence, for any $\alpha>0$, we have $q(\alpha)=\E^{\Pi_\alpha\otimes\mu_\alpha}\big[\e^{-[g(\alpha)-\alpha](|\xi^\prime |+|\mathcal J(\xi)|)}{\mathbf F}(\xi)\big]\leq 1$.

 \noindent{\bf Step 2:} In this step we will prove that under the tilted mixing measure $\widehat{\Theta}_{\alpha,T}$ defined below, the expected total length of all the dormant periods 
 is $O(T)$ (shown in \eqref{eq3} below) and that with positive $\widehat{\Theta}_{\alpha,T}$-probability there are also $O(T)$ many renewals (shown in \eqref{renewals}). 
 Let $f(s,t;\tau)$ be the indicator function of $(s,t;\tau)$ such that 
 $\tau\in (s,t)$ 
and  $W(\tau)=\sum_i f(s_i,t_i;\tau)$ is the number  
of intervals that cover $\tau$. We are interested in the probability distribution of $W$ under the tilted mixing measure (recall \eqref{eq-Theta-hat-T})
\begin{equation}\label{hatQ}
\widehat{\Theta}_{\alpha,T}(\d\hat\xi\d\hat u)= \frac{\e^{\alpha c(T)}}{Z_{\alpha,T}}\prod_{j=1}^{k^\star(T)}\bigg[\bigg(\sqrt{\frac2\pi}\bigg)^{n(\xi_j)} \Phi(\xi_j,\bar u_j)\d\bar u_j\bigg] \Gamma_{\alpha,T}(\d\hat\xi). 
\end{equation}
on the interval $[-T,T]$.

For this it suffices to compute, for any $\sigma>0$, 
\begin{align*}
&\E^{\widehat{\Theta}_{\alpha, T}}[\exp[-\sigma\sum_i f(s_i,t_i;\tau)]]\\
&= {\e^{\alpha c(T)}\over Z_{\alpha, T}} \E^{\Theta_{\alpha, T}}\E^\P\bigg[\bigg(\sqrt{\frac2\pi}\bigg)^{n(\hat\xi)} \int_{(0,\infty)^{n(\hat\xi)}}\d u_1\dots \d u_{n(\hat\xi)} \\
&\qquad\qquad\qquad\qquad\times \exp\bigg(-\sigma\sum_{i=1}^{n(\hat\xi)} f(s_i,t_i;\tau) -{1\over 2}\sum_{i=1}^{n(\hat\xi)} u_i^2 |\omega(t_i)-\omega(s_i)|^2\bigg)\bigg]
\\
&= {\e^{\alpha c(T)}\over Z(\alpha, T)} \E^{\Theta_{\alpha, T}}\E^\P\bigg[\prod_{i=1}^{n(\hat\xi)} \frac {\e^{-\sigma f(s_i,t_i;\tau)}}{|\omega(s_i)-\omega(t_i)|}     \bigg]
\end{align*}
where we wrote $n(\hat\xi)=\sum_j n(\xi_j)$ and in the last identity we used $|x|^{-1}=\sqrt{2/\pi}\int_0^\infty \d u \e^{-u^2|x|^2/2}$. Thus, 
\begin{equation}\label{eqPoisson}
\begin{aligned}
&\E^{\widehat{\Theta}_{\alpha, T}}\big[\exp[-\sigma\sum_i f(s_i,t_i,\tau)]\big]\\
&={1\over Z(\alpha,T)}\E^\P\bigg[ \sum_{n\geq 0}{\alpha^n\over n!} \prod_{i=1}^n \int\cdots\int_{-T\le s_i<t_i\le T}  \d s_i \d t_i \e^{-| t_i-s_i|} \frac {\e^{-\sigma f(s_i,t_i;\tau)}}{|\omega(s_i)-\omega(t_i)|}\bigg]\cr
&={1\over Z(\alpha,T)}\E^\P\bigg[\sum_{n\geq 0} {\alpha^n\over n!} \bigg(\int\int_{-T\le s<t\le T}\e^{-|t-s|}\frac {\e^{-\sigma f(s,t;\tau)}}{|\omega(t)-\omega(s)|}\d s\d t\bigg)^n \bigg]\cr
&=\E^{\widehat{\P}_{\alpha,T}}\bigg[\exp \bigg(-\alpha\int\int_{-T\le s<t\le T}\frac {\e^{-|t-s|}(1-\e^{-\sigma f(s,t;\tau)})} {|\omega(t)-\omega(s)|}\d s\d t\bigg)\bigg]\cr
\end{aligned}
\end{equation}
where $\widehat\P_{\alpha,T}$ is the Polaron path measure. Thus, the number $W(\tau)$ of intervals that cover $\tau$ is Poisson-distributed under $\widehat{\Theta}_{\alpha,T}$ 
with a random intensity parameter 
$$
\Lambda_{\alpha,T}^{\ssup\tau}(\omega)=\alpha\int\int_{-T\le s<\tau<t \le T}\frac {\e^{-|t-s|}}{|\omega(t)-\omega(s)|} \d t\d s.
$$

Let $\Lambda_{\alpha,T}=\frac 1{2T}\int_{-T}^T \Lambda_{\alpha,T}^{\ssup\tau} \d\tau$. Then 
\begin{equation}\label{derivative}
\begin{aligned}
\E^{\widehat\P_{\alpha,T}}[\Lambda_{\alpha,T}]&=\frac 1 {2T} \E^{\widehat\P_{\alpha,T}}\bigg[\int_{-T\leq s < t \leq T} \frac{\d s \d t \e^{-|t-s|} |t-s|}{|\omega(s)-\omega(t)|}\bigg] \\
&= \frac{\d}{\d\delta}\bigg(\frac 1 {2T} \log \E\bigg[\exp\bigg( \alpha \int_{-T\leq s < t \leq T} \frac{\e^{-(1+\delta)|t-s|}\d s \d t}{|\omega(s)-\omega(t)|}\bigg)\bigg]\bigg)\bigg|_{\delta=0}
\end{aligned}
\end{equation}
On the other hand, by Brownian rescaling and using the convexity of $\alpha\mapsto g(\alpha)=\lim_{T\to\infty} \frac 1 {2T}\log Z_{\alpha,T}\in (0,\infty)$ (recall \eqref{eq-DV-0}), 
we have $\sup_{T>0} \E^{\widehat\P_{\alpha,T}}[\Lambda_{\alpha,T}] =: C(\alpha)<\infty$. Since the number $W(\tau)$ of intervals that cover $\tau$ is Poisson-distributed under $\widehat{\Theta}_{\alpha,T}$ with parameter $\Lambda_{\alpha,T}^{\ssup\tau}$, we have by Jensen's inequality, 
\begin{equation}\label{eq3}
\liminf_{T\to\infty} \frac 1 {2T}\int_{-T}^T \d\tau \widehat{\Theta}_{\alpha,T}[W(\tau)=0] \geq \e^{-C(\alpha)}:= c_0(\alpha)>0.
\end{equation}
That is, under $\widehat{\Theta}_{\alpha,T}$, the expected total length of all the dormant periods is at least $c_0(\alpha)(2T)$. Using this, we want to show that 
with positive $\widehat{\Theta}_{\alpha,T}$-probability as $T\to\infty$, there are at least $O(T)$ dormant periods, that is, we will show that 
there is $c_1(\alpha)>0$ such that
\begin{equation}\label{renewals}
\liminf_{T\to\infty}\widehat{\Theta}_{\alpha,T}[k^\star(T) \geq c_1(\alpha) T] >0.
\end{equation} 
For this purpose, we will show that there is $\ell_0$ such that
the expected sum of the lengths of dormant periods larger than $\ell_0$ does not exceed $\frac 12 c_0(\alpha)(2T)$ for $T\geq T_0$, which 
will imply that with positive $\widehat{\Theta}_{\alpha,T}$-probability, there are at least $\frac 1 {\ell_0} \frac{c_0(\alpha)}2(2T)$ many dormant periods. Thus, it suffices to show that the expected sum of the lengths of dormant periods larger than $\ell$ is at most $\theta(\ell)(2T)$, and then we need to control $\int_{\ell}^\infty \sigma \d\theta(\sigma)$ or $\int_\ell^\infty \theta(\sigma) \d\sigma$ and choose $\ell_0$ such that $\int_{\ell_0}^\infty \theta(\sigma)\d\sigma \leq \frac 12 c_0(\alpha)(2T)$.

For this purpose, let us divide the interval $[-T,T]$ into $2T/\ell$ intervals $\{V_j\}_{j=1}^{2T/\ell}$ of length $\ell$. Any dormant period of length $2\ell$
in $[-T,T]$ will contain fully some $V_j$.  If in $[-T,T]$ there are  $N$ dormant periods of length at least $2\ell$, then  out of the $2T/\ell$ intervals $\{V_j\}$ of length $\ell$ at least $N$ are dormant. Therefore, it suffices to estimate the expected number of dormant intervals amongst these 
$2T/\ell$ intervals of length $\ell$. Thus, to prove \eqref{renewals} it suffices to show that, 
\begin{equation}\label{eq4}
\sup_T \frac 1 {2T} \sum_{j=1}^{2T/\ell} \widehat{\Theta}_{\alpha,T}[V_j\,\,\mbox{is dormant }] \leq \theta(\ell), \quad\mbox{such that   }\int_0^\infty \theta(\sigma)\d\sigma <\infty. 
\end{equation}
But as before (recall \eqref{eqPoisson}) if $W(V)$ denotes the number of intervals $\{[s_i,t_i]\}_i$ which intersect a given interval $V$, then under $\widehat{\Theta}_{\alpha,T}$, 
$W(V)$ is Poisson-distributed with a random intensity parameter $\Lambda^{\ssup V}_{\alpha,T}(\omega)= \alpha\int\int_{[s,t] \cap V\ne \emptyset} \frac{\e^{-(t-s)} \d s \d t}{|\omega(t)-\omega(s)|}$, and thus, 
\begin{equation*}
\begin{aligned}
\widehat{\Theta}_{\alpha,T}[V_j\,\,\mbox{is dormant }] &\leq \E^{\widehat\P_{\alpha,T}}\bigg[\exp\bigg(-\alpha\int\int_{[s,t] \cap V_j\ne \emptyset} \frac{\e^{-(t-s)} \d s \d t}{|\omega(t)-\omega(s)|}\bigg)\bigg]
\\
&\leq \E^{\widehat\P_{\alpha,T}}\bigg[\exp\bigg(-\alpha\sum_{i: [i,i+1]\subset V_j}\int_i^{i+1}\int_i^{i+1}\frac{\e^{-(t-s)} \d s \d t}{|\omega(t)-\omega(s)|}\bigg)\bigg]
\\
&\leq \E^{\widehat\P_{\alpha,T}}\bigg[\exp\bigg(-\alpha \e^{-1}\sum_{i: [i,i+1]\subset V_j}\int_i^{i+1}\int_i^{i+1}\frac{ \d s \d t}{|\omega(t)-\omega(s)|}\bigg)\bigg]
\end{aligned}
\end{equation*}
To show \eqref{eq4}, we estimate, for any $\lambda>0$, 
\begin{equation}\label{eq6}
\begin{aligned}
&\frac 1 {2T}\E^{\widehat\P_{\alpha,T}}\bigg[\sum_{j=-T/\ell}^{T/\ell}\exp\bigg(-\alpha \e^{-1}\sum_{i: [i,i+1]\subset V_j}\int_i^{i+1}\int_i^{i+1}\frac{ \d s \d t}{|\omega(t)-\omega(s)|}\bigg)\bigg]
\\
&\leq \frac 1{2\lambda T} \log \E^{\P}\bigg[\exp\bigg(\lambda \sum_{j=-T/ \ell}^{T/ \ell}\e^{- \alpha \e^{-1}\sum_{i=j\ell}^{(j+1)\ell-1} \int_i^{i+1}\int_i^{i+1}\frac{ \d s \d t}{|\omega(t)-\omega(s)|}} \bigg)\bigg] + \frac 1 {2\lambda T} H(\widehat\P_{\alpha,T}| \P)\\
&\leq \frac 1{\lambda \ell} \log \E^{\P}\bigg[\exp\bigg(\lambda  \e^{- \alpha \e^{-1}\sum_{i=0}^{\ell-1} \int_i^{i+1}\int_i^{i+1}\frac{ \d s \d t}{|\omega(t)-\omega(s)|}} \bigg)\bigg] + \frac C\lambda,
\end{aligned}
\end{equation}
where in the first estimate above we applied the relative entropy inequality, while in the second estimate we used $\sup_T \frac 1 {2T} H(\widehat\P_{\alpha,T}| \P)\leq C$.\footnote{The relative entropy estimate states that for any two probability measures $\mu$ and $\nu$ and any bounded measurable function $f$, for any $\lambda>0$, $\E^\nu[f] \leq \frac 1\lambda \log \E^\mu[\e^{\lambda f}]+ \frac 1 \lambda H(\nu| \mu)$. Presently the singularity of the Coulomb potential does not cause any trouble as we may replace $\frac 1 {|x|}$ by $\frac 1{1+|x|}$ to get the required upper bound. Also, to show $\sup_T \frac 1 {2T} H(\widehat\P_{\alpha,T}| \P)\leq C$ in the next step,  we have again used, just as in \eqref{derivative} 
that $\frac 1{2T} H(\widehat\P_{\alpha,T}|\P)= \frac 1 {2T}\E^{\widehat\P_{\alpha,T}}[\alpha \int\int \frac{\e^{-(t-s)\d s \d t}}{|\omega(t)-\omega(s)|}]- \frac 1 {2T}\log Z_{\alpha,T} = \frac{\d}{\d\delta}\big(\frac 1 {2T} \log \E^{\P}\big[\exp\big\{\alpha(1+\delta)\int\int \frac{\e^{-(t-s)\d s \d t}}{|\omega(t)-\omega(s)|}\big\}\big]\big)\big| _{\delta=0}- \frac 1 {2T}\log Z_{\alpha,T}$.} To show \eqref{eq4}, it suffices to choose $\lambda= \ell^2$ in the last expression of \eqref{eq6} and prove that 
$\sup_\ell \E^\P[\exp(\ell^2 \e^{-(\alpha/\e) \sum_{i=0}^{\ell-1} X_i})]<\infty$ with $X_i(\omega):= \int_0^1\int_0^1 \frac{\d s \d t}{|\omega(s)- \omega(t)|} \geq 0$ and $X_1,\dots, X_\ell$ are i.i.d. random variables. Since $x\mapsto 1/x$ is convex, for any $\delta>0$, we have by Gaussian tail estimate 
$\P[X_i \leq \delta] \leq \P[\int_0^1\int_0^1 {\d s \d t}{|\omega(s)-\omega(t)|} \geq \frac 1\delta\big] \leq \e^{-\frac k{\delta^2}}$ for some $k>0$. 
If we set $A:=\{\omega\colon\#\{i\colon X_i(\omega)\geq \delta\}\geq \ell/2\}$ and estimate $\E^\P[\exp(\ell^2 \e^{-(\alpha/\e) \sum_{i=0}^{\ell-1} X_i})]$ by splitting the integrand 
over $A$ and $A^c$, then it follows readily that $\sup_\ell \E^\P[\exp(\ell^2 \e^{-(\alpha/\e) \sum_{i=0}^{\ell-1} X_i})] < \infty$ if
we choose $\delta=\frac 1 {C\sqrt\ell}$ with $C>0$ sufficiently large. 
Thus \eqref{eq4} is shown which also completes the proof of \eqref{renewals}.

\noindent{\bf Step 3:} Finally, we prove the identity \eqref{eq1} and the estimate \eqref{eq2}. To show \eqref{eq1}, 
recall that in Step 1 we have already shown that 
$q(\alpha) \leq 1$. Let $\mathbb Q_\alpha$ be the law of the 
birth and death process starting with zero individuals at time $0$, with a birth rate of $\alpha>0$ and death rate of $1$ for each person in the population. In other words 
if $n$ is the current size of the population, the transition rates are $n\to n+1$ with rate $\alpha$ and $n\to n-1$ with rate $n$ provided $n>0$. It is a positive recurrent process with the Poisson distribution  with parameter $\alpha$ as the invariant measure. Note that $\Gamma_{\alpha,T}[\cdot]= \mathbb Q_\alpha[\cdot| n(T)=0]$ and 
there exists $p>0$ such that $\mathbb Q_{\alpha}[n(T)=0]\ge p$ for all $T$. Recall \eqref{hatQ}, and that $\sigma_{k^\star(T)}$ is the time when the last active period dies out before time $T$, also recall from \eqref{1} that $\frac{\e^{\alpha c(T)}}{Z_{\alpha,T}} \e^{2T(g(\alpha)-\alpha)}= \e^{o(T)}$ as $T\to\infty$. Then for any $c>0$,
$$
\begin{aligned}
&\widehat{\Theta}_{\alpha,T}[ k^\star(T) \geq c T] \\
&= \e^{o(T)} \E^{\Gamma_{\alpha,T}} \bigg[ \prod_{j=1}^{k^\star(T)} \bigg( \e^{-[g(\alpha)-\alpha] [ |\mathcal J(\xi_j)| + |\xi^\prime_j| ] } \mathbf F(\xi_j)\bigg) \e^{-[g(\alpha)-\alpha][T-\sigma_{k^\star(T)}]} \1\{k^\star(T) \geq c T\} \bigg] \\
&\leq \frac 1 p \e^{o(T)} \E^{\mathbb Q_\alpha}\bigg[ \prod_{j=1}^{k^\star(T)} \bigg( \e^{-[g(\alpha)-\alpha] [ |\mathcal J(\xi_j)| + |\xi^\prime_j| ] } \mathbf F(\xi_j)\bigg) \e^{-[g(\alpha)-\alpha][T-\sigma_{k^\star(T)}]} \\
&\qquad\qquad\qquad\qquad\qquad\1\{k^\star(T) \geq c T, \, \sigma_{k^\star(T)} < T; \, n(t)=0 \,\forall t\in [\sigma_{k^\star(T)},T]\big\}\bigg] \\
&= \frac 1 p \e^{o(T)} \E^{\mathbb Q_\alpha} \bigg[\prod_{j=1}^{k^\star(T)} \bigg( \e^{-[g(\alpha)-\alpha] [ |\mathcal J(\xi_j)| + |\xi^\prime_j| ] } \mathbf F(\xi_j)\bigg) \e^{-[g(\alpha)-\alpha][T-\sigma_{k^\star(T)}]} \\
&\qquad\qquad\qquad\qquad\times\e^{-\alpha(T-\sigma_{k^\star(T)})} \1\{k^\star(T) \geq c T\}  \1\{\sigma_{k^\star(T)} < T\}\bigg] \\
&=  \frac 1 p \e^{o(T)} \E^{\mathbb Q_\alpha} \bigg[\prod_{j=1}^{k^\star(T)} \bigg( \e^{-[g(\alpha)-\alpha] [ |\mathcal J(\xi_j)| + |\xi^\prime_j| ] } \mathbf F(\xi_j)\bigg) \\
&\qquad\qquad\qquad\qquad\times\e^{-g(\alpha)[T-\sigma_{k^\star(T)}]} \1\{k^\star(T) \geq c T\} \, \1\{\sigma_{k^\star(T)} < T\}\bigg] \\
&= \frac 1 p \e^{o(T)} \sum_{k=cT}^\infty \E^{\mathbb Q_\alpha} \bigg[\prod_{j=1}^{k} \bigg( \e^{-[g(\alpha)-\alpha] [ |\mathcal J(\xi_j)| + |\xi^\prime_j| ] } \mathbf F(\xi_j)\bigg) \e^{-g(\alpha)[T-\sigma_{k}]} \, \1\{\sigma_{k} < T\}\bigg] \\
&\leq \frac 1 p \e^{o(T)} \sum_{k=cT}^\infty \E^{\mathbb Q_\alpha} \bigg[\prod_{j=1}^{k} \bigg( \e^{-[g(\alpha)-\alpha] [ |\mathcal J(\xi_j)| + |\xi^\prime_j| ] } \mathbf F(\xi_j)\bigg)\bigg] 
\leq \frac 1 p \e^{o(T)} \frac{[q(\alpha)]^{c T}}{1-q(\alpha)}.
\end{aligned}
$$
But if 
$$
q(\alpha)=\E^{\mu_\alpha\otimes\Pi_\alpha}\big[\e^{-[g(\alpha)-\alpha](|\xi^\prime |+|\mathcal J(\xi)|)}{\mathbf F}(\xi)\big]= \E^{\mathbb Q_\alpha}\big[\e^{-[g(\alpha)-\alpha](|\xi^\prime |+|\mathcal J(\xi)|)}{\mathbf F}(\xi)\big]<1,
$$ 
 the above estimate would imply that $\widehat{\Theta}_{\alpha,T}[ k^\star(T) \geq c T]\to 0$ for any $c>0$, which would contradict \eqref{renewals}. Thus, $q(\alpha)=1$ and consequently, the identity \eqref{eq1} must hold. To show \eqref{eq2} we again recall \eqref{eq3}. Note that by the renewal theorem,\footnote{By the renewal theorem, if the relevant expectation is infinite, then the left hand side of \eqref{eq3} converges to zero.} the left hand side of \eqref{eq3} is the ratio (recall the definition of $L(\alpha)$ from \eqref{eq2} and that $\mu_\alpha$ is exponentially distributed with parameter $\alpha$ on a dormant period $\xi^\prime$)
\begin{align*}
\frac{\E^{\mu_\alpha}[|\xi^\prime|\e^{-[g(\alpha)-\alpha)]|\xi^\prime|}]}{\E^{\mu_\alpha}[\e^{-[g(\alpha)-\alpha)]|\xi^\prime|}]}     \big[ {L(\alpha)} \big]^{-1} 
= \frac{[\alpha/g(\alpha)^2]}{[\alpha/g(\alpha)]} [L(\alpha)]^{-1}=\frac{[1/g(\alpha)]}{L(\alpha)}.
\end{align*}
Since $\frac 1{g(\alpha)}$ is finite, and by \eqref{eq2} the above ratio is strictly positive, 
we must have $L(\alpha)<\infty$, which proves \eqref{eq2}.
\end{proof}

The lower bound on $\Phi$ from the following lemma was used in the Step 1 of the proof of Theorem \ref{thm2.6}. 

\begin{lemma}\label{lemma-sqrt2}
Fix any $\alpha>0$. Then for any active period $\xi$ with $n(\xi)\geq 1$ individuals, 
$$
\E^{\Pi_\alpha}\big[\mathbf F(\xi)\big] \ge \sqrt{2}>1
$$
 \end{lemma}
 \begin{proof}
 we first use the fact that for any symmetric positive definite matrix $M=(m_{ij})$, 
 $\mathrm{det}(M) \leq \prod_{i=1}^n m_{ii}$. In our case $m_{ii}=u_i^2|J_i|=u_i^2\tau_i$ .
 $$
\Phi(\xi,\bar u) \geq \prod_{i=1}^{n(\xi)}\frac{1}{(1+u_i^2\tau_i)^\frac32}
 $$
 where $\tau_1,\dots,\tau_{n}$ are exponentials with parameter $1$.  Since
$$
\int_0^\infty \frac {\d u}{(1+u^2\tau)^\frac{3}{2}}=\frac{1}{\sqrt \tau}
$$
and 
$$
\E^{\Pi_\alpha}\bigg[\frac{1}{\sqrt\tau}\bigg]=\int_0^\infty \frac {\e^{-\tau}}{\sqrt\tau}\d\tau=\sqrt{\pi},
$$
and thus, 
\begin{equation}\label{eq-lemma-sqrt2}
\begin{aligned}
\E^{\Pi_\alpha}\bigg[\frac { (\sqrt{2/\pi})^{n(\xi)}}{\sqrt {\tau_1\cdots\tau_{n(\xi)}}}\bigg]&=\E^{\Pi_\alpha}\bigg[ \bigg(\sqrt{\frac 2\pi}\bigg)^{n(\xi)}\,(\sqrt{\pi})^{n(\xi)}\bigg]\\
&=\E^{\Pi_\alpha}\bigg[\e^{\frac{1}{2}n(\xi)\log 2}\bigg] \geq \sqrt 2 >1,
\end{aligned}
\end{equation}
since $n(\xi)\geq 1$.
\end{proof}

 \section{Identification of the limiting Polaron measure, its mixing properties and the central limit theorem}\label{sec-4}

In this section we will state and prove the three main results  (announced in Section \ref{sec-outline}) concerning the asymptotic behavior of the Polaron measures $\widehat\P_{\alpha,T}$ as $T\to\infty$. 

We fix $\alpha>0$ and in what follows we will write (and recall Theorem \ref{thm2.6})
\begin{equation}\label{eq2-thm2.5}
\begin{aligned}
&\lambda=\lambda(\alpha)= g(\alpha)-\alpha, \qquad\qquad\mbox{with      }g(\alpha) \mbox{     defined in  }\eqref{eq-DV-0},\\
&q(\alpha)=\E^{\Pi_{\alpha}\otimes\mu_\alpha}\bigg[\e^{-\lambda [|{\mathcal J}(\xi)|+|\xi'|]} \mathbf {\bf F}(\xi)\bigg]=1,\\
&L(\alpha)=\E^{\Pi_{\alpha}\otimes\mu_\alpha}\bigg[\e^{-\lambda [|{\mathcal J}(\xi)|+|\xi'|]} \mathbf {\bf F}(\xi) \,\,[|\mathcal J(\xi)|+|\xi^\prime|]\bigg]<\infty.
\end{aligned}
\end{equation}

\bigskip

\begin{theorem}[Identification of the limiting Polaron Measure]\label{thm4}
Fix any $\alpha>0$. On any finite interval $J=[-A,A]$, and on the $\sigma$-field generated  by the differences $\omega(t)-\omega(s)$ 
with $-A \le s< t \le A$,  the restriction ${\widehat \P}_{\alpha,T}^{\ssup J}$ of the Polaron measure ${\widehat \P}_{\alpha,T}$ on $J$ converges in total variation to the restriction to $J$ of 
\begin{equation}\label{eq-Polaron-limit}
\widehat\P_{\alpha}=\int \mathbf P_{\hat \xi, \hat u} (\cdot) \,\, \widehat{\mathbb Q}_\alpha(\d\hat\xi\,\d\hat u).
\end{equation}
In the above expression, $\widehat{\mathbb Q}_\alpha$ is the aforementioned stationary version of the process obtained by alternating the distribution 
\begin{equation}\label{eq-Pi-hat}
\widehat \Pi_{\alpha}(\d\xi\, \d \overline u)= \frac\alpha{\alpha+\lambda} \e^{-\lambda|\mathcal J(\xi)|} \bigg[ \bigg(\sqrt{\frac 2\pi}\bigg)^{n(\xi)}\,\, { \Phi(\xi,\overline u)} \d\bar u\bigg] \,\, \Pi_\alpha(\d\xi),
\end{equation}
on active periods $(\xi,\bar u)$ and the distribution 
\begin{equation}\label{eq-mu-hat}
\widehat\mu_\alpha(\d\xi^\prime)=\frac{\alpha + \lambda}\alpha \e^{-\lambda|\xi^\prime|} \mu_\alpha(\d\xi^\prime)
\end{equation}
on dormant periods $\xi^\prime$, with $\mu_\alpha$ denoting the exponential distribution with parameter $\alpha>0$. 
\end{theorem}

As a corollary to Theorem \ref{thm4}, we have the following central limit theorem that also provides an expression for the variance. 

\begin{theorem}[The central limit theorem for the Polaron]\label{thm5}
Fix any $\alpha>0$, and let $\widehat\nu_{\alpha,T}$ be the distribution of the rescaled increment 
\begin{equation}\label{increment}
\frac{1}{\sqrt {2T}}\big(\omega(T)-\omega(-T)\big)
\end{equation}
 under the finite-volume Polaron measure $\widehat\P_{\alpha,T}$, and let $\widehat\mu_{\alpha,T}$ be the 
 distribution of the same increments \eqref{increment} under the infinite-volume limit $\widehat\P_\alpha=\lim_{T\to\infty}\widehat\P_{\alpha,T}$ defined in \eqref{eq-Polaron-limit}.
 Then, for any $\alpha$ and as $T\to\infty$, both $\widehat\nu_{\alpha,T}$ and $\widehat\mu_{\alpha,T}$ converge to a centered three dimensional Gaussian law with covariance matrix given by $\sigma^2(\alpha)\,I$, where for any unit vector $v\in\R^3$, 
\begin{equation}
\begin{aligned}
\sigma^2(\alpha)&= \lim_{T\to\infty} \frac 1 {2T}\E^{\widehat{\P}_{\alpha,T}}\bigg[\big\langle v, \omega(T)-\omega(-T)\big\rangle^2\bigg]
= \frac{(\lambda+\alpha)^{-1}+ \Gamma(\alpha)}{(\lambda+\alpha)^{-1}
+L(\alpha)}\\
& =\frac{g(\alpha)^{-1}+ \Gamma(\alpha)}{g(\alpha)^{-1}
+L(\alpha)}\in (0,1) 
\end{aligned}
\end{equation}
and 
$$
\begin{aligned}
&\Gamma(\alpha)= \E^{\widehat \Pi_\alpha}\bigg[\E^{\mathbf P_{\xi,\bar u}} \big[\big\langle v, \omega(\sigma^\star)-\omega(0)\big\rangle^2\big]\bigg] \qquad \mbox{and}\,\,\,\,
L(\alpha)=\E^{\widehat{\Pi}_\alpha}\big[|{\mathcal J}(\xi)|\big]
\end{aligned}
$$
where $\mathbf P_{\xi,\bar u}$ is the Gaussian measure defined on increments corresponding to the quadratic form $\mathcal Q_{\xi,\bar u}$ attached to the excursion $(\xi,\bar u)$ of a single active period $\xi$ with time span $\mathcal J(\xi)=[0,\sigma^\star]$ (recall \eqref{eq-Q-xi-u-2}). \end{theorem}

Finally, the following result provides an exponential mixing property of the limiting Polaron measure $\widehat\P_\alpha$. 

\begin{theorem}[The mixing property of the limiting Polaron]\label{thm4.5}
Let us denote by ${\widehat\P}_{\alpha}^{\ssup J}$ the restriction of ${\widehat\P}_\alpha$ to increments in the interval $J$. There exists $\alpha_0\in (0,\infty)$ such that for any $\alpha\in (0,\alpha_0)$ there are constants $c(\alpha),C(\alpha)>0$ such that  restrictions to disjoint sets decay exponentially fast as the distance increases. In other words, for $A>0$, 
$$
\big\|{\widehat \P}_{\alpha}^{(-\infty,-A)\cup(A,\infty)}-{\widehat \P}_{\alpha}^{ (-\infty, -A)} \otimes {\widehat \P}_{\alpha}^{(A,\infty)}\big \|\le C(\alpha)\e^{-c(\alpha)A}
$$
\end{theorem}
\begin{remark}
The proof of Theorem \ref{thm4.5} depends on the expected size $\E^{\widehat\Pi_\alpha}[\e^{a|\mathcal J(\xi)|}]<\infty$ of a cluster having some exponential moment under the tilted birth-death process (see Lemma \ref{lemma-2-thm4.5}). The proof of this exponential moment requires $\alpha$ to be small and can be found in Theorem \ref{thm2} in the Appendix 
\footnote{The argument for Theorem \ref{thm2} depends on an explicit lower bound on the determinant appearing in the function $\Phi$. This lower bound works well as long as $\alpha$ remains small, but it does not seem to be useful when $\alpha$ is large.} While we do not claim to have a proof of an exponential moment of $|\mathcal J(\xi)|$ (resp. exponential mixing of $\widehat\P_\alpha$) for large $\alpha$, it is quite possible that such a statement for any $\alpha$ can be extracted from our method. As we will see in the proof of Theorem \ref{thm5}, the central limit theorem for the increments under $\widehat\P_\alpha$  will only need the ergodic theorem for the recurrent renewal process of dominant and active periods. 
\end{remark}

The proofs of the Theorem \ref{thm4}- Theorem \ref{thm4.5} can be found in Section \ref{sec-proofs-sec-4}. These proofs depend on deriving the law of large numbers for the mixing measure $\widehat\Theta_{\alpha,T}$ for the Polaron measure $\widehat\P_{\alpha,T}$. Section \ref{sec-renewal} is devoted to the derivation of this law of large numbers.

\subsection{Asymptotic behavior of the mixing measure $\widehat\Theta_{\alpha,T}$ as $T\to\infty$.}\label{sec-renewal}

In order to derive limiting assertions for $\widehat\Theta_{\alpha,T}$ we will need  to invoke some arguments based on renewal theory (\cite{F66}) and it is useful to collect  them at this point.

\begin{theorem}\label{renewal}
Let $\{X_i\} $ be a sequence of independent identically distributed real valued random variables with $\P[X_i>0]=1$ and $\E[X_i]=m<\infty$.
Let the distribution of $X_i$ be absolutely continuous with respect to the Lebesgue measure. Let $a\in \mathbb R$ be an arbitrary constant and  $S_n=X_1+\cdots+X_n$ with $S_0=0$.  

\begin{itemize}
\item Then the sequence $a+S_n$ for $n\ge 0$ is a point process. It has a limit as a stationary point process $\mathbb Q$ on $\mathbb R$ as $a\to -\infty$. Its restriction to $[0,\infty)$ can be realized as the point process $\{Y+S_n: n\ge 0\}$ where $Y$ is an independent random variable with density  $\frac{1}{m}\P[X_i\ge x]$. 
\item If we have two sets of mutually independent random variables $\{X_i\},\{Y_i\}$ with $\E[X_i]=m_1$ and $E[Y_i]=m_2$ that alternate, i.e
$$
S_{2n}=a+X_1+Y_1+\cdots+X_n+Y_n,
$$ 
and 
$$
S_{2n+1}=a+X_1+Y_1+\cdots+X_n+Y_n+X_{n+1},
$$ 
then again there is a stationary limit for the point process $S_n$ as $a\to -\infty$. Moreover 
$$
\lim_{a\to -\infty} \P\big[\cup_j \{S_{2j}\le x\le S_{2j+1}\}\big]=\frac{m_1}{m_1+m_2},
$$
and
$$
\lim_{a\to -\infty} \P\big[\cup_j \{S_{2j-1}\le x\le S_{2j}\}\big]=\frac{m_2}{m_1+m_2}.
$$
\item There is also an ergodic theorem. We think of $\mathbb R$ as  being covered by intervals of one type or the other
of random lengths and $f(s)=1$ if $s$ is covered by an $X$, i.e $S_{2n}\le s\le S_{2n+1}$ for some $n$ and $0$ otherwise. Then
for any $a\le0$
$$
\lim_{T\to\infty}\frac{1}{T}\int_0^T f(s)ds=\frac{m_1}{m_1+m_2},
$$
with probability $1$. 
\end{itemize}
\end{theorem}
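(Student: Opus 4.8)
The plan is to derive all three bullets from the classical renewal theorem in the spread-out case, for which the standing hypothesis that $X_i$ has an absolutely continuous law is precisely what is required (absolute continuity makes the inter-arrival distribution spread out, hence non-lattice). First I would recall the equilibrium construction: if $Y$ is independent of $\{X_i\}$ with density $\frac1m\,\P[X_1>x]$ on $(0,\infty)$, then the delayed renewal process $\{Y+S_n:n\ge 0\}$, extended to a two-sided point process on $\R$, is translation invariant; this is the standard computation with the renewal measure, or may be quoted from \cite{F66}. With this in hand, the content of the first bullet is that the origin-shifted processes $\{a+S_n\}_{n\ge 0}$ converge in distribution, as random locally finite point measures on $\R$ (vague topology), to this stationary process as $a\to-\infty$, and that this identifies the limit $\mathbb Q$.

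For that convergence I would run the coupling proof of the renewal theorem. Fix a finite window $[-M,M]$ and construct, on one probability space, both the delayed process started from $a$ and the equilibrium process; since the inter-arrival law is spread out, the two renewal sequences can be coupled so as to coincide from some almost surely finite renewal epoch onward. Sending $a\to-\infty$ pushes this coupling epoch to $-\infty$ relative to $[-M,M]$, so the restrictions of the two point processes to $[-M,M]$ agree with probability tending to $1$; since $M$ is arbitrary this yields vague convergence and exhibits the stationary process as the limit, proving the first bullet.

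The second bullet follows by embedding the alternating picture into an ordinary renewal process with inter-arrival times $Z_i=X_i+Y_i$, mean $m_1+m_2<\infty$ and again spread-out law, where the $i$-th cycle $[S_{2(i-1)},S_{2i}]$ consists of an $X$-phase of length $X_i$ followed by a $Y$-phase of length $Y_i$; the existence of the stationary limit as $a\to-\infty$ is the first bullet applied to $\{Z_i\}$ together with the phase labels. Applying the key renewal theorem to the renewal equation for $u(t)=\P[E_t]$, where $E_t$ is the event that $t$ lies in an $X$-phase (the defective component being produced by the $X$-phase), gives $u(t)\to\E[X_1]/\E[Z_1]=m_1/(m_1+m_2)$, equivalently this probability is exactly $m_1/(m_1+m_2)$ under the stationary initial delay; the $Y$-phase statement is the mirror image. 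For the third bullet I would invoke the renewal-reward strong law: with $N(T)=\#\{n:S_{2n}\le T\}$ one has $N(T)/T\to(m_1+m_2)^{-1}$ almost surely by the SLLN, and $\int_0^T f(s)\,ds$, being the total $X$-phase length inside $[0,T]$, is sandwiched between $\sum_{n\le N(T)}X_n$ and $\sum_{n\le N(T)+1}X_n$ up to a boundary term bounded by $\max_{n\le N(T)+1}Z_n=o(T)$; dividing by $T$ and combining the SLLN for $\sum X_n$ with $N(T)/T\to(m_1+m_2)^{-1}$ gives the limit $m_1/(m_1+m_2)$, the finite transient caused by the shift $a\le 0$ being washed out in the Ces\`aro average.

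The step I expect to be the main obstacle is the first bullet: pinning down the mode of convergence of the shifted point processes and making the coupling construction deliver a genuinely $\R$-translation-invariant limit. Once that is set up cleanly, the alternating renewal statement and the ergodic theorem are routine consequences of the key renewal theorem and the renewal-reward strong law, respectively.
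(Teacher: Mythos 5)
Your proposal is correct, but there is nothing in the paper to compare it against line by line: the paper does not prove Theorem \ref{renewal} at all, it simply collects these statements as classical renewal-theoretic facts and refers to Feller \cite{F66}. Your write-up supplies exactly the standard arguments that such a citation implicitly invokes: absolute continuity of the law of $X_i$ makes the inter-arrival distribution spread out, so the coupling proof of Blackwell's renewal theorem gives total-variation (hence vague) convergence of the shifted point process $\{a+S_n\}$ on any fixed window $[-M,M]$ to the equilibrium (stationary-delay) process as $a\to-\infty$, which is the first bullet; the alternating statement follows by viewing $Z_i=X_i+Y_i$ (again absolutely continuous, hence spread out) as the cycle length and applying the key renewal theorem to $u(t)=\P[t\ \text{lies in an } X\text{-phase}]$, or equivalently reading off the phase probability $m_1/(m_1+m_2)$ under the stationary delay; and the ergodic statement is the renewal--reward strong law, with your sandwich $\sum_{n\le N(T)}X_n\le\int_0^T f\le\sum_{n\le N(T)+1}X_n+O(1)$, the bound $\max_{n\le N(T)+1}Z_n=o(T)$ a.s., and $N(T)/T\to(m_1+m_2)^{-1}$ a.s.\ washing out the finite shift $a\le 0$. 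All three steps are sound, and your identification of the first bullet (mode of convergence and translation invariance of the two-sided limit) as the only non-routine point is accurate; the marks $(\xi_n,\bar u_n)$ and phase labels ride along in the coupling without extra work, which is what the paper needs in \eqref{ren-lim}.
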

There is a modified version of the above renewal theorem that is relevant to us. 
Recall \eqref{eq2-thm2.5} and also from \eqref{eq-Pi-hat} and \eqref{eq-mu-hat} 
the tilted measures $\widehat\Pi_\alpha$ on active periods $(\xi,\bar u)$ and $\widehat\mu_\alpha$ on dormant periods $\xi^\prime$.

Then $\widehat\Pi_\alpha$ on active clusters    
 provides the  distribution of $|{\mathcal J}(\xi)|$ as well as  the conditional distribution $\nu(X,d\xi)$ of $\xi$ given $|{\mathcal J}(\xi)|=X$, 
 while $\widehat\mu_\alpha$ provides  the distribution of the length $|\xi^\prime|$ of a dormant period. We have the stationary version of the renewal process with alternating active and dormant intervals. Associated with each active period we have the conditional distribution $\nu(X,d\xi)$ of $\xi\in \mathcal Y$ of the birth and death history of the process during the period given its duration and the conditional distribution $\beta(\xi, d{\bar u})$ on $(0,\infty)^{n(\xi)}$ given by the density {$\frac{1}{{{\bf F}}(\xi)}\big(\frac{2}{\pi}\big)^{n(\xi)/2}{\Phi}(\xi, {\bar u}) \d\bar u$}. Like before, we then have convergence as the starting time $a\to -\infty$ to the stationary measure $\widehat{\mathbb Q}_\alpha$ that can be viewed as the distribution of a stationary renewal process with alternating active and dormant intervals and random variables $(\xi, \bar u)$ associated with each active interval with distributions given by the conditionals $\nu( X, d\xi)$ and $\beta(\xi,{\bar u})$. In our context, the ergodic theorem in Theorem \ref{renewal} also translates as follows: $X_n$
 and $Y_n$ are active and dormant intervals with $(\xi_n, u_n)$ associated with $X_n=|{\mathcal J}(\xi_n)|$. With $S_n=a+X_1+Y_1+\cdots+X_n+Y_n$, for any $a\le 0$,
 
\begin{equation}\label{ren-lim}
\begin{aligned}
\lim_{T\to\infty} \frac{1}{T}\sum_{n\atop 0\le S_n\le T} g(\xi_n,{\bar u}_n,Y_n)=\frac{\E[g(\xi,{\bar u,Y})]}{\E[X+Y]}
&=\frac{\E\big[\int g(\xi,{\bar u},Y)\widehat\Pi_\alpha(\d\xi)\beta(\xi, \d{\bar u})\big]}{\E[X]+\E[Y]} \\
&=\frac{\int g(\xi, {\bar u}, s)\widehat\Pi_\alpha (\d\xi)\beta(\xi,\d{\bar u})\widehat\mu_\alpha(ds)}{\E[X]+\E[Y]},
\end{aligned}
\end{equation}
with probability $1$.


\begin{lemma}\label{lemma-lln-single}
Let $\Pi_{\alpha,T}$ be the law of the law of birth death process on a $(\xi,\bar u)$ defined in Section \ref{sec-Q-alpha-T}. Then the normalization  constant
$$
{{\widehat Z(\alpha, T)=\E^{\Pi_\alpha,T}\big[\e^{-\lambda |{\mathcal J}(\xi)|}{\bf F}(\xi) \big]}}
$$
converges to
$$
\widehat Z(\alpha)=\E^{\Pi_\alpha}\big[\e^{-\lambda  |{\mathcal J}(\xi)|}{\bf F}(\xi) \big]=\frac{\lambda+\alpha}{\alpha},
$$
and the corresponding tilted measures $\widehat\Pi_{\alpha,T}(\d\xi\,\,\d\bar u)= \frac 1 {\widehat Z_{\alpha,T}} \,\, \e^{-\lambda |{\mathcal J}(\xi)|} \,\, [\Phi(\xi,\bar u)\,\,\d\bar u]\,\, \Pi_{\alpha,T} (\d\xi)$ converge in variation to $\widehat\Pi_{\alpha}$ on
the space $\mathcal Y$.
\end{lemma}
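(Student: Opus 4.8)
The plan is to separate the elementary identity for $\widehat Z(\alpha)$ from the two limit statements, the latter being deduced from the total variation convergence $\Pi_{\alpha,T}\to\Pi_\alpha$ of Remark \ref{rmk-bd-3} together with a uniform‑in‑$T$ integrability estimate supplied by the trimming coupling of Remark \ref{rmk-bd-1}. For the value of $\widehat Z(\alpha)$, observe that under $\Pi_\alpha\otimes\mu_\alpha$ the active period $\xi$ and the dormant length $|\xi^\prime|$ are independent and $\E^{\mu_\alpha}[\e^{-\lambda|\xi^\prime|}]=\alpha/(\alpha+\lambda)$, so the relation $q_\alpha(\lambda)=1$ from \eqref{eq2-thm2.5} factorizes,
\[
1=q_\alpha(\lambda)=\E^{\Pi_\alpha}\big[\e^{-\lambda|\mathcal J(\xi)|}\,\mathbf F(\xi)\big]\cdot\frac{\alpha}{\alpha+\lambda}=\widehat Z(\alpha)\cdot\frac{\alpha}{\alpha+\lambda},
\]
whence $\widehat Z(\alpha)=(\alpha+\lambda)/\alpha<\infty$.

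To prove $\widehat Z(\alpha,T)\to\widehat Z(\alpha)$ I would realize $\Pi_{\alpha,T}$ and $\Pi_\alpha$ on one probability space via the coupling of Remark \ref{rmk-bd-1}: the excursion $\xi_T$ governed by $\Pi_{\alpha,T}$ is obtained from a $\Pi_\alpha$-excursion $\xi$ by deleting the records of the individuals whose lifespan intervals are not contained in $[-T,T]$, so that $\xi_T\subseteq\xi$ and $\xi_T=\xi$ off an event $B_T\subseteq\{|\mathcal J(\xi)|>T\}$ with $\Pi_\alpha(B_T)\to0$ (active periods being a.s.\ finite). Writing $\Psi_T:=\e^{-\lambda|\mathcal J(\xi_T)|}\mathbf F(\xi_T)$ and using $\Psi_T=\e^{-\lambda|\mathcal J(\xi)|}\mathbf F(\xi)$ on $B_T^c$, one has $\widehat Z(\alpha,T)=\E[\e^{-\lambda|\mathcal J(\xi)|}\mathbf F(\xi)\,\1_{B_T^c}]+\E[\Psi_T\,\1_{B_T}]$; the first term converges to $\widehat Z(\alpha)$ by dominated convergence (the integrand is dominated by the $\Pi_\alpha$-integrable $\e^{-\lambda|\mathcal J(\xi)|}\mathbf F(\xi)$ and $\1_{B_T^c}\uparrow1$), so it remains to show $\E[\Psi_T\1_{B_T}]\to0$. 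In the weak-coupling regime $\alpha\in(0,\alpha_0)$ this is immediate: trimming only shrinks the excursion, so the monotonicity of $\widehat{\mathbf F}$ from Remark \ref{rmk-thm2}, together with $\mathbf F\le\widehat{\mathbf F}$ and $\e^{-\lambda|\mathcal J(\xi_T)|}\le1$, gives $\Psi_T\le\widehat{\mathbf F}(\xi)$, and $\E^{\Pi_\alpha}[\widehat{\mathbf F}(\xi)]<\infty$ by Remark \ref{rmk-thm2}, so dominated convergence applies. In the strong-coupling regime $\widehat{\mathbf F}$ is no longer $\Pi_\alpha$-integrable, and one must instead establish a uniform integrability bound for $\Psi_T$ directly, for instance an estimate $\sup_T\E^{\Pi_{\alpha,T}}[(\e^{-\lambda|\mathcal J(\xi)|}\mathbf F(\xi))^{1+\eta}]<\infty$ for some small $\eta>0$, after which Hölder's inequality and $\Pi_\alpha(B_T)\to0$ finish the argument; such a bound should follow by re-running the Lyapunov/exponential-moment estimates of Section \ref{sec-3} for the time-inhomogeneous kernel $\Pi_{\alpha,T}$, with constants independent of $T$, because the birth rate $b_{\alpha,T}\le\alpha$ and death rate $d_{\alpha,T}\ge1$ of $\Pi_{\alpha,T}$ are at least as subcritical as those of $\Pi_\alpha$, so the Lyapunov function $u(n)=c^n(n!)^{\beta}$ used in the proofs of Theorems \ref{thm2} and \ref{thm2.5} continues to work.

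For the convergence of the tilted measures, put $\rho(\xi,\bar u)=\e^{-\lambda|\mathcal J(\xi)|}(\sqrt{2/\pi})^{n(\xi)}\Phi(\xi,\bar u)$, so that $\int\rho(\xi,\cdot)\,\d\bar u=\e^{-\lambda|\mathcal J(\xi)|}\mathbf F(\xi)$ and $\widehat\Pi_{\alpha,T}=\widehat Z(\alpha,T)^{-1}\rho\cdot(\Pi_{\alpha,T}\otimes\d\bar u)$, $\widehat\Pi_\alpha=\widehat Z(\alpha)^{-1}\rho\cdot(\Pi_\alpha\otimes\d\bar u)$ (which is \eqref{eq-Pi-hat}). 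Then
\[
\big\|\widehat\Pi_{\alpha,T}-\widehat\Pi_\alpha\big\|\;\le\;\Big|\frac1{\widehat Z(\alpha,T)}-\frac1{\widehat Z(\alpha)}\Big|\,\widehat Z(\alpha)\;+\;\frac1{\widehat Z(\alpha,T)}\,\big\|\rho\cdot\big[(\Pi_{\alpha,T}-\Pi_\alpha)\otimes\d\bar u\big]\big\|,
\]
where the first term vanishes by the previous step. For the second term, the same coupling shows that for bounded measurable $f$ on $\mathcal Y$ with $\|f\|_\infty\le1$ the signed integral $\int f\rho\,\d(\Pi_{\alpha,T}\otimes\d\bar u)-\int f\rho\,\d(\Pi_\alpha\otimes\d\bar u)$ equals $\E\big[\big(\int f(\xi_T,\bar u)\rho(\xi_T,\bar u)\,\d\bar u-\int f(\xi,\bar u)\rho(\xi,\bar u)\,\d\bar u\big)\1_{B_T}\big]$, whose absolute value is at most $\E\big[(\Psi_T+\e^{-\lambda|\mathcal J(\xi)|}\mathbf F(\xi))\,\1_{B_T}\big]\to0$ by the estimates of the preceding paragraph; taking the supremum over such $f$ gives $\|\rho\cdot[(\Pi_{\alpha,T}-\Pi_\alpha)\otimes\d\bar u]\|\to0$, hence $\widehat\Pi_{\alpha,T}\to\widehat\Pi_\alpha$ in variation on $\mathcal Y$. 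The one genuinely technical point, which I expect to be the main obstacle, is the uniform-in-$T$ integrability of $\Psi_T$ in the strong-coupling regime: there the convenient monotone majorant $\widehat{\mathbf F}$ is unavailable and the Section \ref{sec-3} estimates must be carried out afresh for the inhomogeneous kernel $\Pi_{\alpha,T}$ with constants free of $T$; all the remaining steps are routine manipulations with the coupling and dominated convergence.
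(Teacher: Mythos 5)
Your computation of $\widehat Z(\alpha)=(\alpha+\lambda)/\alpha$ from $q_\alpha(\lambda)=1$ and the independence of $\xi$ and $|\xi^\prime|$ under $\Pi_\alpha\otimes\mu_\alpha$ is correct, and your reduction of both limit statements to a uniform-in-$T$ integrability of $\e^{-\lambda|\mathcal J(\xi)|}\mathbf F(\xi)$ under $\Pi_{\alpha,T}$ (using the trimming coupling and the total variation convergence $\Pi_{\alpha,T}\to\Pi_\alpha$) is exactly the reduction the paper makes; the weak-coupling case via the monotone majorant $\widehat{\mathbf F}$ of Remark \ref{rmk-thm2} also works. But the strong-coupling regime is a genuine gap, which you flag without closing: the bound $\sup_T\E^{\Pi_{\alpha,T}}\big[(\e^{-\lambda|\mathcal J(\xi)|}\mathbf F(\xi))^{1+\eta}\big]<\infty$ does not follow by ``re-running'' the estimates of Section \ref{sec-3}. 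Raising the power turns the factors $\delta_i^{-1/2}$ into $\delta_i^{-(1+\eta)/2}$, so the conditional per-death factor becomes of order $(n+\alpha)\,(n+\alpha+(1+\eta)\lambda)^{-(1-\eta)/2}$ instead of $(n+\alpha)(n+\alpha+\lambda)^{-1/2}$, and the Lyapunov function has to be upgraded to $u(n)=c^n(n!)^{(1+\eta)/2}$; whether the resulting per-step inequality holds at the \emph{specific} $\lambda=\lambda(\alpha)$ determined by $q_\alpha(\lambda)=1$ is not automatic, since for large $\alpha$ the only information about this $\lambda$ is $\lambda\ge\lambda_0(\alpha)=3\sqrt 2\,\alpha^{3/2}$ together with $q_\alpha(\lambda)=1$, which carries no quantitative margin for strengthened moments. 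Nor can you interpolate from finiteness of $q_\alpha$ at a slightly smaller $\lambda^\prime$: the extra power sits on $\mathbf F$, which blows up through small $\delta_i$'s without any compensating growth of $|\mathcal J(\xi)|$, so $\e^{-(1+\eta)\lambda|\mathcal J(\xi)|}\mathbf F(\xi)^{1+\eta}$ is not dominated by $\e^{-\lambda^\prime|\mathcal J(\xi)|}\mathbf F(\xi)$ times anything bounded.

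The paper closes this step by a softer argument that covers both regimes simultaneously and needs no new moment estimate. Because $q_\alpha(\lambda)=1$, one can build the tilted renewal process $\widehat\Theta_\alpha$ by alternating $\widehat\Pi_\alpha$ and $\widehat\mu_\alpha$, and observe that the $T$-constrained object is, up to normalization, the restriction of $\widehat\Theta_\alpha$ to the event $E_T=\{N(T)=0\}$; $\widehat\Pi_{\alpha,T}$ is then its first-cluster marginal. By the renewal theorem $q(T)=\widehat\Theta_\alpha(E_T)\to q>0$, so the densities of these restricted, renormalized measures with respect to the fixed measure $\widehat\Theta_\alpha$ are bounded by $1/q(T)\le C$ uniformly for large $T$; since $\e^{-\lambda|\mathcal J(\xi)|}\mathbf F(\xi)$ is integrable with respect to $\widehat\Theta_\alpha$, it is automatically uniformly integrable with respect to the family $\{\widehat\Pi_{\alpha,T}\}$, and your remaining manipulations (convergence of $\widehat Z(\alpha,T)$ and of the tilted measures in total variation) go through. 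If you wish to keep your coupling framework, replace the $(1+\eta)$-moment step by this conditioning bound; as written, the strong-coupling half of the lemma remains unproved.
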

\begin{proof}
Since $\Pi_{\alpha,T}$ converges in variation on $\mathcal Y$ to $ \Pi_{\alpha}$ as $T\to\infty$, the requisite convergence is a question of uniform integrability of $\e^{-\lambda|{\mathcal J}(\xi)|}{\bf F}(\xi)$ with respect to $\Pi_{\alpha,T}$ as $T\to\infty$. Let $\widehat\Theta_\alpha$ be 
the renewal process on $[0,\infty)$ starting with $\widehat \Pi_\alpha$ and alternating with $\widehat\mu_\alpha$. 
$\overline\Theta_{\alpha,T}$ is defined as the restriction of $\widehat\Theta_\alpha$ to the event $E_T=\{N(T)=0\}$$^{\dagger}$\footnotetext{$\dagger$ At any given time $t$, $N(t)$ is the current population size and 
the requirement $N(T)=0$
characterizes the dormant period and is equivalent to saying that there is no interval $[s,t]$ in our realization of the Poisson process that contains the point $T$.} 
normalized by  $q(T)=\widehat\Theta_\alpha(E_T)$. As $T\to\infty $, by Theorem \ref{renewal}, $\lim_{T\to\infty}q(T)=q>0$
exists and $q=\widehat{\mathbb Q}_\alpha (N(0)=0)$, where $\widehat{\mathbb Q}_\alpha$ is the stationary version of $\widehat\Theta_\alpha$. Since $q(T)$ is bounded below and $\e^{-\lambda|{\mathcal J}(\xi)|}{\bf F}(\xi)$ is integrable with respect to $\widehat\Theta_\alpha$, it is uniformly integrable with respect to $\overline\Theta_{\alpha,T}$ and hence with respect to $\widehat\Pi_{\alpha,T}$,     which is the  restriction  of  $\widehat\Theta_{\alpha,T}$ to the first cluster.

\end{proof}
\begin{remark}
The distribution of $\Pi_{\alpha, T}$ as well as that of  $\widehat\Pi_{\alpha,T}$ depend on the starting time of the cluster and $T$ has to be interpreted as the time remaining or $T-\sigma^\ast$, where $\sigma^\ast$ is the starting time of the cluster.
\end{remark}

Recall from the proof of Lemma \ref{lemma-lln-single} that $\widehat\Theta_\alpha$ is
the renewal process on $[0,\infty)$ starting with $\widehat \Pi_\alpha$ and alternating with $\widehat\mu_\alpha$, $\widehat{\mathbb Q}_\alpha$ is its stationary version, and $E_T$ is the event $N(T)=0$.
\begin{lemma}
If we set $\widehat\rho (\alpha, T)=\widehat\Theta_\alpha (E_T)$, 
the limit
$$
\lim_{T\to\infty}\widehat\rho(\alpha, T)
=\widehat\rho(\alpha)>0
$$
exists. In particular
$$
\lim_{T_1,T_2 \to\infty} \frac{\widehat\rho(\alpha,T_1)}{\widehat\rho(\alpha, T_2)}=1.
$$
\end{lemma}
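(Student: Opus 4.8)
The plan is to recognize $\widehat\rho(\alpha,T)=\widehat\Theta_\alpha(E_T)$ as the probability that the one--sided alternating renewal process $\widehat\Theta_\alpha$ --- which starts at time $0$ with an active period of law $\widehat\Pi_\alpha$ and then alternates dormant periods of law $\widehat\mu_\alpha$ with fresh active periods of law $\widehat\Pi_\alpha$ --- occupies a dormant period at time $T$; this is exactly the quantity $q(T)$ met in the proof of Lemma \ref{lemma-lln-single}, whose convergence is now to be recorded explicitly. First I would fix notation for the successive active and dormant lengths $A_1,D_1,A_2,D_2,\dots$ of $\widehat\Theta_\alpha$: these are mutually independent, the $A_j$ are i.i.d.\ with $\E[A_1]=\E^{\widehat\Pi_\alpha}[|\mathcal J(\xi)|]=L(\alpha)<\infty$ by Theorem \ref{thm2.6}, and the $D_j$ are i.i.d.\ with law $\widehat\mu_\alpha$, which is the exponential distribution of rate $\alpha+\lambda$, so $\E[D_1]=(\alpha+\lambda)^{-1}$. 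Consequently the cycle length $A_1+D_1$ has finite mean $L(\alpha)+(\alpha+\lambda)^{-1}$, and its law is absolutely continuous (since $D_1$'s is), hence non-lattice, which puts us squarely in the setting of the alternating renewal statement of Theorem \ref{renewal}.

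Next I would set up the renewal equation by conditioning on the first cycle: with $F$ the law of $A_1+D_1$ and $g(T)=\P(A_1\le T<A_1+D_1)$, the event ``dormant at $T$'' either occurs within the first dormant period or after the first renewal, giving
\begin{equation*}
\widehat\rho(\alpha,T)=g(T)+\int_{[0,T]}\widehat\rho(\alpha,T-u)\,F(\d u),\qquad\int_0^\infty g(T)\,\d T=\E[D_1]=(\alpha+\lambda)^{-1}.
\end{equation*}
The key renewal theorem then yields $\widehat\rho(\alpha):=\lim_{T\to\infty}\widehat\rho(\alpha,T)=\big(\alpha+\lambda\big)^{-1}\big/\big(L(\alpha)+(\alpha+\lambda)^{-1}\big)$, which is strictly positive and coincides with the value $\widehat{\mathbb Q}_\alpha(N(0)=0)$ identified in the proof of Lemma \ref{lemma-lln-single}. (Equivalently, one may quote Theorem \ref{renewal} directly with $m_1=L(\alpha)$, $m_2=(\alpha+\lambda)^{-1}$, after observing that the process started at $0$ and read at time $T$ has the same law as the process started at $-T$ and read at $0$, so that the limit $T\to\infty$ here is the limit $a\to-\infty$ of the stationary--type quantity there.) Once the limit is known to be positive, the ``in particular'' claim is immediate: $\widehat\rho(\alpha,T_1)/\widehat\rho(\alpha,T_2)\to\widehat\rho(\alpha)/\widehat\rho(\alpha)=1$ as $T_1,T_2\to\infty$.

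The only point demanding genuine work --- and the step I expect to be the main obstacle --- is checking that $g$ is directly Riemann integrable, i.e.\ controlling its tail. For this I would use that under $\widehat\Pi_\alpha$ the active length has an exponentially small tail: since $\widehat\Pi_\alpha(\d\xi)=\frac{\alpha}{\alpha+\lambda}\,\e^{-\lambda|\mathcal J(\xi)|}\,\mathbf F(\xi)\,\Pi_\alpha(\d\xi)$ (after integrating out $\bar u$), we get $\widehat\Pi_\alpha[|\mathcal J(\xi)|>t]=\frac{\alpha}{\alpha+\lambda}\,\E^{\Pi_\alpha}\big[\e^{-\lambda|\mathcal J(\xi)|}\mathbf F(\xi)\,\1_{|\mathcal J(\xi)|>t}\big]\le C\,\e^{-ct}$, the last inequality because $\E^{\Pi_\alpha}[\e^{-\lambda'|\mathcal J(\xi)|}\mathbf F(\xi)]<\infty$ for some $\lambda'<\lambda$ (for small $\alpha$ this is contained in \eqref{eq-length-expmom}, and in general $q(\cdot)$ is finite slightly below its critical argument, cf.\ the proof of Theorem \ref{thm2.6}). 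Combining this with the exponential tail of $D_1$ and splitting $g(T)=\P(A_1\le T,\,D_1>T-A_1)$ over $\{A_1\le T/2\}$ and its complement shows $g(T)$ decays exponentially in $T$, so $\sum_n\sup_{[n,n+1]}g<\infty$ and $g$ is directly Riemann integrable, which is all the key renewal theorem requires.
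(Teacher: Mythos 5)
Your proposal is correct and follows essentially the same route as the paper: the paper simply notes the absolute continuity of the interval distributions and invokes its alternating renewal theorem (Theorem \ref{renewal}) to get $\widehat\rho(\alpha,T)\to\widehat{\mathbb Q}_\alpha(E_0)=m_2/(m_1+m_2)>0$, which is exactly the limit value you derive. Your extra steps (the renewal equation, the direct Riemann integrability check via the exponential tail of $|\mathcal J(\xi)|$ under $\widehat\Pi_\alpha$) just make explicit what the paper leaves to the cited renewal theorem, and they are consistent with estimates the paper itself uses elsewhere (e.g.\ \eqref{eq-length-expmom} and Lemma \ref{lemma-2-thm4.5}).
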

\begin{proof}
We start the renewal process at time $0$. Our distributions are all easily seen to be absolutely continuous. We can apply the 
renewal theorem and conclude that
$$
\lim_{T\to\infty} \widehat\rho(\alpha,T)=\lim_{T\to\infty} \widehat\Theta_\alpha (E_T)=\widehat{\mathbb Q}_\alpha(E_0)=\widehat\rho(\alpha)>0
$$

\end{proof}

We are ready to state the law of large numbers for $\widehat\Theta_{\alpha,T}$.

\begin{theorem}[Asymptotic behavior of mixing measure $\widehat\Theta_{\alpha,T}$ as $T\to\infty$.]\label{thm6}
Let $\widehat\Theta_{\alpha,T}$ be the mixing measure for the Polaron measure defined in \eqref{eq-Theta-hat-T}, while $\widehat\Theta_\alpha$ is
the renewal process on $[0,\infty)$ starting with $\widehat \Pi_\alpha$ and alternating with $\widehat\mu_\alpha$, while $\widehat{ \mathbb Q}_\alpha$ is the stationary version of the process $\widehat \Theta_\alpha$. 
Let $[T_1,T_2]\subset [0,T]$ be an interval such that
$T_1\to\infty$ and $T-T_2\to \infty$. Then the total variation $\|\widehat {\mathbb Q}_\alpha-\widehat \Theta_{\alpha, T}\|$ on the $\sigma$-field of all excursions in $[T_1, T_2]$ tends to $0$.

\end{theorem}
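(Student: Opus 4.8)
The plan is to exploit the product structure of the mixing measure $\widehat\Theta_{\alpha,T}$ exhibited in \eqref{eq-Theta-hat-T} together with the renewal comparison already set up in Lemma \ref{lemma-lln-single} and the lemmas that follow it. The key observation is that, conditioned on the event $E_{T_1}\cap E_{T_2}=\{N(T_1)=0,\ N(T_2)=0\}$ that both endpoints $T_1$ and $T_2$ fall in dormant periods (so that no cluster straddles either endpoint), the restriction of $\widehat\Theta_{\alpha,T}$ to the $\sigma$-field of excursions in $[T_1,T_2]$ factorizes: the piece in $[-T,T_1]$, the piece in $[T_1,T_2]$, and the piece in $[T_2,T]$ become independent, because the Poisson process on disjoint intervals is independent, the Gaussian measures $\mathbf P_{\xi,\bar u}$ on disjoint clusters are independent (recall \eqref{eq-P-xi-u-independence} and \eqref{eq-P-xi-u-2}), and the exponential tilting $\e^{-\lambda(\cdot)}$ together with the $\prod_r$ structure in \eqref{eq-Theta-hat-T} is multiplicative across clusters and dormant periods. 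On this event the middle factor is precisely a renewal process that starts fresh at $T_1$ with birth rate $b_{\alpha,T}$ and death rate $d_{\alpha,T}$ (the $T$-dependent process $\Pi_{\alpha,T}$ of Section \ref{sec-Q-alpha-T}, run from remaining time $T-T_1$ down to $T-T_2$), tilted by $\e^{-\lambda|\mathcal J(\xi)|}\mathbf F(\xi)$ on each active period and $\e^{-\lambda|\xi'|}$ on each dormant period, and conditioned to be dormant at $T_2$.

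First I would make this factorization precise and reduce the theorem to two statements: (i) the conditioning event $E_{T_1}\cap E_{T_2}$ has probability bounded away from $0$ as $T_1\to\infty$ and $T-T_2\to\infty$ (in fact it converges to $\widehat\rho(\alpha)^2$ by the preceding lemma and the renewal theorem, since the endpoints decouple); and (ii) the conditional law of the middle piece, under $\widehat\Theta_{\alpha,T}$, converges in total variation on excursions in $[T_1,T_2]$ to the restriction of the stationary process $\widehat{\mathbb Q}_\alpha$ to that interval. For (ii) I would use Remark \ref{rmk-bd-3}: the $T$-dependent birth/death rates $b_{\alpha,T},d_{\alpha,T}$ agree with the constant rates $\alpha,1$ as long as the remaining time is large, and $T-T_1\to\infty$, $T-T_2\to\infty$ by hypothesis, so on $[T_1,T_2]$ the untilted law $\Pi_{\alpha,T}$ is close in total variation to $\Pi_\alpha$; the tilting factors are the same; and the uniform integrability established in Lemma \ref{lemma-lln-single} (via integrability of $\e^{-\lambda|\mathcal J(\xi)|}\mathbf F(\xi)$ against $\widehat\Theta_\alpha$, plus the exponential moment bound \eqref{eq-length-expmom}) upgrades this to total-variation convergence of the tilted, conditioned law to the stationary $\widehat{\mathbb Q}_\alpha$.

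Concretely the steps are: (1) write $\widehat\Theta_{\alpha,T}$ restricted to $[T_1,T_2]$ as a mixture over the position of the last dormant period before $T_1$ and the first after $T_2$, and show the contribution of realizations for which $N(T_1)\ne 0$ or $N(T_2)\ne 0$ is negligible --- this uses that a cluster has exponentially small probability of having length exceeding any fixed scale, which follows from \eqref{eq-length-expmom}, so the chance that $T_1$ (resp. $T_2$) is covered by a long cluster is controlled; (2) on the complementary event, invoke independence across the three blocks and identify the middle block's law with the tilted $T$-dependent renewal process conditioned to be dormant at $T_2$; (3) apply the renewal theorem (Theorem \ref{renewal}) and Lemma \ref{lemma-lln-single} to pass to the $T\to\infty$ limit, getting the stationary renewal process $\widehat{\mathbb Q}_\alpha$ on $[T_1,T_2]$; (4) since $T_1\to\infty$ as well, the convergence is to the fully stationary two-sided process, and translation invariance of $\widehat{\mathbb Q}_\alpha$ lets us compare on the fixed-label $\sigma$-field of excursions in $[T_1,T_2]$. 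I expect the main obstacle to be step (1): controlling uniformly in $T$ the total-variation error coming from the $T$-dependence of the birth/death rates near the right endpoint and from clusters straddling $T_1$ or $T_2$; this is where one genuinely needs the exponential-moment bound \eqref{eq-length-expmom} on cluster lengths together with Remark \ref{rmk-bd-3}, rather than mere weak convergence, to get a quantitative total-variation estimate that vanishes as $\min(T_1,T-T_2)\to\infty$.
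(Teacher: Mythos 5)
There is a genuine gap, and it sits exactly where you place the main weight of the argument: the treatment of the endpoints $T_1,T_2$. You propose to condition on $E_{T_1}\cap E_{T_2}=\{N(T_1)=0,N(T_2)=0\}$, prove convergence of the conditional law of the middle block, and dispose of the complementary event. But as you yourself note in (i), $\widehat\Theta_{\alpha,T}(E_{T_1}\cap E_{T_2})$ converges to a constant that is strictly less than $1$ (under the stationary law a point is covered by a cluster with probability bounded away from $0$), so the complementary event is \emph{not} negligible; your step (1) contradicts your own (i). The exponential bound \eqref{eq-length-expmom} only controls the probability that $T_1$ or $T_2$ is covered by a \emph{long} cluster, not that it is covered at all. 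Moreover claim (ii) is itself not correct as stated: conditioning on dormancy at $T_1$ biases the configuration near $T_1$ by an order-one amount on the very $\sigma$-field of excursions contained in $[T_1,T_2]$ (e.g.\ the onset of the first cluster fully inside $[T_1,T_2]$ is exponential of rate $\alpha+\lambda$ under the conditioned law, but under the unconditioned stationary law it is a nontrivial mixture involving the residual length of a covering cluster), so the conditional law converges to $\widehat{\mathbb Q}_\alpha$ \emph{conditioned} on $E_{T_1}\cap E_{T_2}$, not to $\widehat{\mathbb Q}_\alpha$ itself, and the two differ in total variation by an amount bounded away from zero. Hence the proposed reduction neither yields the unconditional statement of Theorem \ref{thm6} nor identifies the right limit.

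The paper avoids this by conditioning only at the far endpoint $T$, where it is not an approximation but an exact identity: $\widehat\Theta_{\alpha,T}(A)=[\widehat\Theta_\alpha(E_T)]^{-1}\widehat\Theta_\alpha(A\cap E_T)$, i.e.\ the finite-$T$ mixing measure is the infinite renewal process $\widehat\Theta_\alpha$ conditioned to be dormant at $T$. On the stopped $\sigma$-field $\mathcal F_{\tau(T_2)}$, with $\tau(T_2)$ the first dormant time after $T_2$, the regeneration property makes the Radon--Nikodym derivative equal to $\widehat\rho(\alpha,T-\tau(T_2))/\widehat\rho(\alpha,T)$; since $\tau(T_2)-T_2$ has exponential tails and $\widehat\rho(\alpha,\cdot)\to\widehat\rho(\alpha)>0$ (the ratio lemma preceding the theorem), this derivative is close to $1$ with high probability and uniformly bounded, giving total-variation closeness of $\widehat\Theta_{\alpha,T}$ and $\widehat\Theta_\alpha$ on $[0,T_2]$ when $T-T_2\gg1$. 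The left endpoint $T_1$ is then handled not by conditioning at all but by the standard renewal theorem (Theorem \ref{renewal}): $\|\widehat{\mathbb Q}_\alpha-\widehat\Theta_\alpha\|$ on $[T_1,\infty)$ is small when $T_1\gg1$. Your ingredients (regeneration structure, uniform integrability from Lemma \ref{lemma-lln-single}, comparison of the $T$-dependent rates as in Remark \ref{rmk-bd-3}) are the right ones, but to close the argument you should replace the conditioning at $T_1,T_2$ by this exact conditioning at $T$ together with the RN-derivative/ratio estimate, or otherwise prove asymptotic independence of the middle $\sigma$-field from the endpoint events, which your current plan does not supply.
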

\begin{proof}
The proof is carried out in two steps. First let us compare $\widehat\Theta_{\alpha, T}$ with $\widehat\Theta_\alpha$ on the $\sigma$-field $\mathcal F_{\tau(T_2)}$ where $\tau(s)=\inf \{t: t\ge s, N(t)=0\}$ is the  first time after $s$ that the population size is $0$. If $T-T_2$ is large then we can find $C(T)\to \infty$ with $T$ such that  the event $\tau(T_2)\ge T-C(T)$ has small probability under both $\widehat \Theta_{\alpha}$ and $\widehat \Theta_{\alpha,T}$.  We also have
$$
\widehat\Theta_{\alpha,T}(A)=[\widehat\Theta_\alpha (E_T)]^{-1} \widehat\Theta_\alpha (A\cap E_T)
$$ 
Therefore the Radon-Nikodym derivative $\frac{d\widehat\Theta_{\alpha,T}}{d\widehat\Theta_\alpha}=\frac{\widehat \rho(\alpha, T-\tau(T_2))}{\widehat \rho(\alpha, T)}$ on $\tau(T_2) < T$  is nearly $1$ with high probability. Since $\widehat\rho$ has a nonzero limit 
that is bounded by $1$, the ratio is never large, making $\|\widehat \Theta_{\alpha,T}-\widehat\Theta_\alpha\|$ small on $[0, T_2]$ if $T-T_2\gg 1$. On the other hand according to standard renewal theorem $\|\widehat{\mathbb Q}_\alpha-\widehat\Theta_\alpha\|$ is small on $[T_1,\infty]$ if $T_1\gg1$. 
\end{proof}


\subsection{Proof of Theorem \ref{thm4}, Theorem \ref{thm5} and Theorem \ref{thm4.5}.}\label{sec-proofs-sec-4}

We start with the proof of Theorem \ref{thm4}.

\noindent{\bf{Proof of Theorem \ref{thm4}.}} Theorem \ref{thm4} is a direct consequence of the Gaussian representation proved in Theorem \ref{thm1} and the law of the large numbers for the mixing measure $\widehat\Theta_{\alpha,T}$ provided by Theorem \ref{thm6}.
\qed


\bigskip
We will now prove the central limit theorem. 

\noindent{\bf{Proof of Theorem \ref{thm5}.}}
By Theorem \ref{thm1}, the finite-volume polaron measure $\widehat\P_{\alpha,T}$ is a superposition of Gaussian measures with values in $\R^3$ that are rotationally symmetric and  indexed by  $(\widehat{\xi},{\widehat u})\in  \mathcal{\widehat Y}$, while by Theorem \ref{thm4}, the infinite-volume polaron measure $\widehat\P_\alpha=\lim_{T\to\infty}\widehat\P_{\alpha,T}$ is also a similar superposition of Gaussian measures (recall \eqref{eq-Polaron-limit}). 
The mixing measure corresponding to the Gaussian representation of $\widehat\P_{\alpha,T}$ is ${\widehat \Theta}_{\alpha, T}$ on  $\widehat{\mathcal Y}$, 
while the same for its infinite-volume counterpart being $\widehat\Theta_\alpha$ constructed in Theorem \ref{thm6}. 
Let us first prove the CLT for the distribution $\widehat\nu_{\alpha,T}$ of  $y(T)=\frac{1}{\sqrt{2T}}[x(T)-x(-T)]$ under $\widehat\P_{\alpha,T}$ (by the previous consideration, the CLT for 
for the distribution $\widehat\mu_{\alpha,T}$ of  $y(T)=\frac{1}{\sqrt{2T}}[x(T)-x(-T)]$ under $\widehat\P_{\alpha}$ can be shown exactly in the same manner).

Then $\widehat\nu_{\alpha,T}$ is a rotationally symmetric,  mean zero Gaussian with covariance $Z$ times identity where $Z=Z(\widehat{\xi},{\widehat u})$ is random.  The central   limit theorem for $y(T)$ can be established by proving $\lim_{T\to\infty} \E^{\widehat \Theta_{\alpha, T}}[|Z-c|]=0$ for some constant $c$. The interval  $[-T,T]$
is divided into $2k+1$ intervals, $k+1$ of them dormant and $k$ that are active. All the Gaussian measures in the superposition have independent increments over these intervals. In an active interval $\xi$, the variance of the increment in any component is given by

\begin{equation}\label{unibound}
\begin{aligned}
\sigma^2(\xi,\bar u)&=\frac 1{\Phi(\xi,\bar u)} \,\E^\P\bigg[ \exp\bigg\{-\frac 12 \sum_{i=1}^{n(\xi)} u_i^2\,|\omega(t_i)-\omega(s_i)|^2\bigg\}\,\, \big\langle v, (\omega(\sigma^\star)-\omega(0))\big\rangle^2\bigg] \\
&\leq |\mathcal J(\xi)|=\sigma^\star.
\end{aligned}
\end{equation}
where 
$$\Phi(\xi,\bar u)=\E^\P[\exp\{-\frac 12 \sum_{i=1}^n u_i^2 |\omega(t_i)-\omega(s_i)|^2\}]$$
and in a dormant interval $\xi^\prime$,
$$
\sigma^2(\xi^\prime)=|\xi^\prime|.
$$
Then,
$$
Z=\frac{1}{2T}\bigg[\sum_{i=1}^{k+1}\sigma^2(\xi_i^\prime)+ \sum_{i=1}^k\sigma^2(\xi_i^\prime)\bigg].
$$
The uniform bound  (\ref{unibound}) allows us to replace $[-T,T]$ with a smaller interval $[-T_1,T_1]$ with $T-T_1$ which is  large but $o(T)$ for large $T$. 

We can then  apply Theorem \ref{thm6}  to  replace $\widehat{\Theta}_{\alpha,T}$ with  ${\mathbb Q}_\alpha$ and applying Theorem \ref{thm4},  Theorem \ref{renewal},  
 the ergodic theorem stated in \eqref{ren-lim} with $g(\xi,\bar u,s)=\sigma^2(\xi, \bar u)+s$, and noting that  the variance in a dormant period is $(\lambda+\alpha)^{-1}$, we conclude the result.
\qed

We will now prove Theorem \ref{thm4.5} for which we will need the following estimate. 

\begin{lemma}\label{lemma-2-thm4.5}
Let  $f(A)= \widehat{\mathbb Q}_\alpha\bigg\{[-A,A]\subset \mathcal J(\xi)\colon \xi \,\,\mbox{is a single active period}\bigg\}$, where $\widehat{\mathbb Q}_\alpha$ is the stationary version of the tilted distribution $\widehat\Theta_\alpha$ (recall Theorem \ref{thm6}). 
Then there is $\alpha_0\in (0,\infty)$ such that for $\alpha\in (0,\alpha_0)$, there are constants $c>0$ and $ C<\infty$ such that $f(A)\le Ce^{-cA}$.
\end{lemma}
\begin{proof}
If $-A$ is in a dormant period the required estimate is trivially true. Assume $-A$ is in an active interval $\xi$. Then the distribution of the time to the beginning  of the next active period is the tail probability of  the distribution of the sum of the durations of an active and dormant period. So it has exponential decay since by Theorem \ref{thm2}, $\E^{{\widehat\Pi}_{\alpha}}[ \exp \big\{a\,\,|\mathcal J(\xi)|\big\}]<\infty$ for small enough $\alpha$. 
\end{proof}

\noindent{\bf{Proof of Theorem \ref{thm4.5}.}} 
Let us consider the time between the start of two successive active periods. This is a random variable which has  a distribution with an exponential decay by Lemma \ref{lemma-2-thm4.5}, and 
 starting from $-A$ the probability that no  renewal  takes place before $A$ is at most $Ce^{-2cA}$ for some $c>0, C<\infty$.
Clearly, if there is a renewal, then ${\widehat\P}_\alpha$ on $(-\infty,-A)$ and $(A,\infty)$ are independent.
\qed

\section{Outlook: The strong coupling limit $\alpha\to\infty$, the mean-field Polaron and the Pekar process.}\label{sec-6}

We first remark that, for any fixed $\alpha>0$ and $T>0$, we have the distributional identity 
\begin{equation}\label{alphaeps}
\int_{-T}^T\int_{-T}^T \d s \d t \frac{\alpha \e^{-|t-s|}}{|\omega(s)-\omega(t)|} \stackrel{\mathrm{(d)}}{=} \int_{-\alpha^2 T}^{\alpha^2T}\int_{-\alpha^2T}^{\alpha^2T} \d s \d t \frac{\alpha^{-2} \e^{-\alpha^{-2}|t-s|}}{|\omega(s)-\omega(t)|}.
\end{equation}
If we are interested in the strong-coupling limit $\alpha\to\infty$ of infinite volume measure $\widehat\P_\alpha$, 
a slight reformulation of the Polaron measure has to be considered, which is given by a  Kac-interaction of the form
\begin{equation}\label{6.2}
\widehat{\mathbb P}_{\eps,T}(\d\omega)\stackrel{\ssup{\mathrm{def}}}= \frac 1 {Z_{\eps,T}} \, \exp\bigg\{\int_{-T}^T\int_{-T}^T \, \d s \d t \frac{\eps\e^{-\eps|t-s|}}{|\omega(s)-\omega(t)|}\bigg\} \, \mathbb P(\d\omega).
\end{equation}
Note that given the distributional identity (6.1), the coupling parameter $\alpha$ is related to {\it Kac-parameter} $\eps$ via the relation $\eps=\alpha^{-2}$. Note that in this context, we again have a Poisson point process with intensity $\eps\e^{-\eps (t-s)}$ for $s<t$ and the corresponding law of the birth and death process in one single cluster has birth rate $\eps>0$ and death rate $1$. 

Given the representation \eqref{6.2}, the {\it strong coupling limit} $\alpha\to\infty$ now translates to the {\it Kac limit} $\eps\to 0$ for $\widehat\P_\eps$. To describe this limit, it is useful to go back to the investigation 
of the strong-coupling ground state energy carried out in \cite{DV83}. Recall from \eqref{eq-DV-0} that 
$\lim_{T\to\infty}\frac 1 T\log Z_{\eps,T}=g(\eps)=\sup_{\mathbb Q}\big[\E^{\mathbb Q}\bigg\{\int_0^\infty \frac{\eps\e^{-\eps r} \, \d r}{|\omega(r)-\omega(0)|}\bigg\}- H(\mathbb Q)\big]$
with the supremum being taken over all stationary processes $\mathbb Q$ in $\R^3$. 
Indeed, the value of $g(\eps)$ is not altered if we take the same supremum 
over {\it processes with stationary increments in $\R^3$}, and as before $\lim_{\eps\to 0}\lambda(\eps)=\lim_{\eps\to 0} g(\eps)= g_0$ with $g_0$ defined in \eqref{eq-DV}. 
From a statistical mechanical point of view,  the latter result basically implies that in the strong-coupling regime, at least the partition function $Z_{\eps,T}$ behaves in leading order  like the  partition function of the so-called {\it{mean-field Polaron}}.
More precisely, with the Pekar variational formula defined in \eqref{eq-DV}, we have 
$$
g_0=\lim_{\eps\to 0}\lim_{\lim_{T\to\infty}} \frac 1 T \log Z_{\eps,T}=\lim_{T\to\infty} \frac 1 T\log Z_T^{\ssup{\mathrm{mf}}} 
$$
where $Z_T^{\ssup{\mathrm{mf}}}$ is the partition function for the {\it{mean-field Polaron measure}}
$$
\widehat\P_{T}^{\ssup{\mathrm{mf}}}(\d \omega)= \frac 1 {Z_{T}^{\ssup{\mathrm{mf}}}} \exp\bigg\{\frac{1}T \int_{0}^T\int_0^T  \frac{\d t  \d s}{|\omega_t- \omega_s|}\bigg\} \,\, \P_0(\d \omega),
$$
with $\P_0$ denoting the law of Brownian motion starting the origin in $\R^3$. That is, the so-called ``mean-field approximation" for the Polaron in strong coupling is valid, at least for the (leading term) of the partition function. It is natural to wonder if such approximation continues to remain valid also on the level of the actual path measures when the coupling is large. 

The limiting behavior of the mean-field measures $\widehat\P_T^{\ssup{\mathrm{mf}}}$ as $T\to\infty$ have been  fully analyzed
recently in a series of results (\cite{MV14, KM15, BKM15}), where it is shown that the distribution 
$\widehat\P_T^{\ssup{\mathrm{mf}}}\, L_T^{-1}$ of the Brownian occupation measures $L_T=\frac 1 T\int_0^T\delta_{W_s} \,\d s $ under $\widehat\P_T{\ssup{\mathrm{mf}}}$ converges to the distribution 
of a random translation $[\psi_0^2\star \delta_X]\,\d z$ of $\psi_0^2 \, \d z$, with the random shift $X$ having a density $\psi_0 /\int \psi_0$. 
 Furthermore, it was also shown in \cite{BKM15} that the mean-field measures $\widehat\P_T{\ssup{\mathrm{mf}}}$ themselves converge, as $T\to\infty$ towards
 a spatially inhomogeneous mixture of the stationary process driven by the SDE 
$\d X_t= \d W_t+ (\frac{\nabla \psi_0}{\psi_0})(W_t)\,\d t$
 with the spatial mixture being taken w.r.t. the weight $\psi_0/\int \psi_0$. This result consequently led to a rigorous construction of the {\it{Pekar process}}, a stationary diffusion 
 process with generator $\frac 12 \Delta+ (\nabla \psi_x/\psi_x)\cdot\nabla$ with $\psi_x^2=\psi_0^2\star \delta_x$, whose heuristic definition was 
 set forth by Spohn in \cite{Sp87}. Note that, while the Pekar process is not uniquely defined, its increment process is uniquely determined by {\it any maximizer} $\psi$ of the variational formula $g_0$. if $\widehat{\mathbb Q}_\psi$ denotes the stationary version of the increments of the Pekar process, recently we have shown (\cite{MV18})
 $$
 \lim_{\eps\to 0} \widehat\P_\eps= \widehat{\mathbb Q}_\psi
 $$
 justifying the ``mean-field approximation" of the strong coupling Polaron even for path measures, which was also conjectured by Spohn in \cite{Sp87}.

\section{Appendix}\label{sec-appendix}

In this appendix we collect some estimates w.r.t. birth and death processes. 
Recall that $\Pi_\alpha$ denotes the law of the birth-death process on a single cluster starting with one individual, with birth rate $\alpha>0$ and death rate $1$, while $\mu_\alpha$ is the exponential distribution on any gap with parameter $\alpha$.

\begin{theorem}\label{thm2.6old}
There exists $\alpha_0\in (0,\infty)$ such that if $\alpha\in(0,\alpha_0)$ then for some $\lambda=\lambda(\alpha)\in (0,\infty)$, 
\begin{equation}\label{eq-thm2.6}
\begin{aligned}
&q_\lambda(\alpha)=\E^{\Pi_\alpha\otimes\mu_\alpha}\bigg[\e^{-\lambda(\alpha)[|\mathcal J(\xi)+ |\xi^\prime|]}\,\, \mathbf F(\xi)\bigg]=1,
\\
&L_\lambda(\alpha)=\E^{\Pi_\alpha\otimes\mu_\alpha}\bigg[ \big[|\mathcal J(\xi)|+|\xi^\prime|\big]\,\,\e^{-\lambda(\alpha)[|\mathcal J(\xi)+ |\xi^\prime|]}\,\, \mathbf F(\xi)\bigg]<\infty. 
\end{aligned}
\end{equation}
for any active period $\xi$ and dormant period $\xi^\prime$.
\end{theorem}

Recall  that in Theorem \ref{thm2.6} the above statement was shown to be true for any $\alpha>0$ and for $\lambda=\lambda(\alpha)=g(\alpha)-\alpha$. 
We will now give a different proof of Theorem \ref{thm2.6old} which are based on explicit estimates on the underlying determinant. These estimates are sufficient
 to conclude the above statement when $\alpha>0$ is sufficiently small, but do not seem to be strong enough for our purposes if $\alpha>0$ is large. 
 First we will need

 \begin{theorem}\label{thm2}
There exists $\alpha_0\in (0,\infty)$ so that for $\alpha\in(0,\alpha_0)$, 
\begin{equation}\label{eq-1-thm2}
\begin{aligned}
& \E^{\Pi_\alpha}\big[\mathbf F(\xi)\big] <\infty, \qquad
\E^{\Pi_\alpha}\big[|\mathcal J(\xi)|\,\,\mathbf F(\xi)\big]  <\infty, \qquad\mbox{and  } \\
&\E^{\Pi_\alpha}\big[\e^{a|\mathcal J(\xi)|}\,\mathbf F(\xi)\big]  <\infty, \qquad\mbox{for some }a>0.
\end{aligned}
\end{equation}
   
\end{theorem}

\begin{remark}
Theorem \ref{thm2} is not true if $\alpha$ is large: In Lemma \ref{alpha-large} it is shown that there is $\alpha^\star$ such that if $\alpha>\alpha^\star$, then $\E^{\Pi_\alpha}\big[\mathbf F(\xi)\big]=\infty$.
\end{remark} 
Assuming Theorem \ref{thm2} let us first conclude 

{\bf Proof of Theorem \ref{thm2.6old}:} By Theorem \ref{thm2}, for $\alpha<\alpha_0$, $q_0(\alpha)=\E^{\Pi_\alpha\otimes\mu_\alpha} [\mathbf F(\xi)]<\infty$, while by Lemma \ref{lemma-sqrt2}, $q_0(\alpha)\geq \sqrt 2\geq 1$. 
The function $\lambda\mapsto q_\lambda $ is continuous and  monotone decreasing in $\lambda\in (0,\infty)$ and $q_\lambda\downarrow 0$ as $\lambda\to\infty$. Then we can find $\lambda=\lambda(\alpha)$ such that 
$q_\lambda(\alpha)=1$. The finiteness of $L_\lambda$ follows from the observation that $q_\lambda<\infty$ for a slightly lower value of  $\lambda$.
\qed

We now owe the reader the proof of Theorem \ref{thm2} which is carried out in few steps. The first step is to prove the following upper bound on the total mass $\Phi(\xi, \bar u)$.

\begin{lemma}\label{lemma-2-pf-thm2}
We have an upper bound 
$$
\Phi(\xi,\bar u) \leq \prod_{i=1}^{n(\xi)} (1+ u_i^2 \delta_i)^{-3/2}.
$$
where $\delta_i$ is defined in \eqref{eq-delta-def}. \end{lemma}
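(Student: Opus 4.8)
The plan is to unfold $\Phi(\xi,\bar u)$ into an $n$-fold iterated Gaussian integral, peeling the individuals off one at a time in decreasing order of their death times $t_1<t_2<\cdots<t_n=\sigma^\star$, and to control each layer by the elementary identity: for a centered Gaussian vector $B$ in $\R^3$ with covariance $\delta I$, any fixed $c\in\R^3$ and any $u>0$,
\[
\E\Big[\exp\big\{-\tfrac12 u^2|B+c|^2\big\}\Big]=(1+u^2\delta)^{-3/2}\,\exp\Big\{-\frac{u^2|c|^2}{2(1+u^2\delta)}\Big\}\le (1+u^2\delta)^{-3/2},
\]
which is a one-line completion of the square and factorizes over the three coordinates (the exponent $-3/2$ being exactly the space dimension).

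The structural input I would use is the following decomposition. Set $a_i:=\sigma_{r_i-1}$, so that $t_i=\sigma_{r_i}$ and $\delta_i=t_i-a_i$ is the length of the holding interval $(a_i,t_i]$ immediately preceding the death $t_i$, and write
\[
\omega(t_i)-\omega(s_i)=B_i+C_i,\qquad B_i:=\omega(t_i)-\omega(a_i),\quad C_i:=\omega(a_i)-\omega(s_i).
\]
Denote by $\mathcal F_t$ the $\sigma$-field generated by the increments of $\omega$ over $(-\infty,t]$. Because $t_1<\cdots<t_n$ forces $r_1<\cdots<r_n$, one has $t_j=\sigma_{r_j}\le\sigma_{r_i-1}=a_i$ for all $j<i$; and because each birth time $s_j$ equals $\sigma_m$ for some $m<r_j$ (or equals $0$), one has $s_j\le a_j$ for all $j$. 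Hence: (i) $B_i$ is a Brownian increment over $(a_i,t_i]$, so it is independent of $\mathcal F_{a_i}$ and $\sim N(0,\delta_i I)$; and (ii) for every $j<i$, each of $B_j$, $C_j$ and also $C_i$ is $\mathcal F_{a_i}$-measurable, since all the increments defining them lie over subintervals of $(-\infty,a_i]$.

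Granting (i) and (ii), the bound follows by a finite downward induction. Conditioning on $\mathcal F_{a_n}$ and using that $\prod_{i=1}^{n-1}\exp\{-\frac12 u_i^2|B_i+C_i|^2\}$ and $C_n$ are $\mathcal F_{a_n}$-measurable while $B_n\sim N(0,\delta_n I)$ is independent of $\mathcal F_{a_n}$, the Gaussian estimate above gives
\[
\Phi(\xi,\bar u)=\E\Big[\prod_{i=1}^n \e^{-\frac12 u_i^2|B_i+C_i|^2}\Big]\le (1+u_n^2\delta_n)^{-3/2}\;\E\Big[\prod_{i=1}^{n-1}\e^{-\frac12 u_i^2|B_i+C_i|^2}\Big].
\]
Iterating with $\mathcal F_{a_{n-1}},\mathcal F_{a_{n-2}},\dots,\mathcal F_{a_1}$ in turn produces $\Phi(\xi,\bar u)\le\prod_{i=1}^n(1+u_i^2\delta_i)^{-3/2}$, as claimed.

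The one point that needs care is the measurability bookkeeping behind (ii): stripping off the last death $t_n$ must leave the remaining factors intact, i.e.\ every $B_j,C_j$ with $j<n$ must be built from increments lying to the left of $a_n$. This is precisely what $t_j\le a_n$ $(j<n)$ and $s_j\le a_j$ ensure, and it is here --- and only here --- that one invokes the ordering of the death times together with the fact that births and deaths of the underlying birth--death process occur only at its jump times $\sigma_m$. The rest is the single Gaussian identity applied $n$ times.
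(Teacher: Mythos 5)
Your argument is correct: the measurability bookkeeping checks out (for $j<i$ one has $t_j=\sigma_{r_j}\le\sigma_{r_i-1}=a_i$ and $s_i\le a_i$ since births occur at jump times strictly before $t_i=\sigma_{r_i}$), the increment $B_i$ over $(a_i,t_i]$ is indeed independent of $\mathcal F_{a_i}$ with law $N(0,\delta_i I)$ under $\P$, and the one-dimensional Gaussian identity applied coordinatewise gives exactly the factor $(1+u_i^2\delta_i)^{-3/2}$ at each peeling step. Note that since $\xi$ is fixed in $\Phi(\xi,\bar u)$, all the times $a_i,t_i$ are deterministic, so no optional-stopping subtleties arise.

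Your route is genuinely different from the paper's, though it rests on the same structural observation, namely that the holding interval $U_{r_i}=[\sigma_{r_i-1},\sigma_{r_i}]$ of length $\delta_i$ just before the death $t_i$ carries a Brownian increment that is fresh relative to everything attached to earlier deaths. The paper first writes $\Phi(\xi,\bar u)=[\det(I+C)]^{-3/2}$ with $C_{ik}=u_iu_k|J_i\cap J_k|$, then proves a general determinant inequality (Lemma \ref{lemma-1-pf-thm2}): $\det(I+M)\ge\prod_i(1+\gamma_i^2)$ with $\gamma_i^2$ the successive conditional variances, implemented by adjoining independent standard Gaussians $\eta_i$, writing $\det(I+C)=\prod_i q_i$ as a product of prediction-error variances of $\zeta_i=u_i\theta_i+\eta_i$, and lower-bounding each $q_i$ by conditioning on the finer increments $Z_r$ over the subintervals $U_r$, which yields $q_i\ge 1+u_i^2\delta_i$. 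Your proof bypasses the determinant calculus entirely: iterated conditioning on $\mathcal F_{a_i}$ plus the explicit one-dimensional identity does the whole job, is more elementary, in fact retains the (discarded) extra factor $\exp\{-u_i^2|C_i|^2/2(1+u_i^2\delta_i)\}$, and generalizes verbatim to dimension $d$ with exponent $-d/2$. What the paper's route buys is the reusable linear-algebraic statement of Lemma \ref{lemma-1-pf-thm2} and the determinant representation of $\Phi$, which is also the natural companion to the complementary upper bound $\det C\le\prod_i C_{ii}$ used later in Lemma \ref{lemma-sqrt2} to bound $\Phi$ from below.
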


The proof of the Lemma \ref{lemma-2-pf-thm2} depends on the following estimate.
\begin{lemma}\label{lemma-1-pf-thm2}
Let $M=(m_{ij})$ be any symmetric positive definite $n\times n$ matrix with real-valued entries. Then 
\begin{equation}\label{eq2-lemma1-pf-thm2}
\mathrm{Det}(I+M)\geq \prod_{i=1}^n (1+\gamma_i^2)
\end{equation}
where
$$
\gamma_i^2=\E\bigg[ \big(X_i -\E(X_i| X_1,\dots, X_{i-1})\big)^2\bigg]
$$
and $(X_1,\dots,X_n)$ is a mean zero Gaussian vector $(X_1,\dots,X_n)$ with $m_{ij}= \E(X_i X_j)$.
\end{lemma}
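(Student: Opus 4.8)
The plan is to identify the numbers $\gamma_i^2$ as the successive conditional variances arising in the Gaussian Cholesky ($LDL^{\mathsf T}$) factorization, and then to compare the two determinants by adding an \emph{independent} Gaussian perturbation. The basic tool is the elementary identity that, for any mean-zero Gaussian vector $(Y_1,\dots,Y_n)$ with covariance matrix $\Sigma$,
$$
\mathrm{Det}(\Sigma)=\prod_{i=1}^n \Var\big(Y_i\mid Y_1,\dots,Y_{i-1}\big),
$$
since the $i$-th diagonal entry of $D$ in $\Sigma=LDL^{\mathsf T}$ (with $L$ unipotent lower triangular) is exactly this conditional variance. Applied to $(X_1,\dots,X_n)$ this records $\mathrm{Det}(M)=\prod_i\gamma_i^2$, but the point is to use the identity for $I+M$.

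Next I would introduce a standard Gaussian vector $(Z_1,\dots,Z_n)$, independent of $(X_1,\dots,X_n)$, and set $Y_i=X_i+Z_i$, so that $(Y_1,\dots,Y_n)$ is mean-zero Gaussian with covariance $I+M$. The identity above then gives
$$
\mathrm{Det}(I+M)=\prod_{i=1}^n \Var\big(Y_i\mid Y_1,\dots,Y_{i-1}\big),
$$
and it suffices to prove the termwise bound $\Var\big(Y_i\mid Y_1,\dots,Y_{i-1}\big)\ge 1+\gamma_i^2$ for each $i$.

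For the termwise bound I would argue as follows. The conditional variance $\Var(Y_i\mid Y_1,\dots,Y_{i-1})$ is the minimum of $\E[(Y_i-\ell)^2]$ over all linear functionals $\ell$ of $Y_1,\dots,Y_{i-1}$; since each such $\ell$ is also a linear functional of $X_1,\dots,X_{i-1},Z_1,\dots,Z_{i-1}$ (because $Y_j=X_j+Z_j$), minimizing over the larger family can only decrease the value, so
$$
\Var\big(Y_i\mid Y_1,\dots,Y_{i-1}\big)\ \ge\ \Var\big(X_i+Z_i\mid X_1,\dots,X_{i-1},Z_1,\dots,Z_{i-1}\big).
$$
Finally, $Z$ is independent of $X$ and $Z_i$ is independent of $(X_1,\dots,X_i,Z_1,\dots,Z_{i-1})$, so the right-hand side splits as $\Var\big(X_i\mid X_1,\dots,X_{i-1}\big)+\Var(Z_i)=\gamma_i^2+1$. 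Multiplying the $n$ inequalities yields $\mathrm{Det}(I+M)\ge\prod_{i=1}^n(1+\gamma_i^2)$, which is \eqref{eq2-lemma1-pf-thm2}.

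I do not expect a genuine obstacle here: the only idea is the additive Gaussian coupling, which turns $\mathrm{Det}(I+M)$ into a product of conditional variances that can be compared one coordinate at a time with those of $M$. I would also point out that softer tools are insufficient — e.g. Minkowski's determinant inequality only gives $\mathrm{Det}(I+M)^{1/n}\ge 1+\mathrm{Det}(M)^{1/n}$, which is strictly weaker than the product bound $\prod_i(1+\gamma_i^2)$ we need — so the termwise (Cholesky) comparison is the right route.
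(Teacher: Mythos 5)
Your proposal is correct and follows essentially the same route as the paper: both use the Gaussian identity $\mathrm{Det}(\Sigma)=\prod_i\Var(Y_i\mid Y_1,\dots,Y_{i-1})$, add an independent standard Gaussian vector to realize $I+M$ as a covariance, and then bound each conditional variance from below by conditioning on the finer $\sigma$-field generated by the $X$'s and the added Gaussians, yielding $\gamma_i^2+1$ termwise. The only differences are notational and your explicit mention of the $LDL^{\mathsf T}$ factorization, which the paper simply cites as well known.
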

\begin{proof}
Since any symmetric positive definite matrix $M$ is the covariance matrix of a mean zero Gaussian vector $(X_1,\dots,X_n)$, it is well-known that
\begin{equation}\label{eq1-lemma1-pf-thm2}
\mathrm{Det}(M)= \prod_{i=1}^n \gamma_i^2=\prod_{i=1}^n \E\bigg[ \big(X_i -\E(X_i| X_1,\dots, X_{i-1})\big)^2\bigg]
\end{equation}
The above identity will imply the desired the lower bound \eqref{eq2-lemma1-pf-thm2} as follows. Let $Y_1,\dots, Y_n$ be an independent set of standard Gaussian random variables which are also
independent of $\{X_1,\dots,X_n\}$. Let us denote by $Z_i=X_i+Y_i$. Then, by the identity we just proved above, 
$$
\det(I+M)= \prod_{i=1}^n \beta_i^2 
$$
where, for any $i=1,\dots,n$
$$
\begin{aligned}
\beta_i^2 &= \E\big[ \big(Z_i -\E(Z_i| Z_1,\dots, Z_{i-1})\big)^2\big] \\
&\geq \E\big[ \big(Z_i -\E(Z_i| X_1,Y_1,\dots,X_{i-1},Y_{i-1})\big)^2\big] \\
&=\E\big[ \big(X_i -\E(X_i| X_1,\dots, X_{i-1})\big)^2\big]+ \E\big[ Y_i^2\big] \\
&=\gamma_i^2+1.
\end{aligned}
$$

This concludes the proof of the lower bound \eqref{eq2-lemma1-pf-thm2} and that of Lemma \ref{lemma-1-pf-thm2}.
\end{proof}

We will now complete the proof of Lemma \ref{lemma-2-pf-thm2}.

\noindent{\bf{Proof of Lemma \ref{lemma-2-pf-thm2}}.}
Recall that 
$$
\Phi(\xi, \bar u)=\E^{\P}\bigg[\exp\bigg\{-\frac{1}{2}\sum_{i=1}^{n(\xi)}  u_i^2\big|\omega(t_i)-\omega(s_i)\big|^2\bigg\}\bigg]
$$

Let 
$$
X_i=u_i\big(\omega(t_i)-\omega(s_i)\big) \qquad i=1,\dots, n(\xi)
$$
denote the rescaled one-dimensional increments. Then $\{X_i\}_{i=1}^{n(\xi)}$ is a mean $0$ Gaussian vector with covariance matrix $C=(C_{ik})$ given by 
 $$
 \E^\P[X_iX_k]= C_{i,k} =u_iu_k |J_i\cap J_k|.
 $$
The  expectation is given by
$$
\E^{\P}\bigg[\exp\bigg\{-\frac{1}{2}\sum_{i=1}^n X_i^2\bigg\}\bigg] = [\det(C^{-1})]^{1/2}\,\, [\det(I+C^{-1})]^{-1/2}= [\det(I+C)]^{-1/2}
$$
and the total mass is given by
$$
\Phi(\xi,\bar u)= [\det(I+C)]^{-3/2}.
$$
For notational convenience, we will write $J_i=[s_i,t_i]$ so that $\mathcal J(\xi)=[0,\sigma^\star]=\cup_{i=1}^{n(\xi)} J_i$. 
We have    
$$
0=\sigma_0<\cdots<\sigma_{2n(\xi)-1}=\sigma^\ast
$$
 that divides the interval $[0,\sigma^\ast]$ into $(2n(\xi)-1)$
intervals $U_r=[\sigma_{r-1},\sigma_r]$, $1\le r\le 2n(\xi)-1$. Each $J_i$ is the union of a set of $U_r$. We label $\{J_i\}$ so that $t_1<t_2<\cdots<t_n=\sigma^\ast$ and similarly order the disjoint intervals  $U_r$.   

{ Let $\theta_i$ be the increment $\omega(t_i)-\omega(s_i)$ over $J_i$,  while $\{\eta_i\}$ are independent Gaussians with mean $0$ and variance $1$.} We set
$$
\zeta_i=u_i\theta_i+\eta_i.
$$
 Let us fix an $i$ and with $t_i=\sigma_{r(i)}$, $U_r$ is the interval $[\sigma_{r(i)-1},\sigma_{r(i)}]$.    If
\begin{equation}
q_i=\inf_{a_1,\ldots,a_{i-1}} \E\bigg[ \big(\zeta_i-(a_1\zeta_1+\cdots+a_{i-1}\zeta_{i-1})\big)^2\bigg]
\end{equation}
where the expectation is with respect to both the Brownian increments $\{\theta_i\}$ and $\{\eta_i\}$, 
then, as in the proof of Lemma \ref{lemma-1-pf-thm2}, 
$$
\det(I+C)= \prod_{i=1}^{n(\xi)} q_i
$$
Now we will get a lower bound on $q_i$. With  $Z_r$ being  the increment of $\omega(\sigma_r)-\omega(\sigma_{r-1})$ over $U_r$, we again use Lemma \ref{lemma-1-pf-thm2} to obtain
$$
\begin{aligned}
q_i&\ge \inf_{b_1,\ldots,b_{r(i)-1}\atop k_1,\ldots, k_{i-1}} \E\bigg[ \bigg\{u_i\theta_i+\eta_i-(b_1Z_1+\cdots+b_{r(i)-1}Z_{r(i)-1}+k_1\eta_1+\cdots +k_{i-1}\eta_{i-1})\bigg\}^2\bigg]\\
&=(1+u_i^2\delta_i)
\end{aligned}
$$
The proof of Lemma \ref{lemma-2-pf-thm2} is therefore finished.
\qed
 
 We will now prove Theorem \ref{thm2}.
 
 \noindent{\bf{Proof of Theorem \ref{thm2}.}} From the upper bound coming from Lemma \ref{lemma-2-pf-thm2}, it follows that, 
 \begin{equation}\label{eq-claim0-thm2}
\mathbf F(\xi)=  \int_{(0,\infty)^{n(\xi)}}  \bigg(\sqrt{\frac 2\pi}\bigg)^{n(\xi)}\Phi(\xi,\bar u)\ \Pi du_i  \leq \prod_{i=1}^{n(\xi)} \bigg(\sqrt{\frac 2\pi} \,\frac 1 {\sqrt\delta_i}\bigg),
 \end{equation}
 and it suffices to estimate 
 $$
 M:= \E^{\Pi_\alpha}\bigg[\prod_{i=1}^{n(\xi)}\bigg(\sqrt{\frac 2\pi} \,\frac 1 {\sqrt\delta_i}\bigg) \bigg].
 $$
 We condition with respect to the Markov chain of  successive population sizes  $\{X_r\}$, $ 0\le r\le \ell$. $X_0=1, X_\ell=0$, and as remarked before, conditionally  $\{\delta_i\}$ are  independent and  exponentially distributed with rate $n_r+\alpha$, if $\delta_i=\sigma_{r+1}-\sigma_r$. Then 
 \begin{equation}\label{eq-claim0.5-thm2}
  \E^{\Pi_\alpha}\bigg[\bigg(\sqrt{\frac 2\pi} \,\frac 1 {\sqrt\delta_i}\bigg)\bigg| X_r=n_r\bigg]= \sqrt 2\, \sqrt{n_r+\alpha},
 \end{equation}
yielding
  $$
 M\leq \E^{\Pi_\alpha}\bigg[\e^{\sum_{r=1}^{\ell} V(X_r)}\bigg]
 $$ 
 where 
 $$
 V(n)= c_1+ \frac 12 \log(n+\alpha)
 $$
 for a suitable choice of $c_1>0$. For our purposes, it suffices to show that, for any constant $c_1$, there exists $\alpha_0>0$ such that 
\begin{equation}\label{eq-claim-1-thm2}
 \E^{\Pi_\alpha}\bigg[\e^{\sum_{r=0}^{\ell-1} V(X_r)}\bigg| X_0=1\bigg] <\infty,
 \end{equation}
 for $\alpha<\alpha_0$. Consider the function $u(n)=C^n (n!)^{\frac12}$ for some some $C>0$. 
 Since the transition probabilities of the Markov chain $(X_r)$ are given by
 $\pi_{n,n+1}=\frac{\alpha}{n+\alpha}$ and $\pi_{n,n-1}=\frac{n}{n+\alpha}$, with $(\Pi u)(n)=\sum \pi(n,n^\prime) u(n^\prime)$, we have for $n\ge 1$
 $$
 \frac{(\Pi u)(n)}{u(n)}=\frac{\alpha}{\alpha+n}[C\sqrt {n+1}]+\frac{n}{\alpha+n}\frac {1}{C\sqrt{n}}\le \frac{2\alpha C}{\sqrt {n+\alpha}}+\frac{1}{C\sqrt{n+\alpha}}
 $$
 and
\begin{equation}\label{eq-claim-2-thm2}
\log \bigg(\frac{u(n)}{(\Pi u)(n)}\bigg)\ge \frac{1}{2}\log (n+\alpha)-\log(2\alpha C+C^{-1})\ge  -\frac{1}{2}\log\alpha-\log 3+\frac{1}{2}\log (n+\alpha)
\end{equation}
if we choose $C=\frac{1}{\sqrt{\alpha}}$.  If we choose $\alpha<\alpha_0(c_1)$ we can have 
$$
\log \bigg(\frac{u(n)}{(\Pi u)(n)}\bigg)\ge c_1+V(n)
$$

Let us denote by $W=\log(u/\Pi u)$. Then if $\mathbb Q^{\ssup x}$ is the law of a Markov chain
$(X_j)_{j\geq 0}$  starting at $x$, then  it follows from successive conditioning and the Markov property that
$$
\E^{\mathbb Q^{\ssup x}} \bigg[\exp\bigg\{\sum_{j=0}^{\ell-1} W(X_j)\bigg)\bigg\} \, (\Pi u)(X_{\ell-1})\bigg] =u(x), 
$$
implying that
$$
\E^{\mathbb Q^{\ssup x}} \bigg[\exp\bigg\{\sum_{j=0}^{\ell-1} W(X_j)\bigg)\bigg\}\bigg] \leq \frac {u(x)}{\inf_{y} u(y)}.
$$

Since for our choice, $u(0)=1$ and $u(1)=C$, the above estimate, combined with \eqref{eq-claim-2-thm2} implies the claim \eqref{eq-claim-1-thm2}.  

We can also obtain
$$
\E^{\Pi_\alpha}\big[|\mathcal J(\xi)|\mathbf F(\xi)\big] \le \E^{\Pi_\alpha}\bigg[\frac{n(\xi) (\sqrt{2/\pi})^{n(\xi)} }{\sqrt{\delta_1\cdots\delta_n}}\bigg]\le C^\prime
$$
by increasing the value of $c_1$  in \eqref{eq-claim-2-thm2} which  will  then yield, for some $a>0$,   bounds on the exponential moments 
$$
\frac{1}{q_\alpha(0)}\E^{\Pi_\alpha}\bigg [ \e^{an(\xi)} \mathbf F(\xi)\bigg]\le C^{\prime\prime}, 
$$
as well as
\begin{equation}\label{eq-length-expmom}
\frac{1}{ q_\alpha(0)}\E^{\Pi_\alpha}\bigg [ \e^{a|\mathcal J(\xi)|} \mathbf F(\xi) \bigg]\le C^{\prime\prime}.
\end{equation}
This concludes the proof of Theorem \ref{thm2}.\qed

 \begin{remark}\label{rmk-thm2}
Using \eqref{eq-claim0-thm2}, the function  
$$
\mathbf F(\xi)= \bigg(\sqrt{\frac 2\pi}\bigg)^{n(\xi)} \int_{(0,\infty)^{n(\xi)}} \d \bar u\,\,\Phi(\xi,\bar u)
$$
is easily seen to be dominated by the function 
\begin{equation}\label{eq-Psi}
\widehat{\mathbf F}(\xi)= (c_2)^{n(\xi)}\prod_{i=1}^{n(\xi)}\bigg(1+\frac{1}{\sqrt {\delta_i}}\bigg)
\end{equation}
for some $c_2>1$, and one can verify by increasing $c_1$ in \eqref{eq-claim-2-thm2}, that there is  a new $\alpha_0$ such that  for $\alpha<\alpha_0$
\begin{equation}
\E^{\Pi_\alpha}[\widehat{\mathbf F}(\xi)]<\infty.
\end{equation}
It is worth noting that the function $\widehat{\mathbf F}(\xi)$ is monotone in the sense that if $\xi^\prime\supset \xi$ then $\widehat{\mathbf F}(\xi^\prime)\ge \widehat{\mathbf F}(\xi)$.
\end{remark}

\medskip

The following lemma shows that if $\alpha>0$ is sufficiently large, then the statement of Theorem \ref{thm2} no longer holds:

\begin{lemma}\label{alpha-large}
 There exists $\alpha^\star<\infty$ such that for $\alpha>\alpha^\star$, 
 $\E^{\Pi_\alpha}[\mathbf F(\xi)]=\infty$. 
 \end{lemma}
 \begin{proof}
 We note that $n(\xi)$ is the total number of births including the one at the start. If any one individual has a life time that is more than $N$, then the active period is at least of duration $N$, and   with birth rate of $\alpha$, the total number of births would be  at least about $\alpha N$. Hence
$$
\Pi_\alpha\big[n\simeq \alpha N\big]\ge \e^{-N}.
$$
which, combined with \eqref{eq-lemma-sqrt2} implies that $\E^{\Pi_\alpha}[\mathbf F(\xi)]=\infty$ if $\alpha$ is large enough.
\end{proof}

Finally, we end with the following result which shows {\it finiteness} of $q_\alpha$ when $\alpha>0$ is small and $\lambda$ is arbitrary:

\begin{theorem}\label{thm2.5}
There exists $\alpha_0\in (0,\infty)$ such that for any $\lambda>0$, 
$$
\begin{aligned}
&\E^{\Pi_\alpha\otimes\mu_\alpha}\bigg[\e^{-\lambda[|\mathcal J(\xi)|+ |\xi^\prime|]}\,\, \mathbf F(\xi)\bigg]<\infty\qquad\mbox{and}, 
\\
&\E^{\Pi_\alpha\otimes\mu_\alpha}\bigg[ \big[|\mathcal J(\xi)|+|\xi^\prime|\big]\,\,\e^{-\lambda[|\mathcal J(\xi)+ |\xi^\prime|]}\,\, \mathbf F(\xi)\bigg]<\infty. 
\end{aligned}
$$
where $|\mathcal J(\xi)|=\sigma^\star$ denotes the total duration of an active period $\xi$.
\end{theorem}

\begin{proof}
Note that, for any $\lambda>0$, and dormant period $\xi^\prime$, since
$$
\E^{\mu_\alpha}\bigg[\e^{-\lambda|\xi^\prime|}\bigg]=\frac \alpha{\alpha+\lambda},
$$
by our previous estimates (recall \eqref{eq-claim0-thm2}) we only need to check that
$$
\frac \alpha{\alpha+\lambda} \E^{\Pi_\alpha}\bigg[\prod_{i=1}^{n(\xi)} \bigg(\e^{-\lambda\delta_i} \,\, \sqrt{\frac 2 \pi} \,\,\frac 1 {\delta_i}\bigg)\bigg]<\infty.
$$
As before (recall \eqref{eq-claim0.5-thm2}), 
$$
\E^{\Pi_\alpha}\bigg[\e^{-\lambda\delta_i} \,\, \sqrt{\frac 2 \pi} \,\,\frac 1 {\delta_i}\bigg | X_r=n_r\bigg] = \sqrt 2 \,\,\frac{n_r+\alpha}{\sqrt{n_r+\alpha+ \lambda(\alpha)}},
$$
and we need to find a function $u$ such that 
\begin{equation}\label{claim-thm2.5}
\begin{aligned}
& \frac{n+\alpha}{\sqrt{n+\alpha+ \lambda}} \bigg[\frac{\Pi u(n)} {u(n)}\bigg] \\
&= \frac{1}{\sqrt{n+\alpha+ \lambda}} \bigg[ \alpha \frac{u(n+1)}{u(n)}+ n \frac{u(n-1)}{u(n)} \bigg]
\leq \frac 1 {\sqrt 2}.
\end{aligned}
\end{equation}
We can again choose $u(n)= c_2^n \,\, (n!)^{1/2}$, for some $c_2=c_2(\alpha)$ so that the left hand side in the last display can be estimated from above by 
$2\alpha c_2 + \frac 1 {c_2}$. If we now set $c_2=\frac 1 {\sqrt\alpha}$, and choose $\alpha\in (0,\alpha_0)$ small enough then we have \eqref{claim-thm2.5}. 
\end{proof}

        





 \ack The first author would like to thank Erwin Bolthausen, Wolfgang Koenig  and Herbert Spohn  for many inspiring and helpful discussions
 on the Polaron problem. Both authors would like to thank Erwin Bolthausen and Amir Dembo for pointing out an error in Theorem \ref{thm4} in an earlier version of the article. 
 Also Theorem \ref{thm2.6} was earlier stated for $\alpha\in (0,\alpha_0) \cup (\alpha_1,\infty)$, while its proof for $\alpha\in (\alpha_1,\infty)$ in the earlier version 
 had a gap that was pointed out by Volker Betz and Steffen Polzer who we sincerely thank.


\frenchspacing
\bibliographystyle{cpam}

\end{document}